\newtheorem{theorem}{Theorem}[section]
\newtheorem{lemma}[theorem]{Lemma}
\newtheorem{corollary}[theorem]{Corollary}
\newtheorem{proposition}[theorem]{Proposition}
\newtheorem{definition}[theorem]{Definition}
\theoremstyle{remark}
\newtheorem{remark}[theorem]{Remark}
\newtheorem*{remark*}{Remark}
\theoremstyle{definition}
\def\bR{\mathbb{R}}
\def\bC{\mathbb{C}}
\newcommand\cS{\mathcal{S}}
\def\Xint#1{\mathchoice
{\XXint\displaystyle\textstyle{#1}}%
{\XXint\textstyle\scriptstyle{#1}}%
{\XXint\scriptstyle\scriptscriptstyle{#1}}%
{\XXint\scriptscriptstyle\scriptscriptstyle{#1}}%
\!\int}
\def\XXint#1#2#3{{\setbox0=\hbox{$#1{#2#3}{\int}$}
\vcenter{\hbox{$#2#3$}}\kern-.5\wd0}}
\def\dashint{\Xint-}
\newcommand{\fint}{\dashint}
\numberwithin{equation}{section}
\begin{document}
	
\title[The Fefferman-Phong uncertainty principle]{The Fefferman-Phong uncertainty principle for representations of Lie groups and applications}
\author{Fabio Nicola}
\address{Dipartimento di Scienze Matematiche, Politecnico di Torino, Corso Duca degli Abruzzi 24, 10129 Torino, Italy.}
\email{fabio.nicola@polito.it}

\subjclass[2010]{22E30, 35J10, 47B35, 58J50}
\keywords{Uncertainty principle, Lie group, representation, anti-Wick operator, spectrum, essential spectrum, semiclassical analysis}

\begin{abstract}
The Fefferman-Phong uncertainty principle is a deep result concerning the order of magnitude of the bottom of the spectrum of second order pseudodifferential operators with nonnegative (Weyl) symbol. We show that a similar uncertainty principle holds for discrete series representations of connected Lie groups, where the concentration of the matrix coefficients is measured in terms of weighted $L^p$ norms, with weights in the local Muckenhoupt class $A_{\infty,{\rm loc}}$ associated with a subRiemannian left-invariant metric and a relatively invariant measure. The proof relies on a certain connection with lower bounds for left-invariant subLaplacians.

As a consequence of this result (in the case of the Schr\"odinger representation of the reduced Heisenberg group), we provide an {\it explicit formula} for the order of magnitude of the bottom of the spectrum and also of the essential spectrum of semiclassical anti-Wick operators with nonnegative symbols of arbitrary order, hence providing the analog, for the anti-Wick quantization, of the above mentioned result by Fefferman and Phong. We consider symbols in standard symbol classes appearing in semiclassical analysis, and also in the Muckenhoupt classes (hence possibly non-smooth). 
 \end{abstract}
\maketitle
\section{Introduction and discussion of the main results}
This note is devoted to two related issues: a new uncertainty principle for discrete series representations of connected Lie groups, and an {\it explicit formula} for the order of magnitude of the bottom of the spectrum and of the essential spectrum of anti-Wick operators in $\bR^d$ with nonnegative symbols, analogous to that obtained by Fefferman and Phong \cite{fefferman_phong} for Weyl pseudodifferential operators. 
\subsection{The Fefferman-Phong uncertainty principle for representations}
Let ${{h}}>0$ and $\varphi(x)=\pi^{-d/4}e^{-\frac{1}{2}|x|^2}$, $x\in\bR^d$. Consider the so-called ``coherent states"
\begin{equation}\label{eq cs}
\varphi^{{{h}}}_{(x_0,\omega_0)}(x):={{h}}^{-d/4} e^{\frac{i}{{{h}}}\omega_0 x} \varphi({{h}}^{-1/2}(x-x_0)) \qquad x,x_0,\omega_0\in\bR^d.
\end{equation}
It is well known that the celebrated Heisenberg uncertainty principle can be rephrased in several ways as an inequality involving the functions $\varphi^{{{h}}}_{(x_0,\omega_0)}$. For example, one can show that the following inequality holds true, for $f\in L^2(\bR^d)$: 
\begin{equation}\label{eq uncertainty} 
(2\pi{{h}})^{-d}\int_{\bR^{2d}}(|x|^2+|\omega|^2)|\langle f,\varphi^{{{h}}}_{(x,\omega)}\rangle|^2\,dx\,d\omega\geq 2d{{h}}\|f\|^2_{L^2(\bR^d)}.
\end{equation}
Moreover equality occurs if and only if $f=c\varphi^{{h}}_{(0,0)}$ for some $c\in\bC$ (see e.g. \cite{daubechies,folland,grochenig} and Remark \ref{rem shubin} below).

The above expression $\langle f,\varphi^{{h}}_{(x,\omega)}\rangle$ can be regarded as a matrix coefficient of the Schr\"odinger representation of the reduced Heisenberg group (see e.g. \cite{wong} and Section \ref{sec heisenberg} below). This suggests that similar estimates can hold for other representations of Lie groups, where the concentration of the matrix coefficients is measured in terms of weighted $L^p$ norms.

The first part of this note is devoted precisely to this problem. We will prove that a similar lower bound in fact holds for every square-integrable irreducible unitary representation of a connected Lie group $G$ (under a certain mild assumption). The role of the weight  $|x|^2+|\omega|^2$ in \eqref{eq uncertainty} will be played by any function in the local Muckenhoupt class $A_{\infty,{\rm loc}}$ (see Definition \ref{def muck} below) associated with a subRiemannian left-invariant metric of $G$ and a relatively invariant measure. In particular, the group $G$ itself will play the role of the phase space (we refer to the recent contribution by Gr\"ochenig \cite{grochenig_nilpotent} for suggestive speculations about the possibility of regarding any {\it nilpotent} Lie group as a phase space).

To state our result precisely, let us introduce some notation and terminology. 
Let $G$ be a connected Lie group, and let $\mu$ be a right Haar measure of $G$. Let $\chi:G\to(0,+\infty)$ be a continuous character and let $\mu_\chi$ be the absolutely continuous measure that has density $\chi$ with respect to $\mu$. In particular, if $\chi$ is the modular function, $\mu_\chi$ is a left Haar measure, which will be also denoted by $\lambda$. 

Let $\pi:G\to\mathcal{U}(\mathcal{H})$ be a square-integrable strongly continuous irreducible unitary representation on a Hilbert space $\mathcal{H}$, i.e. a {\it discrete series representation} of $G$ . Hence, there exists $\phi\in\mathcal{H}$, with $\|\phi\|=1$, satisfying the ``admissibility condition"
\begin{equation}\label{eq admissibility} 
c_\phi:=\int_G |\langle \phi,\pi(x)\phi\rangle|^2\,d \lambda(x)<\infty. 
\end{equation}
For such a $\phi$, we define the corresponding {\it generalized wavelet transform} (also named ``voice transform" in the literature)
\begin{equation}\label{eq Vphi}
    V_\phi f(x)=\frac{1}{\sqrt{c_\phi}}\langle f,\pi(x)\phi\rangle,\qquad x\in G,\ f\in\mathcal{H}.
\end{equation}
It turns out that $V_\phi:\mathcal{H}\to L^2(G,\lambda)$ is an isometry, and that the function $V_\phi f$ is continuous on $G$, for every $f\in\mathcal{H}$. Many of the transforms arising in applied harmonic analysis (e.g. the short-time Fourier transform, the wavelet transform and the shearlet transform) fall within this general framework (see e.g. \cite{berge2022,feichtinger,fuhr,wong} and the references therein; see also Knapp's book \cite{knapp_book} for a comprehensive discussion of discrete series representations of semisimple Lie groups and their connection with complex analysis).

For $1\leq p<\infty$, we are interested in the following minimization problem: 
\begin{equation}\label{eq min pro} 
I(w):=\inf\Big\{\int_{G} w|V_\phi f|^p\,d\mu_\chi:\  f\in\mathcal{H}\textrm{ and }\|V_\phi f\|_{L^p(G,\mu_\chi)}=1\Big\},
\end{equation}
where $w:G\to[0,+\infty)$ is a Borel function.

 Let $X_1,\ldots,X_\ell $ be a family of left-invariant linearly independent vector fields
which satisfy H\"ormander’s condition, and denote by ${\rm d}_C(\cdot,\cdot)$ the associated left-invariant Carnot–Carath\'eodory distance (see Section \ref{sec lis}). Accordingly, we assume that the weight $w$ belongs to the local Muckenhoupt class $A_{\infty,{\rm loc}}$ associated with the metric measure space $(G,{\rm d}_C,\mu_\chi)$ (see \cite{bruno_calzi,grafakos,stein_book, stromberg} and Section \ref{sec prel} below). Let us briefly recall its definition. 
\begin{definition}[Local Muckenhoupt class $A_{\infty,{\rm loc}}$]\label{def muck}
Let $\alpha\in(0,1)$. We denote by $A_{\infty,{\rm loc}}^{\alpha}(G,{\rm d}_C,\mu_\chi)$ the set of all functions $w\in L^1_{\rm loc}(G,\mu_\chi)$, with $w>0$ almost everywhere, such that for all balls $B$ of radius $r\in(0,1]$ and all Borel subsets $E\subset B$ in the metric measure space $(G,{\rm d}_C,\mu_\chi)$, 
\begin{equation}\label{eq muck}
    \mu_\chi(E)\geq (1-\alpha) \mu_\chi(B) \Longrightarrow \int_E w\, d\mu_\chi\geq \alpha \int_B w\, d\mu_\chi.
\end{equation}
We also set $A_{\infty,{\rm loc}}(G,{\rm d}_C,\mu_\chi)=\cup_{0<\alpha<1}A_{\infty,{\rm loc}}^{\alpha}(G,{\rm d}_C,\mu_\chi)$.
\end{definition}
Here and in all the following by ``ball" we will always mean ``open ball".

Coming back to the problem \eqref{eq min pro}, notice that we have the trivial lower bound $I(w)\geq \inf w$. We will improve this estimate by replacing the infimum of $w$ by the infimum of the averages of $w$ over unit balls, that is 
\begin{equation}\label{eq iup}
    I_{\rm UP}(w):=\inf\Big\{\fint_B w\,d\mu_\chi:\ B\textrm{ is a ball of radius }1\Big\}.
\end{equation}

To this end, we will assume a mild horizontal regularity and decay condition on $V_\phi \phi$, namely that 
\begin{equation}\label{eq phi}
C_0(\phi):=\frac{1}{\sqrt{c_\phi}}\sum_{j=1}^\ell\int_G \chi^{-1/p} |X_j V_\phi\phi|d\lambda<\infty.
\end{equation}

\begin{theorem}\label{teomain}
    Let $1\leq p<\infty$, $\alpha\in(0,1)$, $\eta>0$. There exists a constant $C=C(\alpha,p,\eta)>0$ such that, for every $\phi\in\mathcal{H}$, with $\|\phi\|=1$, satisfying \eqref{eq admissibility} and \eqref{eq phi} with $C_0(\phi)\leq \eta$, and every weight $w\in A_{\infty,{\rm loc}}^{\alpha}(G,{\rm d}_C,\mu_\chi)$ we have 
    \[
      I(w)\geq C I_{\rm UP}(w).
    \]
\end{theorem}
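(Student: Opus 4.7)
The strategy is a Fefferman--Phong-type localization argument. The plan is to decompose $G$ into unit balls, use the admissibility hypothesis \eqref{eq phi} to derive a bound on the horizontal derivatives of $V_\phi f$, apply a subRiemannian Poincar\'e inequality to show that on most unit balls $|V_\phi f|^p$ is comparable to its average on a set of almost full $\mu_\chi$-measure, and then exploit the $A_{\infty,{\rm loc}}^\alpha$ property of $w$ to convert this into the desired weighted lower bound.

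\medskip

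\textbf{Derivative estimate.} First I would invoke the reproducing identity $V_\phi f = c_\phi^{-1/2}\,V_\phi f * V_\phi \phi$ (with $*$ the convolution with respect to left Haar measure $\lambda$). Left-invariance of the $X_j$ yields $X_j V_\phi f = c_\phi^{-1/2}\,V_\phi f * X_j V_\phi \phi$. A weighted Young's inequality for convolution on the relatively invariant measure space $(G,\mu_\chi)$---in which the factor $\chi^{-1/p}$ from \eqref{eq phi} is forced by the behavior of $\mu_\chi$ under translations---then produces
\[
\sum_{j=1}^\ell \|X_j V_\phi f\|_{L^p(\mu_\chi)} \leq C_0(\phi)\,\|V_\phi f\|_{L^p(\mu_\chi)} \leq \eta\,\|V_\phi f\|_{L^p(\mu_\chi)}.
\]

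\medskip

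\textbf{Local uncertainty and Muckenhoupt.} For each unit ball $B$ in a bounded-overlap Vitali-type cover of $G$, I would apply the subRiemannian $L^p$-Poincar\'e inequality for the H\"ormander system $(X_j)$ to $|V_\phi f|^p$ via the chain rule $|X_j|V_\phi f|^p|\leq p|V_\phi f|^{p-1}|X_j V_\phi f|$ to obtain
\[
\int_B\bigl|\,|V_\phi f|^p - m_B\,\bigr|\,d\mu_\chi \leq C_p\Big(\int_{B^*}|V_\phi f|^p\,d\mu_\chi\Big)^{1-1/p}\Big(\sum_j\int_{B^*}|X_j V_\phi f|^p\,d\mu_\chi\Big)^{1/p},
\]
where $m_B := \fint_B|V_\phi f|^p\,d\mu_\chi$ and $B^*$ is a fixed dilation of $B$. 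I would call $B$ \emph{good} when the right-hand side is bounded by a chosen fraction $\delta=\delta(\alpha,p,\eta)$ of $\int_B|V_\phi f|^p\,d\mu_\chi$. Chebyshev then forces $\mu_\chi\bigl(\{x\in B:\,|V_\phi f(x)|^p\geq m_B/2\}\bigr)\geq (1-\alpha)\mu_\chi(B)$, and the implication \eqref{eq muck} applied to this level set $E$ gives $\int_E w\,d\mu_\chi \geq \alpha\int_B w\,d\mu_\chi$. Hence
\[
\int_B w|V_\phi f|^p\,d\mu_\chi \geq \tfrac{m_B}{2}\int_E w\,d\mu_\chi \geq \tfrac{\alpha}{2}\,I_{\rm UP}(w)\int_B|V_\phi f|^p\,d\mu_\chi.
\]

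\medskip

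\textbf{Summation and the main obstacle.} Summing this bound over the good balls and controlling the bad-ball total by $\sum_{\text{bad}}\int_B|V_\phi f|^p\,d\mu_\chi \leq N\delta^{-1}\eta^p\,\|V_\phi f\|_{L^p(\mu_\chi)}^p$ (using the derivative estimate together with bounded overlap $N$), the claim would follow once $\delta$ is tuned so that a definite positive fraction of the total $L^p(\mu_\chi)$-mass lies in the good balls. The decisive point---and the main obstacle---is exactly this balance in the choice of $\delta$: Poincar\'e--Chebyshev demands $\delta$ small in terms of $\alpha$, while the bad-ball estimate demands $\delta$ large in terms of $\eta$. Reconciling these is what will produce the dependence $C=C(\alpha,p,\eta)$ and its degeneration as $\eta\to\infty$. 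A secondary subtlety is establishing the weighted Young inequality of the first step, where the modular factor $\chi^{-1/p}$ must be carefully matched to the convolution structure of the relatively invariant measure $\mu_\chi$.
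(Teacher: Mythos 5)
Your derivative estimate and your way of feeding the $A_{\infty,{\rm loc}}^{\alpha}$ hypothesis into a level-set bound are both in the right spirit, and you correctly identify the crucial tension in the last paragraph. But as written there is a genuine gap, and it sits precisely at that tension: the good/bad decomposition at \emph{unit} scale cannot reconcile the two constraints on $\delta$ for arbitrary $\eta>0$. From the Chebyshev step you need roughly $\delta\le\alpha/2$ so that the level set $E=\{|V_\phi f|^p\ge m_B/2\}$ satisfies $\mu_\chi(E)\ge(1-\alpha)\mu_\chi(B)$ and \eqref{eq muck} applies; from the bad-ball estimate, after H\"older across the sum, you get roughly $\sum_{\rm bad}\int_B|V_\phi f|^p\,d\mu_\chi\lesssim\delta^{-1}N^{1/p}\eta\,\|V_\phi f\|_{L^p(\mu_\chi)}^p$, so you need $\delta\gtrsim\eta$ to keep a definite fraction of mass in the good balls. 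When $\eta$ is large these two requirements are incompatible, and the theorem is claimed for \emph{all} $\eta>0$. So the argument, as structured around unit balls with a $\delta$-tuning, does not close.

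The paper resolves exactly this by introducing a second small parameter that yours lacks: the \emph{radius} $r\in(0,1]$ of the covering balls. It first proves a local Fefferman--Phong lemma at scale $r$: for any ball $B$ of radius $r$ and any $w$ with $w_B\ge r^{-p}$,
\[
\int_B\big(|\nabla u|^p+w|u|^p\big)\,d\mu_\chi\ge C\,r^{-p}\|u\|_{L^p(B,\mu_\chi)}^p,
\]
where the $A_\infty$ hypothesis enters through Lemma~\ref{lemma lem1} (the set $\{w\ge\alpha w_B\}$ occupies at least an $\alpha$-fraction of $B$). Applying this to the rescaled weight $r^{-p}w/C_r(w)$ and summing over a bounded-overlap cover by $r$-balls gives $\int_G\big(|\nabla u|^p+r^{-p}w\,|u|^p/C_r(w)\big)\,d\mu_\chi\ge C\,r^{-p}$. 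With $u=V_\phi f$, the hypothesis \eqref{eq phi} bounds $\int_G|\nabla u|^p\,d\mu_\chi$ by a constant $C''=C''(\eta,p)$ not depending on $r$; choosing $r$ so small that $Cr^{-p}\ge 2C''$ one absorbs the gradient term outright --- no good/bad dichotomy at all. Finally Corollary~\ref{pro loc doubl} (local doubling for $A_{\infty,{\rm loc}}$ weights) converts $C_r(w)$ back to $C_1(w)=I_{\rm UP}(w)$. This is the ingredient missing from your sketch: the freedom to shrink $r$ in terms of $\eta$, paid for by the Poincar\'e gain $r^{-p}$ and the $A_\infty$ doubling, is what makes the constant depend on $\eta$ in a way that does not collapse. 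If you revise your proposal, the natural fix is to run your decomposition at scale $r=r(\eta)$ rather than $1$, replace the Chebyshev/good-ball device by the direct absorption above, and invoke the doubling of $w$ to pass from $r$-ball averages to unit-ball averages.
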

The lower bound in Theorem \ref{teomain} is sharp in general, in the sense that in certain special cases one can prove that the upper bound $I(w)\leq C I_{\rm UP}(w)$ (for a different $C$) holds too, as we will see in a moment. 

Theorem \ref{teomain} is reminiscent of the Fefferman-Phong lower bound \cite{fefferman_phong,fefferman} for second order pseudodifferential operators with a nonnegative symbol $a(x,\omega)$ (see also Fefferman and S\'anchez-Calle \cite{fefferman1986}, Nagel, Stein and Wainger \cite{nagel}, Parmeggiani \cite{parmeggiani}, S\'anchez-Calle \cite{sanchez}). Roughly speaking, their (deep) result states that the order of magnitude of the bottom of the spectrum of such an operator is given by
\begin{equation}\label{eq feff}
\inf_{\mathcal{B}} \sup_{(x,\omega)\in \mathcal{B}} a(x,\omega),
\end{equation}
where the infimum is taken over a family of testing boxes $\mathcal{B}$ of measure $1$ in phase space, conveniently associated with $a$. Hence, the geometry of the phase space is tailored to the given symbol. Notice that in Theorem \ref{teomain} the metric is, instead, fixed in advance, which makes things simpler. On the other hand, since $w$ could be singular, in \eqref{eq iup} the average of $w$ appears (instead of the supremum in \eqref{eq feff}).

As strategy, we will prove a suitable lower bound for the $p$-subLaplacian with potential $w$, at sufficiently small scales, and then we will come back to the original problem by using the reproducing property of $V_\phi$. This (perhaps unexpected) connection with a problem for subLaplacians will enable us to use some ideas and tools from \cite{fefferman} and also from Jerison \cite{jerison}, where lower bounds were proved for partial differential equations with constant coefficients (see H\"ormander \cite[Theorem 3.2.5]{hormander_book} for an early instance of this philosophy, and also \cite{shen}). 

Notice that the use of weighed $L^p$ norms as a measure of the time-frequency concentration dates back at least to Cowling and Price \cite{cowling} and Daubechies \cite{daubechies}. 
Moreover, optimization problems similar to \eqref{eq min pro}, in the case of the short-time Fourier transform and the wavelet transform (when $\phi$ is a Gaussian function or a Cauchy wavelet) have recently attracted great interest, as demonstrated by an impressive amount of work in the last two years (see
\cite{abreu2021donoho,frank2023sharp,galbis2022,GGRT,knutsen,ortega,trapasso,nicolatilli_fk,nicolatilli_norm,ramos,riccardi2023} and also Seip \cite{seip} for early work in this connection). We emphasize that Theorem \ref{teomain} is new even for these particular transforms. 
Also, we point out that uncertainty inequalities of a different kind, on unimodular Lie groups, were also addressed by a number of authors (see  \cite{astengo,ciatti2015,ciatti2007,dallara_trevisan,martini2010,ruzhansky2017} and the references therein). 

\subsection{Applications to Anti-Wick operators}
As an application of Theorem \ref{teomain} (in the case of the Schr\"odinger representation of the reduced Heisenberg group) we find an explicit formula for the order of magnitude of the bottom of the spectrum and of the essential spectrum of anti-Wick operators in $\bR^d$ with nonnegative symbols. This  is the analog, for anti-Wick operators, of the above mentioned formula by Fefferman and Phong \cite{fefferman_phong} for Weyl pseudodifferential operators and represented also the original motivation for this note. 

Anti-Wick operators (also named localization, or Toeplitz, or FBI operators in the literature) were introduced by Berezin \cite{berezin}, motivated by the mathematical formalism of quantum field theory (second quantization), and correspond to a certain ordering of the creation and annihilation operators (see Remark \ref{rem shubin}). Roughly speaking, to a function $a(x,\omega)$ in phase space one associates an operator $A_{a,h}$ on $L^2(\bR^d)$ given by 
\[
A_{a,h} f=(2\pi{{h}})^{-d}\int_{\bR^{2d}}a(x,\omega)\langle f, \varphi^{{h}}_{(x,\omega)}\rangle \varphi^{{h}}_{(x,\omega)} \,dx\,d\omega,
\]
where the coherent states $\varphi^{{h}}_{(x,\omega)}$ are defined in \eqref{eq cs}
(see Section \ref{sec anti-wick} for the precise definition, also for more general coherent states). Anti-Wick operators appear naturally in semiclassical analysis, complex analysis, mathematical physics and mathematical signal processing (see e.g. \cite{berezin_shubin,daubechies,degosson_book,teofanov,zhu,zworski}), but they play also a fundamental role in the study of lower bounds of pseudodifferential operators in view of the fact that nonnegative symbols give rise to nonnegative operators; see the pioneering paper by C\'ordoba and Fefferman \cite{Cordoba} and Lerner's book \cite{lerner}. However, because of the uncertainty principle, one expects that the spectrum is in fact separated from $0$. For example, although the symbol $|x|^2+|\omega|^2$ vanishes at the origin, in \eqref{eq uncertainty} we have a lower bound of order $h$. Here we are interested in estimating, in general, {\it the order of magnitude of this spectral gap}. It turns out, perhaps surprisingly, that a complete answer can be given for several symbol classes without any assumption on the zero set of the symbol. We can summarize our results for Muckenhoupt symbols as follows (see Theorem \ref{mainteo2} and Corollary \ref{maincor}). 

We denote by $B((x_0,\omega_0),r)$ the Euclidean ball in $\bR^{2d}$ with center $(x_0,\omega_0)$ and radius $r$.\par\medskip
{\it Suppose that the symbol $a$ belongs to the (Euclidean) Muckenhoupt class $A_{\infty}(\bR^{2d})$ (see Definition \ref{def muck2}) and let $\sigma(A_{a,h})$ and $\sigma_{\rm ess}(A_{a,h})$ be the spectrum and essential spectrum of $A_{a,h}$, respectively. Let 
\[
\boldsymbol{\lambda}(a,h):=\inf_{(x_0,\omega_0)\in\bR^{2d}} \fint_{B((x_0,\omega_0),{h}^{1/2})} a(x,\omega)\, dx\,d\omega
\]
and 
\[
\boldsymbol{\lambda}_{\rm ess}(a,h):=\liminf_{(x_0,\omega_0)\to\infty}\fint_{B((x_0,\omega_0),{h}^{1/2})} a(x,\omega)\, dx\,d\omega. 
\]
Then we have 
\begin{equation}\label{eq 7mar1} 
C^{-1} \boldsymbol{\lambda}(a,h)\leq \inf \sigma(A_{a,h}) \leq C \boldsymbol{\lambda}(a,h)
\end{equation} 
and 
\begin{equation}\label{eq 7mar2}
C^{-1} \boldsymbol{\lambda}_{\rm ess}(a,h)\leq \inf \sigma_{\rm ess}(A_{a,h})\leq C \boldsymbol{\lambda}_{\rm ess}(a,h),
\end{equation}
for a suitable constant $C>0$ depending only on the $A_\infty$ bound of $a$.
}\par\medskip
Some remarks are in order.
\begin{remark}\ 
\begin{itemize}
\item[(a)] The class $A_\infty(\bR^{2d})$ contains every nonnegative (nonzero) polynomial, which represented a case of special interest in \cite{berezin} (see Remark \ref{rem shubin} and Corollary \ref{cor poli}). The motivating example is given by $a(x,\omega)=|x|^2+|\omega|^2$, for which $\boldsymbol{\lambda}(a,h)=C_dh$ (for a constant $C_d$ depending on the dimension). In this case \eqref{eq 7mar1} agrees with \eqref{eq uncertainty}. 
\item[(b)] The estimate \eqref{eq 7mar1} is interesting mainly when ${\rm ess\,inf}_{\bR^{2d}}\, a=0$, in which case the term $\boldsymbol{\lambda}(a,h)$ represents the quantum correction to the ``classical'' ground energy (that is $0$). In general one can of course apply the above estimate to $a-{\rm ess\,inf}_{\bR^{2d}}\, a$ (provided this symbol belongs to $A_\infty(\bR^{2d}$)); see Corollary \ref{cor 7mar}. 
\item[(c)] As a consequence of \eqref{eq 7mar2}, 
{\it $A_{a,h}$ has purely discrete spectrum (i.e. consisting of isolated eigenvalues of finite multiplicity) if and only if }
 \[
 \lim_{(x_0,\omega_0)\to\infty}\int_{B((x_0,\omega_0),{h}^{1/2})} a(x,\omega)\, dx\,d\omega=+\infty
 \]
(here one could equivalently consider balls of any fixed radius $R>0$). 
  This is a refinement of the well known sufficient condition ``$a(x,\omega)\to+\infty$ as $(x,\omega)\to\infty$" in Shubin \cite{shubin1972} (which is valid, however, for more general symbols). 
\end{itemize}
\end{remark}
In Section \ref{sec semiclassicalbis} we will show similar, optimal estimates for semiclassical symbol classes, modulo an error $O(h^N)$ with $N$ arbitrarily large ($h\in (0,1]$). Again, no assumption is made on the zero set of the symbol.


 \subsection{Relation to Schr\"odinger operators}
The above problems are related, at least in spirit, to the {\it positivity and discreteness of the spectrum} of Schr\"odinger operators. These issues have a long and distinguished tradition going back at least to the work by Mol\v{c}anov \cite{molchanov} (where the average of the potential over cubes going to infinity already appeared); see also the already cited work by Fefferman and Phong \cite{fefferman_phong,fefferman}, Maz'ya and Shubin \cite{mazya} and Simon \cite{simon}. In the case of Muckenhoupt potentials the problem of the discretness of the spectrum was addressed by Dall'Ara \cite{dallara}  (see also Auscher and Ben Ali \cite{auscher}), whereas the phase space localization of the eigenfunctions is currently object of study by Decio, De Dios Point, Malinnikova and Mayboroda \cite{decio}  (see also Arnold, David, Filoche, Jerison and Mayboroda \cite{arnold2019} and the references therein). Also, recently Bruno and Calzi \cite{bruno_calzi} considered a left-invariant subLaplacian on a noncompact connected Lie group with a potential $V\in A_{\infty,{\rm loc}}(G,{\rm d}_C,\lambda)$ (with the above notation, where $\lambda$ is a left Haar measure). A characterization of the discreteness of the spectrum was provided under the hypothesis that the absolutely continuous measure that has density $V$ with respect to $\lambda$ is locally doubling (cf. \eqref{eq loc doubl}). As a byproduct of the results in Section \ref{sec prel} we will see that, in fact, this condition is always verified for such a potential (Corollary \ref{pro loc doubl}). Hence their characterization holds for every $V\in A_{\infty,{\rm loc}}(G,{\rm d}_C,\lambda)$;  we will briefly comment on this in the Appendix. 
\subsection{Further developments} 
The above results suggest a number of far-reachnig generalizations. First, it would be natural to extend the above estimates to rougher symbols, in terms of a convenient notion of capacity, as in the work by Maz'ya and Shubin \cite{mazya} on the positivity and discreteness of the spectrum of Schr\"odinger operators. Secondly, it is likely that the same strategy as in this note can be used for reproducing kernel Hilbert spaces of holomorphic functions, such as weighted Bergman and Hardy spaces (indeed, the above results for anti-Wick operators can be easily rephrased in terms of Toeplitz operators on the Fock space \cite{zhu}). Also, similar estimates for higher eigenvalues (below the essential spectrum) are certainly worth studying, as well as the case of symbols not bounded below, in the spirit of Fefferman's results \cite{fefferman} for Schr\"odinger operators. We decided to postpone to a subsequent work a systematic investigation of these issues, which would take us too far.

\section{Preliminary results}\label{sec prel}
\subsection{Left-invariant subRiemannian structures} \label{sec lis}
We use the same notation as in the introduction. Hence, $G$ is a connected Lie group, $X_1,\ldots,X_\ell $ are linearly independent left-invariant vector fields satisfying H\"ormander's condition (that is, their iterated commutators span the Lie algebra as a real vector space) and ${\rm d}_C(\cdot,\cdot)$ is the corresponding Carnot–Carath\'eodory distance, defined as follows.

A piecewise $C^1$ curve $\gamma:[0,T]\to G$ is said {\it horizontal} if $\gamma'(t)=\sum_{j=1}^\ell c_j(t)X_j$ for a.e.\ $t$, for some functions $c_j\in L^\infty([0,T])$, $j=1,\ldots,\ell$. We define the length of such a curve as 
\[
L(\gamma)=\int_0^T \Big(\sum_{j=1}^\ell c_j(t)^2\Big)^{1/2}\,dt.
\]
For $x,y\in G$, by the H\"ormander's condition and the connectedness of $G$ there is at least one horizontal curve $\gamma$ joining $x$ and $y$ ($\gamma(0)=x$, $\gamma(T)=y$) and we define 
\[
{\rm d}_C(x,y)=\inf_\gamma L(\gamma)
\]
where the infimum is taken over all these horizontal curves. It turns out that ${\rm d}_C(\cdot,\cdot)$ is a left-invariant distance on $G$ -the Carnot-Carath\'eodory distance associated with the vector fields $X_1,\ldots,X_\ell$ --- and is continuous as a function on $G\times G$. Indeed, it induces on $G$ the same manifold topology and the balls are relatively compact. Moreover, for any $x,y\in G$ there exists a length minimizing horizontal curve $\gamma$ joining $x$ and $y$, namely with $L(\gamma)={\rm d}_C(x,y)$. Such a curve will be called a geodesic. We refer to \cite{agrachev_book} for a proof of all these facts.  

We denote by $\mu$ a right Haar measure of $G$, and we write $\mu_\chi$ for the absolutely continuous measure that has density $\chi$ with respect to $\mu$, where $\chi:G\to(0,+\infty)$ is any continuous character. We have 
\[
0<\mu_\chi(B(x,r))<\infty,\qquad x\in G,\ r>0,
\]
where $B(x,r):=\{y\in G: {\rm d}_C(x,y)<r\}$. 

While $\mu_\chi$ is not left-invariant, in general, it is relatively invariant in the sense that, for every Borel subset $E\subset G$ and $x\in G$,
\[
\mu_\chi(xE)=\chi(x)\Delta(x^{-1})\mu_\chi(E)
\]
where $\Delta(x)$ is the modular function.

It follows from \cite{nagel} that $\mu_\chi$ is locally doubling, in the sense that, for every $R>0$, 
\begin{equation}\label{eq loc doubl}
D(R):=\sup_B\frac{\mu_\chi(2B)}{\mu_\chi(B)}<\infty,
\end{equation}
where the supremum is taken over the balls of radius $r\in (0,R]$, and $2B$ denotes the ball with the same center as $B$ and double radius.

\subsection{Reproducing property} 
We have defined in \eqref{eq Vphi} the generalized wavelet transform $V_\phi$ associated with a square-integrable irreducible unitary representation $\pi:G\to\mathcal{U}(\mathcal{H})$, where $\mathcal{H}$ is a Hilbert space. The vector $\phi\in\mathcal{H}$, with $\|\phi\|=1$, satisfies the admissibility condition \eqref{eq admissibility}, where $\lambda$ is the left Haar measure.  As already observed there, $V_\phi:\mathcal{H}\to L^2(G,\lambda)$ is an isometry (\cite[Theorem 7.2]{wong}). Hence, writing \[
V_\phi f= V_\phi V_\phi^\ast V_\phi f,
\] and using that $$V_\phi^\ast F=\frac{1}{\sqrt{c_\phi}}\int_G F(x) \pi(x)\phi \, d\lambda$$ for $F\in L^2(G,\lambda)$ (where $c_\phi$ is defined in \eqref{eq admissibility}), we obtain the reproducing property 
\begin{equation}\label{eq due0}
V_\phi f(x)=\frac{1}{\sqrt{c_\phi}} V_\phi f\ast V_\phi\phi(x), 
\end{equation}
(cf. \cite[Theorem 7.6]{wong} and also \cite{feichtinger}), where the convolution of two functions $u,v$ on $G$ is defined as 
\[
u\ast v(x)=\int_G u(y)v(y^{-1}x)\,d\lambda(y)=\int_G u(xy)v(y^{-1})\,d \lambda(y).
\] 
Also, observe that if $X$ is a left-invariant vector field, 
\[
X(u\ast v)= u\ast Xv.
\]
We also observe that
\[
V_\phi\phi(x^{-1})=\overline{V_\phi\phi(x)}.
\]
Hence, if $X$ is a (real) left-invariant vector field on $G$, 
\begin{equation}\label{eq due}
X(V_\phi f\ast V_\phi\phi)(x)=\int_G V_\phi f(xy) \overline{XV_\phi \phi(y)} d \lambda(y). 
\end{equation}

\subsection{Poincar\'e inequality}
We will need the local Poincar\'e inequality in the following form. For any ball $B\subset G$ and $u\in L^1_{\rm loc}(G)$, we write 
\begin{equation}\label{eq ub}
u_B=\fint_B u \, d\mu_\chi:=\frac{1}{\mu_\chi(B)}\int_B u \, d\mu_\chi
\end{equation}
for the average of $u$ over $B$. Moreover, let $\nabla u=(X_1 u,\ldots,X_\ell u)$ be the {\it horizontal} gradient, and set $|\nabla u|=(\sum_{j=1}^\ell |X_j u|^2)^{1/2}$. 
\begin{theorem}[Poincar\'e inequality]\label{eq teo poincare}
    Let $1\leq p<\infty$. For every $R>0$ there exists a constant $C_R>0$ such that, for every ball $B$ of radius $r\in (0,R]$ and every $u\in L^p_{\rm loc}(G,\mu_\chi)$, with $\nabla u\in L^p_{\rm loc}(G,\mu_\chi)$, 
\begin{equation}\label{eq poincare}
    \int_B |\nabla u|^p \,d\mu_\chi\geq C_R r^{-p}\|u-u_B\|_{L^p(B,\mu_\chi)}^p.
    \end{equation}
    \end{theorem}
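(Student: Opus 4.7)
The plan is to reduce, in two steps, to the classical local Poincar\'e inequality for Hörmander systems with respect to a Haar measure on $G$ (cf.\ Jerison, Nagel--Stein--Wainger). First, left-invariance lets us recenter every ball at the identity $e$. Second, on the compact neighborhood $\overline{B(e,R)}$ the continuous character $\chi$ is comparable to the constant $1$, so $\mu_\chi$ and the Haar measure are mutually equivalent there and the inequality transfers without essential loss.

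For the first step, fix $B = B(x,r)$ with $r \le R$ and set $\tilde u(y) := u(xy)$. Left-invariance of $X_1,\dots,X_\ell$ gives $|\nabla\tilde u(y)| = |\nabla u(xy)|$, and left-invariance of ${\rm d}_C$ gives $B(x,r) = xB(e,r)$. The transformation rule $\mu_\chi(xE) = \chi(x)\Delta(x^{-1})\mu_\chi(E)$ yields the change of variables
\[
\int_{xA} f\, d\mu_\chi = \chi(x)\Delta(x^{-1}) \int_A f(x\,\cdot)\, d\mu_\chi
\]
for every nonnegative Borel $f$ and Borel $A\subset G$. From this one reads off that $u_{B(x,r)} = \tilde u_{B(e,r)}$ and that both sides of \eqref{eq poincare} rescale by the same overall factor $\chi(x)\Delta(x^{-1})$ when we replace $(u,B)$ with $(\tilde u, B(e,r))$. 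It thus suffices to prove \eqref{eq poincare} for balls $B = B(e,r)$ with $r \in (0,R]$, all of which lie in the compact set $\overline{B(e,R)}$.

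On this compact set the continuous positive function $\chi$ is bounded above and below by positive constants $c_R^\pm$ depending only on $R$, so $\mu_\chi$ and the right Haar measure $\mu$ are mutually equivalent there. The classical local Poincar\'e inequality for Hörmander vector fields, applied with the Haar measure $\mu$, gives
\[
\|u - u_B^\mu\|_{L^p(B,\mu)}^p \le C_R\, r^p \int_B |\nabla u|^p\, d\mu,
\]
where $u_B^\mu$ denotes the $\mu$-average of $u$ on $B$. Passing from $\mu$ to $\mu_\chi$ on both sides only introduces the ratio $c_R^+/c_R^-$, and replacing $u_B^\mu$ by the $\mu_\chi$-average $u_B$ costs at most a factor $2^p$ via $\|u-u_B\|_{L^p(B,\mu_\chi)} \le 2\|u-u_B^\mu\|_{L^p(B,\mu_\chi)}$ from the triangle inequality. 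Combining the two steps yields \eqref{eq poincare} with a constant depending only on $R$ and $p$. The only genuine analytic input is the classical subRiemannian Poincar\'e inequality, which is invoked as a black box; the rest is bookkeeping around the left-invariance of $d_C$ and of the horizontal vector fields, together with the triviality that a continuous character is bounded on compact sets. I do not expect any substantial obstacle.
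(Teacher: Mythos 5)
Your two-step reduction (left-translation to center at $e$, then comparison of $\mu_\chi$ with a Haar measure on the fixed compact set $\overline{B(e,R)}$) is clean and the bookkeeping is correct: both sides of the inequality pick up the same factor $\chi(x)\Delta(x^{-1})$ under translation, the averages transform as you say, and the factor-of-$2$ trick for swapping averages is fine. The paper instead cites Bruno--Peloso--Vallarino directly for the $\mu_\chi$-version, so your route is a genuinely different (and more self-contained) reduction to a black box.

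However, there is a real gap you have not addressed: the regularity of $u$. The statement asks for the inequality for $u\in L^p_{\rm loc}(G,\mu_\chi)$ with the \emph{distributional} horizontal gradient $\nabla u\in L^p_{\rm loc}(G,\mu_\chi)$, whereas the ``classical'' subRiemannian Poincar\'e inequality (Jerison; Nagel--Stein--Wainger give the metric geometry but not Poincar\'e) is proved for $u\in C^\infty$. Passing from smooth $u$ to this rough class is not a triviality: one needs a Friedrichs/Meyers--Serrin type density statement, that is, that mollifications $u_\varepsilon=\varphi_\varepsilon\ast u$ (convolution on $G$ with a smooth approximate identity for $L^1(G,\lambda)$) satisfy $\nabla u_\varepsilon=\varphi_\varepsilon\ast\nabla u$ and converge appropriately, together with the convergence of the averages $(u_\varepsilon)_B\to u_B$ in the $\mu_\chi$-sense. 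The paper devotes the bulk of its proof precisely to this mollification step, and it is where the interaction between the group convolution, the left-invariant vector fields, and the relatively invariant measure $\mu_\chi$ has to be checked (note $(u_\varepsilon)_B\to u_B$ requires $\chi(e)=\Delta(e)=1$). Your closing assertion that you ``do not expect any substantial obstacle'' papers over exactly this; it needs to be carried out, or the classical black box you invoke must be explicitly taken in a Sobolev-space formulation, with a reference that actually provides it for horizontal gradients in the distributional sense.
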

    \begin{proof} The inequality \eqref{eq poincare} was proved in \cite[Theorem 3.1]{bruno} for a general noncompact connected Lie group $G$, assuming $u\in C^\infty(G)$ (see also \cite{dallara_trevisan} for a new, beautiful approach to Poincar\'e inequalities on groups). The same proof in fact holds for $G$ compact (and connected).
    
     For $u$ as in the statement, when proving \eqref{eq poincare} on a given ball $B$, we can suppose that $u$ has compact support, hence $u,\nabla u\in L^p(G,\mu_\chi)$. Now, \eqref{eq poincare} holds for $u_\varepsilon=\varphi_\varepsilon \ast u$,
   if $\varphi_\varepsilon$ is a {\it smooth} approximate identity for $L^1(G,\lambda)$ (see e.g. \cite[Section 2.2]{bruno_calzi}). On the other hand, as $\varepsilon\to0$, 
   \[
   (u_\varepsilon)_B=u_B\int_G \varphi_\varepsilon(y)\chi(y)\Delta(y^{-1})\,d \lambda(y)\to u_B
   \]
    (where $\Delta$ is the modular function) because $\chi(e)=\Delta(e)=1$ (where $e$ is the unit of $G$). Moreover $u_\varepsilon\to u$ in $L^p(G,\mu_\chi)$ and $\nabla u_\varepsilon=\varphi_\varepsilon\ast \nabla u\to \nabla u$ in $L^p(G,\mu_\chi)$. Therefore we have the desired conclusion. 
\end{proof}
\subsection{Local doubling property}
We collect here some auxiliary results related to the local doubling property of $\mu_\chi$ (cf. \eqref{eq loc doubl}).
We begin with the following fact.
\begin{proposition}\label{pro cont} For every $x_0\in G$, the function $r\mapsto \mu_\chi(B(x_0,r))$ is continuous on $(0,\infty)$. 
\end{proposition}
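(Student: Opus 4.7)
The plan is to reduce the claim by left-invariance to continuity of $V(r) := \mu_\chi(B(e, r))$ at every $r > 0$, and then to prove that every Carnot--Carath\'eodory sphere has $\mu_\chi$-measure zero. Since $d_C$ is left-invariant we have $B(x_0, r) = x_0 B(e, r)$; combined with the relative invariance formula $\mu_\chi(x_0 E) = \chi(x_0)\Delta(x_0^{-1})\mu_\chi(E)$ recalled in Section~\ref{sec prel}, this yields $\mu_\chi(B(x_0, r)) = \chi(x_0)\Delta(x_0^{-1})V(r)$, so continuity in $r$ at an arbitrary $x_0$ is equivalent to continuity of $V$.

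The function $V$ is non-decreasing. Left-continuity at each $r > 0$ follows from monotone convergence on the increasing union $B(e, r) = \bigcup_n B(e, r - 1/n)$. For right-continuity one uses $\bigcap_n B(e, r + 1/n) = \overline{B(e, r)}$ (valid because $d_C$ induces the manifold topology and balls are relatively compact, as recalled in Section~\ref{sec prel}) to obtain $V(r^+) - V(r) = \mu_\chi(S(e, r))$, where $S(e, r) := \{y \in G : d_C(e, y) = r\}$. Thus the proposition reduces to the claim that $\mu_\chi(S(e, r)) = 0$ for every $r > 0$.

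To establish this I would argue by contradiction. Suppose $\mu_\chi(S(e, r_0)) > 0$ for some $r_0 > 0$. The local doubling property of $\mu_\chi$ (cf.~\eqref{eq loc doubl}) yields the Lebesgue differentiation theorem, so $\mu_\chi$-a.e.\ point $y_0 \in S(e, r_0)$ is a point of density $1$ of $S(e, r_0)$; at such a $y_0$ the $1$-Lipschitz function $f(y) := d_C(e, y)$ is essentially equal to $r_0$ on every small ball $B(y_0, \delta)$. The main obstacle is now to derive a contradiction from this quasi-constancy. My approach would be to combine the sub-Riemannian Rademacher theorem (Pansu's theorem) with the sub-Riemannian eikonal equation $|\nabla f(y)| = 1$ for $\mu_\chi$-a.e.\ $y$ (where $\nabla$ is the horizontal gradient of Section~\ref{sec prel}): at a point $y_0$ that is simultaneously a density point of $S(e, r_0)$, a Pansu-differentiability point of $f$, and a point where the eikonal equation holds, the density condition forces $\nabla f(y_0) = 0$ while the eikonal equation forces $|\nabla f(y_0)| = 1$, giving the contradiction. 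A more self-contained alternative, if the invocation of Pansu--Rademacher and the eikonal at this level of generality proves unwieldy, would be to use the Poincar\'e inequality (Theorem~\ref{eq teo poincare}) on $B(y_0, \delta)$ to promote ``$\nabla f \equiv 0$ a.e.'' (itself a consequence of the density condition, obtained by testing against one-parameter subgroups $t \mapsto y_0 \exp(t X_j(e))$) to ``$f \equiv r_0$ on $B(y_0, \delta)$'', and then to rule out $B(y_0, \delta) \subset S(e, r_0)$ by extending a length-minimizing horizontal geodesic from $e$ to $y_0$ past $y_0$ into a region where $d_C(e, \cdot) > r_0$.
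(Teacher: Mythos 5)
Your reduction of the claim to ``$\mu_\chi(S(e,r))=0$'' and the invocation of the differentiation theorem at a density point of the sphere are both on the right track and match the paper's opening moves. But the way you then try to extract a contradiction diverges from the paper's argument, and both of your routes have genuine problems in this generality.

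Your primary route invokes a Pansu--Rademacher theorem plus an a.e.\ eikonal equation $|\nabla f|=1$ for $f=\mathrm{d}_C(e,\cdot)$. Pansu's differentiability theorem is a theorem about Carnot groups; a general connected Lie group $G$ with a left-invariant H\"ormander system is not graded, and the tangent-cone substitutes (Margulis--Mostow) do not yield the clean ``$\nabla f(y_0)=0$ at a density point vs.\ $|\nabla f(y_0)|=1$'' dichotomy without substantial additional work. So this route is not self-contained at the level of generality of the proposition. Your fallback route has a sharper gap: density $1$ of $S(e,r_0)$ at the single point $y_0$ does \emph{not} give $\nabla f\equiv 0$ a.e.\ on a whole ball $B(y_0,\delta)$; at best it gives $\nabla f=0$ a.e.\ on $S(e,r_0)\cap B(y_0,\delta)$ (a standard level-set fact), together with $\mu_\chi\bigl(B(y_0,\delta)\setminus S(e,r_0)\bigr)$ small for suitable small $\delta$. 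Feeding that into Poincar\'e only yields that $f$ is \emph{approximately} constant on $B(y_0,\delta)$, with an error controlled by $\mu_\chi(B\setminus S)^{1/p}$, not that $f\equiv r_0$. There is no way to upgrade ``approximately constant'' to ``$B(y_0,\delta)\subset S(e,r_0)$''. (Also, even if one had exact constancy, the clean contradiction is obtained by inspecting the minimizing geodesic from $e$ to $y_0$ \emph{before} $y_0$, not by extending it past $y_0$: in sub-Riemannian geometry extensions need not exist, need not remain minimizing, and ${\rm d}_C(e,\cdot)$ need not increase along them.)

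The paper's proof avoids gradients and differentiability entirely. It uses the quantitative density statement from the differentiation theorem (density arbitrarily close to $1$ at some scale $r_\delta$), picks a point $z_0$ \emph{on the geodesic from $x_0$ to $y_0$} at distance $r_\delta/2$ inside the ball --- so ${\rm d}_C(x_0,z_0)=R-r_\delta/2$ --- and then uses local doubling to show that the small ball $B(z_0,r_\delta/4)$ has measure at least a fixed fraction $C_0$ of $\mu_\chi(B(y_0,r_\delta))$. Once $\delta>1-C_0$, the density statement forces $E_R$ to meet $B(z_0,r_\delta/4)$, producing a point $v_0\in E_R$ with ${\rm d}_C(x_0,v_0)<R$, a contradiction to $v_0\in E_R$. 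This is more elementary than your proposals, works for any H\"ormander system on any connected Lie group, and needs nothing but local doubling, the Vitali-type differentiation theorem, and the triangle inequality. You would do well to replace your gradient-based attempt by this geometric pigeonhole argument.
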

\begin{proof}
By monotone convergence and since the balls have finite measure, the possible jump of the function in the statement, at $r=R$, equals $\mu_\chi(E_R)$, where $E_R:=\{y\in G:\ {\rm d}_C(x_0,y)=R\}$. Hence we have to prove that $\mu_\chi(E_R)=0$. 

To this end, we first observe that in the present setting the differentiation theorem holds true: if $f\in L^1_{\rm loc}(G,\mu_\chi)$,
\begin{equation}\label{eq diff}
\lim_{r\to0}\fint_{B(x,r)}f\,d\mu_\chi=f(x) \qquad a.e.\ x\in G.
\end{equation}
This can be seen by a standard argument. Namely, it is clear that this holds if $f$ is continuous. The extension to $f\in L^1_{\rm loc}(G,\mu_\chi)$ follows by the density of the continuous functions with compact support in $L^1(G,\mu_\chi)$ ($\mu_\chi$ is a Radon measure), and the weak type inequality 
\[
\mu_\chi(\{x\in G:\ M_1 f(x)>t\})\leq \frac{C}{t}\|f\|_{L^1(G,\mu_\chi)} \qquad t>0
\]
 for the {\it restricted} centered maximal operator 
\[
M_1 f(x):=\sup_{0<r\leq 1} \fint_{B(x,r)}|f|\,d\mu_\chi.
\]
 The latter inequality is in turn a consequence of the local doubling property of $\mu_\chi$ and the Vitali covering argument; see e.g. \cite[Theorem 3]{stromberg}.  
 
 Now suppose, by contradiction, that $\mu_\chi(E_R)>0$. By applying \eqref{eq diff} to $f=\chi_{E_R}$  we see that there exists $y_0\in E_R$ with the following property: for every $\delta\in(0,1)$ there is $0<r_\delta<\min\{R,1\}$ such that
 \begin{equation}\label{eq rdelta1}
 \mu_\chi(E_R\cap B(y_0,r_\delta))\geq \delta \mu_\chi(B(y_0,r_\delta)).
 \end{equation}
 Let $\gamma$ be a geodesic joining $x_0$ and $y_0$, hence $L(\gamma)=R$. By the continuity of the Carnot–Carath\'eodory distance there exists a point $z_0$ on this curve such that ${\rm d}_C(z_0,y_0)=r_\delta/2$. Clearly, ${\rm d}_C(x_0,z_0)=R-r_\delta/2$, because $\gamma$ is a geodesic. Now, we have $B(z_0,r_\delta/4)\subset B(y_0,r_\delta)$ and
 \begin{equation} \label{eq rdelta2}
 \mu_\chi(B(z_0,r_\delta/4))\geq C_0 \mu_\chi(B(z_0,2r_\delta)) \geq C_0\mu_\chi(B(y_0,r_\delta))
 \end{equation}
 where $C_0=D(1)^{-3}$, cf. \eqref{eq loc doubl}. 
 
 Choosing $\delta>1-C_0$ we see from \eqref{eq rdelta1} and \eqref{eq rdelta2} that $E_R\cap B(z_0,r_\delta/4)\not=\emptyset$. Let $v_0\in E_R\cap B(z_0,r_\delta/4)$. Then 
 \[
 R={\rm d}_C(x_0,v_0)\leq {\rm d}_C(x_0,z_0)+{\rm d}_C(z_0,v_0)< R-\frac{r_\delta}{2}+\frac{r_\delta}{4}<R,
 \]
 which is a contradiction. 
\end{proof}
\begin{proposition}\label{pro cov} For every $R>0$ there exists a constant $N=N_R>0$ such that, for every $r\in (0,R]$, there exists a covering of $G$ made of balls $B_j$ of radius $r$ and such that $\sum_j \chi_{B_j}\leq N$. 
\end{proposition}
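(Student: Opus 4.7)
The plan is a classical Vitali-type selection argument exploiting the local doubling property \eqref{eq loc doubl}. Recall that $G$, being a connected Lie group, is second countable and $\sigma$-compact, and that every ball has positive finite $\mu_\chi$-measure.

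First I would select, via Zorn's lemma, a maximal family of points $\{x_j\}_{j\in J}\subset G$ such that the half-radius balls $B(x_j,r/2)$ are pairwise disjoint. The family $J$ is automatically countable: the $B(x_j,r/2)$ are disjoint and have positive measure, while $\mu_\chi$ is $\sigma$-finite. Maximality forces the collection $\{B(x_j,r)\}_{j\in J}$ to cover $G$, since otherwise there would exist $x\in G$ with $B(x,r/2)$ disjoint from every $B(x_j,r/2)$, contradicting maximality of the chosen family.

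Next I would bound the multiplicity of the cover. Fix $z\in G$ and set $J_z=\{j\in J: z\in B(x_j,r)\}$. For every $j\in J_z$ one has $x_j\in B(z,r)$, so the triangle inequality yields the two inclusions
\[
B(x_j,r/2)\subset B(z,3r/2)\subset B(x_j,5r/2).
\]
Disjointness together with the first inclusion gives
\[
\sum_{j\in J_z}\mu_\chi(B(x_j,r/2))\leq \mu_\chi(B(z,3r/2)),
\]
while the second inclusion, combined with three iterations of \eqref{eq loc doubl} (legitimate since all radii involved stay below $4R$), gives, for every single $j\in J_z$,
\[
\mu_\chi(B(z,3r/2))\leq \mu_\chi(B(x_j,5r/2))\leq D(4R)^3\,\mu_\chi(B(x_j,r/2)).
\]
Combining the two displays yields $|J_z|\leq D(4R)^3$, uniformly in $z\in G$ and $r\in(0,R]$, so one may take $N_R=D(4R)^3$.

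No real obstacle arises beyond this routine bookkeeping; the one point to watch is that the local doubling constant must be taken at scale $4R$ rather than $R$, because three doublings from $r/2$ are required to reach $5r/2$. A slightly sharper choice of intermediate radii would let one invoke $D(R')$ for a smaller $R'$, but this refinement is irrelevant for the qualitative statement sought.
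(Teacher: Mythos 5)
Your proof is correct and uses the same maximal--disjoint--family construction, but the multiplicity bound follows a genuinely different route. The paper exploits the group structure: since both the Carnot--Carath\'eodory distance and the \emph{left Haar measure} $\lambda$ are left-invariant, $\lambda(B(x,\rho))$ does not depend on the centre $x$, so from the single inclusion $B(x_j,r/2)\subset B(\overline{x},2r)$ (for each ball $B_j\ni\overline{x}$) and two doublings of $\lambda$ one gets
\[
\lambda(B(x_j,r/2))\ \ge\ c_R\,\lambda(B(x_j,2r))\ =\ c_R\,\lambda(B(\overline{x},2r)),
\]
whence $N c_R\le 1$ immediately. You instead ran the textbook Vitali bookkeeping with the measure $\mu_\chi$ (which is only relatively invariant, not left-invariant), needing the double inclusion $B(x_j,r/2)\subset B(z,3r/2)\subset B(x_j,5r/2)$ and three applications of local doubling of $\mu_\chi$. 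Both routes deliver the statement; the paper's trick is slicker because it leans on the left-invariance of $\lambda$, while your argument is the one that survives in an arbitrary locally doubling metric measure space with no group symmetry available. One cosmetic remark: your last doubling starts from radius $2r\le 2R$, so $D(2R)^3$ already suffices; quoting $D(4R)^3$ is safe (by monotonicity of $R\mapsto D(R)$) but not needed.
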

\begin{proof}
Consider a maximal family of pairwise disjoint balls $B(x_j,r/2)$. Then the balls $B_j:=B(x_j,r)$ cover $G$. If $\overline{x}$ belongs to, say, $B(x_j,r)$, with $j=1,\ldots, N$, then $B(x_j,r/2)\subset B(\overline{x},2r)$. On the other hand, since the left Haar measure $\lambda$ is locally doubling ($\lambda=\Delta\mu$, where $\Delta$ is the modular function) we have 
\[
\lambda(B(x_j,r/2))\geq c_R  \lambda(B(x_j,2r))= c_R  \lambda(B(\overline{x},2r)),
\]
and therefore $Nc_R\leq 1$. 
\end{proof}
\subsection{Muckenhoupt property} We prove some properties of weights in the local Muckenhoupt class $A_{\infty,{\rm loc}}^{\alpha}(G,{\rm d}_C,\mu_\chi)$, as defined in Definition \ref{def muck}. This complements the analysis in \cite{bruno_calzi}. 

The following  basic fact will be crucial in the proof of our main result.
\begin{lemma}\label{lemma lem1}
    Let $\alpha\in(0,1)$. For every $w\in A_{\infty,{\rm loc}}^{\alpha}(G,{\rm d}_C,\mu_\chi)$ we have  
    \[
\mu_\chi\Big(\Big\{x\in B:\ w(x)\geq \alpha \fint_B w\,d\mu_\chi \Big\}\Big)\geq \alpha\mu_\chi(B)
    \]
    for every ball $B$ of radius $r\in (0,1]$. 
\end{lemma}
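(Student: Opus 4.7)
My plan is to argue by contradiction via the complementary set. Set $F := \{x \in B : w(x) \geq \alpha w_B\}$ and $E := B\setminus F = \{x \in B : w(x) < \alpha w_B\}$. The goal becomes proving $\mu_\chi(F)\geq \alpha\mu_\chi(B)$, so I would suppose instead that $\mu_\chi(F) < \alpha\mu_\chi(B)$, which immediately yields $\mu_\chi(E) > (1-\alpha)\mu_\chi(B)$. In particular, $\mu_\chi(E)\geq (1-\alpha)\mu_\chi(B)$, so since $B$ has radius $r\in(0,1]$, the defining implication \eqref{eq muck} of $A_{\infty,\mathrm{loc}}^\alpha$ applies to $E$ and gives
\[
\int_E w\,d\mu_\chi \;\geq\; \alpha\int_B w\,d\mu_\chi \;=\; \alpha\, w_B\,\mu_\chi(B).
\]

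Next I would derive a matching upper bound from the definition of $E$. Pointwise on $E$ we have the strict inequality $w(x) < \alpha w_B$, and since $\mu_\chi(E) > (1-\alpha)\mu_\chi(B) > 0$, integrating gives the strict bound
\[
\int_E w\,d\mu_\chi \;<\; \alpha\, w_B\, \mu_\chi(E) \;\leq\; \alpha\, w_B\, \mu_\chi(B).
\]
Combining the two displays produces $\alpha w_B\mu_\chi(B) < \alpha w_B\mu_\chi(B)$, a contradiction. Note that $w_B > 0$ because $w > 0$ almost everywhere and $0 < \mu_\chi(B) < \infty$, so $\alpha w_B > 0$ and the contradiction is genuine.

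There is essentially no obstacle: the lemma is an almost immediate consequence of the definition of $A_{\infty,\mathrm{loc}}^\alpha$ once one passes to the complementary set. The only point to be careful about is making the integral inequality $\int_E w < \alpha w_B\mu_\chi(E)$ strict, which is why I explicitly record $\mu_\chi(E) > 0$; this follows from $(1-\alpha)\mu_\chi(B) > 0$ under the standing assumption $\alpha\in(0,1)$.
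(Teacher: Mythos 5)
Your proof is correct and follows essentially the same reasoning as the paper's: both pass to the complementary set $E=\{x\in B: w(x)<\alpha w_B\}$, invoke the defining implication of $A_{\infty,\mathrm{loc}}^\alpha$ to bound $\int_E w\,d\mu_\chi$ from below, and contrast this with the pointwise bound $w<\alpha w_B$ on $E$ (the paper compresses the two displays into a single chain of inequalities). Your explicit remark that $\mu_\chi(E)>0$ is needed for the strict integral inequality is sound and is used implicitly in the paper's argument as well.
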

\begin{proof}
If the set
\[
E:=\Big\{x\in B:\ w(x)< \alpha \fint_B w\,d\mu_\chi \Big\}
\]
had measure $\mu_\chi(E)> (1-\alpha) \mu_\chi(B)$, by \eqref{eq muck} we would have   
\[
\alpha\geq \frac{\alpha\mu_\chi(E)}{\mu_\chi(B)}> \frac{\int_{E}w\, d\mu_\chi}{\int_B w\, d\mu_\chi}\geq\alpha,
\]
which is a contradiction.
\end{proof}
The following result completes the analysis in \cite{bruno_calzi} about the relationships between several definitions of the local Muckenhoupt class $A_{\infty,{\rm loc}}(G,{\rm d}_C,\mu_\chi)$ and provides, indeed, a characterization analogous to that valid in the Euclidean case (cf. \cite[Theorem 9.3.3]{grafakos}). 

For $R>0$, let $\mathcal{B}_R$ be the collection of open balls of radius $R$ in $(G,{\rm d}_C)$. For $p\in (1,\infty)$, we denote by $p'=p/(p-1)$ its conjugate exponent. 

\begin{proposition}  \label{pro equivalenze} Let $w\in L^1_{\rm loc}(G,\mu_\chi)$, with $w>0$ almost everywhere. Let $R>0$. The following conditions are equivalent.
\begin{itemize}
\item[(1)]There are $\varepsilon,\delta\in(0,1)$ such that for every ball $B\in\mathcal{B}_R$ and every Borel subset $F\subset B$,
\[
\mu_\chi(F)\leq \varepsilon\mu_\chi(B) \Longrightarrow \int_F w\,d\mu_\chi \leq \delta \int_B w\,d\mu_\chi.
\]
\item[(2)] There are $p\in(1,\infty)$ and $C>0$ such that, for every ball $B\in\mathcal{B}_R$ and every Borel subset $F\subset B$,
\[
\frac{ \int_F w\,d\mu_\chi}{\int_B w\,d\mu_\chi}\geq C\Big(\frac{\mu_\chi(F)}{\mu_\chi(B)}\Big)^p.
\]
\item[(3)] There are $p\in(1,\infty)$ and $C>0$ such that, for every ball $B\in\mathcal{B}_R$,
\[
\Big(\fint_B w\,d\mu_\chi\Big)\Big(\fint_B w^{-p'/p}d\mu_\chi\Big)^{p/p'}\leq C.
\]
\item[(4)] There are $\delta,c>0$ such that, for every ball $B\in\mathcal{B}_R$,
\[
\mu_\chi\Big(\Big\{x\in G:\ w(x)\geq \delta \fint_B w\,d\mu_\chi\Big\}\Big)\geq c\mu_\chi(B).
\]
\item[(5)] $w\in A_{\infty,{\rm loc}}(G,{\rm d}_C,\mu_\chi)$.
\end{itemize}  
More precisely, in the implication $(1)\Rightarrow (2)$ one can choose the constants $p,C$ in $(2)$ depending only on $R$ and the constants $\varepsilon,\delta$ in $(1)$, and similarly for all the other implications. 

In particular, if one of the conditions $(1),\ (2),\ (3),\ (4)$ holds for some $R$ then all these conditions hold for every $R$. 
\end{proposition}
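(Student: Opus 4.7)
The plan is to cycle through the implications $(5)\Rightarrow (1)\Rightarrow (2)\Rightarrow (3)\Rightarrow (4)\Rightarrow (5)$, in the spirit of the Euclidean characterization \cite[Theorem 9.3.3]{grafakos}, while keeping track of the dependence of all constants on the scale $R$ through the local doubling function $D(R)$ of \eqref{eq loc doubl}. The equivalence $(5)\Leftrightarrow (1)$ (for $R=1$) is immediate by complementation in Definition \ref{def muck}: writing $F=B\setminus E$, \eqref{eq muck} becomes $\mu_\chi(F)\le \alpha\mu_\chi(B)\Rightarrow \int_F w\,d\mu_\chi\le (1-\alpha)\int_B w\,d\mu_\chi$, so one takes $\varepsilon=\alpha$, $\delta=1-\alpha$ in one direction and any $\alpha<\min\{\varepsilon,1-\delta\}$ in the other.

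For $(1)\Rightarrow (2)$ I would iterate (1): inside $B\in\mathcal{B}_R$ one builds a nested Borel chain $F\subset G_k\subset \cdots\subset G_1\subset B$ with $\mu_\chi(G_j)=\varepsilon\mu_\chi(G_{j-1})$, which is possible because $\mu_\chi$ is atomless (Proposition \ref{pro cont}); applying (1) at each step and optimising $k$ with $\varepsilon^{k+1}<\mu_\chi(F)/\mu_\chi(B)\le \varepsilon^k$ yields (2) with $p-1=\log(1/\delta)/\log(1/\varepsilon)$. For $(2)\Rightarrow (3)$ I would apply (2) to the level sets $F_s=\{x\in B:w(x)\le s\}$: the trivial bound $\int_{F_s}w\,d\mu_\chi\le s\mu_\chi(F_s)$ combined with (2) gives the distributional estimate $\mu_\chi(F_s)\le C(s/w_B)^{1/(p-1)}\mu_\chi(B)$, and a layer-cake integration of $w^{-q'/q}$ then produces (3) for any exponent $q$ large enough that $q'/q<1/(p-1)$.

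The step $(3)\Rightarrow (4)$ is a H\"older argument: for any Borel $S\subset B$, applying H\"older to $\mu_\chi(S)=\int_S w^{1/q}w^{-1/q}\,d\mu_\chi$ together with (3) gives
\begin{equation*}
\Big(\frac{\mu_\chi(S)}{\mu_\chi(B)}\Big)^{q}\le C\,\frac{\int_S w\,d\mu_\chi}{\int_B w\,d\mu_\chi}.
\end{equation*}
Taking $S=\{x\in B:w(x)<\delta w_B\}$ and using $\int_S w\,d\mu_\chi<\delta w_B\mu_\chi(S)$ yields $\mu_\chi(S)/\mu_\chi(B)\le (C\delta)^{1/(q-1)}$, which is (4) for $\delta$ small. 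Finally $(4)\Rightarrow (1)$ is a direct measure comparison: a subset $F\subset B$ with $\mu_\chi(F)\le \varepsilon\mu_\chi(B)<c\mu_\chi(B)$ misses most of the set $\{x\in B:w(x)\ge \delta w_B\}$, whence $\int_F w\,d\mu_\chi\le (1-\delta(c-\varepsilon))\int_B w\,d\mu_\chi$, that is, (1).

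The final clause, that any of (1)--(4) for one $R$ forces all of them for every $R$, is a standard covering argument based on Proposition \ref{pro cov} and local doubling \eqref{eq loc doubl}: a ball of radius $R'$ is comparable, in $\mu_\chi$-measure, to a finite union of balls of radius $R$ with bounded overlap, so the estimates transfer across scales. The main obstacle is not conceptual but rather the careful bookkeeping of how the constants propagate through the five-step cycle and through $D(R)$; in particular, in the layer-cake step $(2)\Rightarrow(3)$ the exponent $q$ must be chosen compatibly with the $p$ produced by the iteration, and in $(1)\Rightarrow (2)$ the atomlessness of $\mu_\chi$ (supplied by Proposition \ref{pro cont}) is essential to build the nested chain.
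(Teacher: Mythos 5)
Your plan of a self-contained cycle $(5)\Rightarrow(1)\Rightarrow(2)\Rightarrow(3)\Rightarrow(4)\Rightarrow(5)$ is structurally different from the paper, which cites \cite[Proposition 6.1]{bruno_calzi} for $(3)\Rightarrow(2)\Rightarrow(1)$, proves $(1)\Rightarrow(4)$ directly via the Chebyshev argument of Lemma~\ref{lemma lem1}, and cites \cite[Propositions 6.3 and 6.5]{bruno_calzi} for $(4)\Rightarrow(3)$ (which is where the continuity result, Proposition~\ref{pro cont}, is crucially fed in). The steps $(5)\Leftrightarrow(1)$, $(2)\Rightarrow(3)$ (distributional estimate plus layer-cake), $(3)\Rightarrow(4)$ (H\"older), and $(4)\Rightarrow(1)$ (measure comparison) in your sketch are all sound and essentially match the easy implications.

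The gap is in $(1)\Rightarrow(2)$, and it is not a bookkeeping issue. Condition $(1)$ is a statement about the pair (Borel subset, \emph{ball} in $\mathcal{B}_R$); once you pass to the nested chain $F\subset G_k\subset\cdots\subset G_1\subset B$, the sets $G_1,\dots,G_k$ are arbitrary Borel sets, not balls, so $(1)$ says nothing about the pair $(G_{j+1},G_j)$ for $j\geq 1$. Your iteration only produces the single estimate $\int_{G_1}w\,d\mu_\chi\leq\delta\int_B w\,d\mu_\chi$, with no way to descend further. This is not incidental: the passage from the $A_\infty$-type condition $(1)$ to the power-law condition $(2)$ (equivalently, to a reverse H\"older inequality) is exactly the deep self-improvement at the heart of Muckenhoupt theory, and in both the Euclidean case (Grafakos, Theorem 9.3.3) and the present local metric-measure setting (the \cite{bruno_calzi} propositions the paper invokes) it requires a Calder\'on--Zygmund-type stopping-time or good-$\lambda$ argument, combined here with the local doubling property and the continuity of $r\mapsto\mu_\chi(B(x_0,r))$. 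Without supplying that machinery, the cycle does not close.

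A secondary (but real) concern is the transfer of the conditions across scales $R$: for condition $(1)$ a bounded-overlap covering of a ball of radius $R'$ by balls of radius $R$ does not in general propagate the implication, because a subset $F\subset B'$ with $\mu_\chi(F)/\mu_\chi(B')$ small may occupy a large fraction of an individual sub-ball $B_i$. The paper sidesteps this by transferring the condition in its reverse-H\"older form $(3)$, which is additive over coverings; this is another reason the hard implication is routed through~$(3)$.
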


\begin{proof}
The desired equivalences were proved in \cite{bruno_calzi}, except for the implication $(4)\Rightarrow (3)$, that is new and whose proof makes use of Proposition \ref{pro cont} above. For the sake of clarity we give precise references. 

It was proved in \cite[Proposition 6.1]{bruno_calzi} that $(3)\Rightarrow (2)\Rightarrow (1)$. On the other hand, the proof of Proposition \ref{lemma lem1} (taking $B\in\mathcal{B}_R$) shows that $(1)\Rightarrow (4)$. Also, in \cite[Propositions 6.3 and 6.5]{bruno_calzi} it was proved that if $(4)$ holds for some $R>0$ then $(3)$ holds \textit{for all $R$}, provided that the function $r\mapsto \mu_\chi(B(x_0,r))$ is continuous on $(0,R']$ for some $R'>0$. On the other hand we know that this latter condition is always verified, by Proposition \ref{pro cont}. Hence, if one of the conditions (1), (2), (3), (4) holds for some $R$ then all those conditions hold for every $R>0$. 

Finally, $(5)$ is clearly equivalent to $(1)$ for $R=1$.

The remark about the dependence of the constants follows by an inspection of the proofs of the aforementioned results of \cite{bruno_calzi}.

We observe that the results of \cite{bruno_calzi} were actually proved when $\chi$ is the modular function of $G$, hence $\mu_\chi$ is a left Haar measure. However, an inspection of those proofs reveals that they extend with obvious changes to every measure of the type $\mu_\chi$. 
\end{proof}
\begin{corollary}\label{pro loc doubl}
Let $\alpha\in (0,1)$. For every $R>0$ there exists a constant $C=C(\alpha,R)$ such that, for every $w\in A^{\alpha}_{\infty,{\rm loc}}(G,{\rm d}_C,\mu_\chi)$ we have 
\[
\int_{2B} w\, d\mu_\chi\leq C \int_{B} w\, d\mu_\chi
\]
for all balls $B$ of radius $r\in (0,R]$.
 
\end{corollary}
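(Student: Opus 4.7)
The plan is to deduce the doubling inequality from the equivalent characterization (2) in Proposition \ref{pro equivalenze} combined with the local doubling of $\mu_\chi$ itself, so that no new work is really needed beyond invoking the machinery already set up.

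First I would observe that the membership $w\in A^\alpha_{\infty,\mathrm{loc}}(G,\mathrm{d}_C,\mu_\chi)$ implies, by taking complements of Borel subsets inside the defining implication \eqref{eq muck}, that condition (1) of Proposition \ref{pro equivalenze} holds at scale $R=1$ with constants $\varepsilon=\alpha$, $\delta=1-\alpha$. By the equivalences in that proposition, condition (2) then holds for every radius, and in particular on the class $\mathcal{B}_{2R}$ of balls of radius at most $2R$: there exist $p\in(1,\infty)$ and $c>0$, depending only on $\alpha$ and $R$, such that
\[
\frac{\int_F w\,d\mu_\chi}{\int_{B'} w\,d\mu_\chi}\geq c\Big(\frac{\mu_\chi(F)}{\mu_\chi(B')}\Big)^p
\]
for every ball $B'$ of radius $\leq 2R$ and every Borel subset $F\subset B'$.

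Second, given a ball $B$ of radius $r\in(0,R]$, I would apply the above inequality to $B':=2B$ (of radius $2r\leq 2R$) and $F:=B$. This yields
\[
\frac{\int_B w\,d\mu_\chi}{\int_{2B} w\,d\mu_\chi}\geq c\Big(\frac{\mu_\chi(B)}{\mu_\chi(2B)}\Big)^p.
\]
Finally, the local doubling property \eqref{eq loc doubl} of $\mu_\chi$ gives $\mu_\chi(2B)\leq D(2R)\mu_\chi(B)$, so $\mu_\chi(B)/\mu_\chi(2B)\geq D(2R)^{-1}$ and thus
\[
\int_{2B} w\,d\mu_\chi\leq c^{-1}D(2R)^{p}\int_B w\,d\mu_\chi,
\]
which is the claimed bound with $C=c^{-1}D(2R)^{p}$ depending only on $\alpha$ and $R$.

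There is no real obstacle here: the whole argument reduces to an application of a single characterization (implication $(1)\Rightarrow(2)$ of Proposition \ref{pro equivalenze}) together with the local doubling of $\mu_\chi$. The only point to check carefully is that the constants $p$ and $c$ in condition (2) can indeed be taken uniform over balls of radius $\leq 2R$ (with dependence only on $\alpha$ and $R$), which is guaranteed by the remark on the dependence of the constants stated at the end of Proposition \ref{pro equivalenze}.
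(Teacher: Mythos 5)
Your argument is correct and is essentially the paper's approach: the paper simply states "This result follows from Proposition \ref{pro equivalenze}," and you spell out the most direct route through it, via condition (2). The chain you give — membership in $A^\alpha_{\infty,{\rm loc}}$ implies condition (1) by complementation with explicit constants $\varepsilon=\alpha$, $\delta=1-\alpha$, hence condition (2) holds for balls of radius at most $2R$ with constants depending only on $\alpha$ and $R$, and then applying (2) with $B'=2B$ and $F=B$ together with the local doubling of $\mu_\chi$ — is exactly what lies behind the one-line citation, and you correctly flag that the uniformity of the constants at scale $2R$, guaranteed by the remark at the end of Proposition \ref{pro equivalenze}, is the only point requiring attention.
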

\begin{proof}
This result follows from Proposition \ref{pro equivalenze}. Alternatively, it follows by combining Proposition \ref{pro cont} with \cite[Propositions 6.5 and 6.3]{bruno_calzi}. 
\end{proof}
For $r\in (0,1]$, let 
\begin{equation}\label{eq cr}
C_r(w)=\inf\Big\{\fint_{B} w\,d\mu_\chi:\ B\textrm{ is a ball of radius }r\Big\}.
\end{equation}
Observe that $C_1(w)=I_{\rm UP}(w)$ as defined in \eqref{eq iup}. 
\begin{proposition}\label{pro cr} Let $\alpha\in(0,1)$. There exist positive constants $M_1=M_1(\alpha)$, $\kappa_1=\kappa_1(\alpha)$ and $M_2,\kappa_2$ (depending only on the constant $D(1/2)$ in \eqref{eq loc doubl}) such that, for $0<r'\leq r\leq 1$ and $w\in A_{\infty,{\rm loc}}^{\alpha}(G,{\rm d}_C,\mu_\chi)$,
\[
M_1\Big(\frac{r}{r'}\Big)^{-\kappa_1}C_r(w)\leq C_{r'}(w)\leq M_2\Big(\frac{r}{r'}\Big)^{\kappa_2}C_r(w)
\]
\end{proposition}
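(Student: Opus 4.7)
The plan is to prove the two inequalities separately, both by comparing concentric balls and combining the Muckenhoupt hypothesis with the local doubling property of $\mu_\chi$.

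For the upper bound $C_{r'}(w)\le M_2(r/r')^{\kappa_2}C_r(w)$, fix $\varepsilon>0$ and pick a ball $B=B(x_0,r)$ with $\fint_B w\,d\mu_\chi\le(1+\varepsilon)C_r(w)$. Let $B'=B(x_0,r')$ be concentric of radius $r'$. Since $B'\subset B$, $\int_{B'}w\,d\mu_\chi\le\int_B w\,d\mu_\chi$, hence
\[
\fint_{B'}w\,d\mu_\chi\le\frac{\mu_\chi(B)}{\mu_\chi(B')}\fint_B w\,d\mu_\chi\le M_2(r/r')^{\kappa_2}\fint_B w\,d\mu_\chi,
\]
where the estimate $\mu_\chi(B)/\mu_\chi(B')\le M_2(r/r')^{\kappa_2}$ follows by iterating \eqref{eq loc doubl} about $\lceil\log_2(r/r')\rceil$ times; the resulting $\kappa_2,M_2$ depend only on the local doubling constant. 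Taking the infimum over $B$ and letting $\varepsilon\to 0$ gives $C_{r'}(w)\le M_2(r/r')^{\kappa_2}C_r(w)$. Note this half uses no Muckenhoupt hypothesis, consistent with the stated dependence of $M_2,\kappa_2$.

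For the lower bound, equivalently $C_r(w)\le M_1^{-1}(r/r')^{\kappa_1}C_{r'}(w)$, the idea is dual: pick $B'=B(x_0,r')$ with $\fint_{B'}w\,d\mu_\chi\le(1+\varepsilon)C_{r'}(w)$ and set $B=B(x_0,r)$. Because $r\le 1$, Proposition \ref{pro equivalenze}, condition $(2)$, applies at scale $R=1$ and yields $p\in(1,\infty)$ and $c_\alpha>0$, depending only on $\alpha$, such that
\[
\frac{\int_{B'}w\,d\mu_\chi}{\int_B w\,d\mu_\chi}\ge c_\alpha\Big(\frac{\mu_\chi(B')}{\mu_\chi(B)}\Big)^{p}.
\]
Rewriting this in terms of averages and using the doubling estimate from the first step in the form $\mu_\chi(B)/\mu_\chi(B')\le M_2(r/r')^{\kappa_2}$, one obtains
\[
\fint_B w\,d\mu_\chi\le c_\alpha^{-1}\Big(\frac{\mu_\chi(B)}{\mu_\chi(B')}\Big)^{p-1}\fint_{B'}w\,d\mu_\chi\le c_\alpha^{-1}M_2^{p-1}(r/r')^{(p-1)\kappa_2}\fint_{B'}w\,d\mu_\chi.
\]
Passing to the infimum over $B$ (through the concentric choice) and letting $\varepsilon\to 0$ produces the claim with $\kappa_1=(p-1)\kappa_2$ and $M_1=c_\alpha M_2^{-(p-1)}$.

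The only nontrivial ingredient is the quantitative dependence of $p$ and $c_\alpha$ on $\alpha$ in Proposition \ref{pro equivalenze}$(2)$, which is explicitly guaranteed by the remark at the end of that proposition (the implication $(1)\Rightarrow(2)$ preserves the dependence of the constants on $\alpha$ once $R=1$ is fixed). Everything else reduces to iterating the local doubling property \eqref{eq loc doubl} at scales up to $1$, so no genuine obstacle appears; the proposition is essentially a compatibility statement packaging the $A_\infty$ characterization and doubling into a single quantitative scale change.
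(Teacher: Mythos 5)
Your proof is correct and takes essentially the same route as the paper: both halves compare concentric balls $B'=B(x_0,r')\subset B(x_0,r)=B$ and propagate the estimate across the dyadic scales separating $r'$ from $r$ by iterating a doubling inequality. For the upper bound you iterate the local doubling of $\mu_\chi$ from \eqref{eq loc doubl}, exactly as the paper does, and correctly note that no Muckenhoupt hypothesis is used there. For the lower bound the paper invokes Corollary \ref{pro loc doubl} (local doubling of the measure $w\,d\mu_\chi$) directly and iterates it, while you recover the same quantitative comparison from condition $(2)$ of Proposition \ref{pro equivalenze} together with the $\mu_\chi$-doubling bound; these are equivalent consequences of the local $A_\infty$ characterization, so the difference is merely one of which intermediate lemma is cited, not of strategy.
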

\begin{proof}
Let us prove the first inequality. Let $B'$ and $B$ be open balls of radius $r'$ and $r$, respectively, with the same center. We use Corollary \ref{pro loc doubl} with $R=1$, and denote by $C=C(\alpha,1)$ the constant that appears there. Let $n\geq0$ be the integer such that $2^n r'\leq r< 2^{n+1}r'$. 

By Corollary \ref{pro loc doubl},  for $w\in A_{\infty,{\rm loc}}^{\alpha}(G,{\rm d}_C,\mu_\chi)$,
\[
\int_B w\,d\mu_\chi \leq \int_{2^{n+1}B'} w\,d\mu_\chi\leq C^{n+1}\int_{B'}w\, d\mu_\chi.
\]
Let $\kappa_1$ be the smallest integer such that $C\leq 2^{\kappa_1}$ (since $C\geq 1$ we have $\kappa_1\geq0$). Then $C^{n+1}\leq (2^{n+1})^{\kappa_1}\leq 2^{\kappa_1} (r/r')^{\kappa_1}$ and therefore 
\[
\int_B w\,d\mu_\chi \leq 2^{\kappa_1} \Big(\frac{r}{r'}\Big)^{\kappa_1}\int_{B'}w\, d\mu_\chi.
\]
By dividing by $\mu_\chi(B)$ and using $\mu_\chi(B)\geq \mu_\chi(B')$ we obtain the first inequality. 

Concerning the second inequality we observe that, by \eqref{eq loc doubl} with $R=1/2$, denoting by $D=D(1/2)$ the constant that appears there, we have (with the same notation as above) 
\[
\frac{1}{\mu_\chi(B')}\leq \frac{1}{\mu_\chi(2^{-n-1} B)}\leq \frac{D^{n+1}}{\mu_\chi(B)}.
\]
Let $\kappa_2$ be the smallest integer such that $D\leq 2^{\kappa_2}$ (since $D\geq 1$ we have $\kappa_2\geq0$). Arguing as above we arrive at 
\[
\frac{1}{\mu_\chi(B')}\leq 2^{\kappa_2} \Big(\frac{r}{r'}\Big)^{\kappa_2} \frac{1}{\mu_\chi(B)}.
\]
 Multiplying by $\int_{B'} w\, d\mu_\chi$ and using $ \int_{B'} w\, d\mu_\chi\leq \int_{B} w\, d\mu_\chi$ gives the second inequality. 
\end{proof}

\subsection{The Schr\"odinger representation of the reduced Heisenberg group} \label{sec heisenberg} 
The reduced Heisenberg group can be realized as the product $\mathbb{H}^d:=\bR^d\times \bR^d \times \bR/2\pi \mathbb{Z}$ endowed with the product 
\[
(x,\omega,t)\cdot(x',\omega',t')=(x+x',\omega+\omega',t+t'-x\omega').
\]
It turns out to be a connected unimodular Lie group, and a Haar measure is just the Lebesgue measure (identifying $\mathbb{H}^d$ with $\bR^d \times \bR^d\times [0,2\pi)$). A basis of the vector space of left-invariant vector fields is given by
\[
X_j=\frac{\partial}{\partial x_j},\qquad \Omega_j=\frac{\partial}{\partial \omega_j}-x_j\frac{\partial}{\partial t},\qquad T=\frac{\partial}{\partial t}
\]
 for $j=1,\ldots,d$.
 
 The representation $\pi$ of $\mathbb{H}^{d}$ on $L^2(\bR^d)$ given by 
 \begin{equation}\label{eq pih}
 \pi(x,\omega,t) f(y)=e^{i t} e^{i \omega y} f(y-x),
 \end{equation}
 is the {\it Schr\"odinger representation} of $\mathbb{H}^{d}$. It is unitary, irreducible and square-integrable. Indeed, every $\varphi\in L^2(\bR^d)$, with $\|\varphi\|_{L^2}=1$, is admissible (cf. \eqref{eq admissibility}), since
 \[
 (2\pi)^{-(d+1)}\int_{\mathbb{H}^{d}} |\langle \varphi,\pi(x,\omega,t)\varphi \rangle |^2 dx\, d\omega\, dt=\|\varphi\|^4_{L^2}. 
 \]
 We refer to \cite[Chapter 17]{wong} for details. We observe that the function $\bR^{2d}\ni(x,\omega)\mapsto \langle f, \pi(x,\omega,0)\varphi\rangle$ is the so-called short-time Fourier transform of $f\in L^2(\bR^d)$ with window $\varphi$; see \cite{grochenig-book}. 
 
 In the following we will consider the class $A_{\infty,{\rm loc}}(\mathbb{H}^d)$ defined in Definition \ref{def muck}, with respect to the left-invariant Riemannian distance induced by the above vector fields, and the Haar measure. 
 
 \subsection{Euclidean Muckenhoupt class $A_\infty$} We recall the following definition of the $A_\infty$ class in $\bR^d$; cf. \cite[Chapter 9]{grafakos}. We denote by $|E|$ the Lebesgue measure of a set $E$. 
 
 \begin{definition}\label{def muck2}
 Let $\alpha\in(0,1)$. We denote by $A_{\infty}^{\alpha}(\bR^{d})$ the set of functions $w\in L^1_{\rm loc}(\bR^d)$, with $w>0$ almost everywhere, such that for all balls $B\subset\bR^d$ and all Lebesgue-measurable subsets $E\subset B$ we have 
\[
    |E|\geq (1-\alpha) |B| \Longrightarrow \int_E w(x)\, dx \geq \alpha \int_B w(x)\, dx.
\]
We also set $A_{\infty}(\bR^{d})=\cup_{0<\alpha<1}A_{\infty}^{\alpha}(\bR^{d})$. 
 \end{definition} 
 We will also use the notation $A_{\infty,{\rm loc}}^{\alpha}(\bR^{d})$ according to Definition \ref{def muck}, where $\bR^d$ is regarded as a metric measure space with the Euclidean distance and the Lebesgue measure. The following result will be useful later. 
 \begin{lemma}\label{lem equiv}
 For every $\alpha\in(0,1)$ there exists $\beta\in (0,1)$ such that, every $w\in A_{\infty,{\rm loc}}^\alpha(\bR^{2d})$, regarded as a function on $\mathbb{H}^d$ constant on the fibers, belongs to $A_{\infty,{\rm loc}}^\beta(\mathbb{H}^d)$.
 
 Moreover, for every  $r>0$ there exists $C=C(\alpha,r)>0$ such that for every $w$ as above, and $(x_0,\omega_0,t_0)\in\mathbb{H}^d$,
  \begin{align}\label{eq ineq2}
 C^{-1}\fint_{B((x_0,\omega_0),r)} w(x,\omega)\,dx\,d\omega&\leq \fint_{B((x_0,\omega_0,t_0),r)} w(x,\omega)\,dx\,d\omega\,dt\\
 &\leq C \fint_{B((x_0,\omega_0),r)} w(x,\omega)\,dx\,d\omega.\nonumber
 \end{align}
 
 \end{lemma}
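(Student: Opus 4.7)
The plan is to analyze both claims via a Fubini decomposition in the $t$-direction, exploiting that the lifted weight $w$ on $\mathbb{H}^d$ is independent of $t$. The geometric core of the argument is an explicit description of the shape of $B_h := B_{\mathbb{H}^d}((x_0,\omega_0,t_0),r)$ for small $r$. Since the horizontal frame $\{X_j,\Omega_j,T\}$ spans the tangent space at every point, the Carnot--Carath\'eodory distance is actually the left-invariant Riemannian distance making that frame orthonormal. At the identity this metric coincides with the standard Euclidean one in the coordinates $(x,\omega,t)$, so $d_C(e,(y,\eta,s))\asymp\sqrt{|y|^2+|\eta|^2+|s|^2}$ for $(y,\eta,s)$ close to $e$ (with $s$ in a fundamental domain of $\bR/2\pi\mathbb{Z}$). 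Combined with the left-translation formula $(x_0,\omega_0,t_0)^{-1}(x,\omega,t)=(x-x_0,\omega-\omega_0,t-t_0+x_0\cdot(\omega-\omega_0))$, this yields, for $r\leq r_0$ (a dimensional constant), that $B_h$ is comparable up to dimensional factors to a skewed region whose projection to $\bR^{2d}$ is the Euclidean ball $B_e:=B((x_0,\omega_0),cr)$ and whose $t$-section at each $(x,\omega)\in B_e$ is a single interval (not wrapping) of length $\ell(x,\omega)=2\sqrt{c^2 r^2-|x-x_0|^2-|\omega-\omega_0|^2}$.

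For the inequalities \eqref{eq ineq2} with $r\leq r_0$, Fubini in $t$ together with the bounds $\ell\leq 2cr$ everywhere and $\ell\geq cr\sqrt{3}$ on the half-ball $B':=B((x_0,\omega_0),cr/2)$ yield matching two-sided bounds for $\int_{B_h}w\,dx\,d\omega\,dt$ in terms of $\int_{B_e}w\,dx\,d\omega$ and $\int_{B'}w\,dx\,d\omega$. Combining with the volume identity $|B_h|_{\mathbb{H}^d}\asymp r\,|B_e|$ and invoking the Euclidean doubling property of the $A_\infty$ weight $w$ (see Corollary \ref{pro loc doubl}) to compare averages over $B_e$, $B'$ and $B((x_0,\omega_0),r)$ yields \eqref{eq ineq2}. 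For $r>r_0$ I would cover $B_h$ by a bounded number of balls of radius $r_0$ with bounded overlap via Proposition \ref{pro cov}, reducing to the small-ball case at the cost of an $r$-dependent constant.

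For the Muckenhoupt transfer, assume $E\subset B_h$ with $|E|_{\mathbb{H}^d}\geq(1-\beta)|B_h|_{\mathbb{H}^d}$ for some $r\leq r_0$, and let $h_E(x,\omega)$ denote the $t$-section length of $E$. Set
\[
\tilde F:=\{(x,\omega)\in B': h_E(x,\omega)\geq \tfrac{1}{2}\ell(x,\omega)\}.
\]
A Chebyshev argument, using $\int_{B_e}(\ell-h_E)\,dx\,d\omega=|B_h|_{\mathbb{H}^d}-|E|_{\mathbb{H}^d}\leq\beta|B_h|_{\mathbb{H}^d}\asymp\beta r|B_e|$ together with $\ell\geq cr\sqrt{3}$ on $B'$, gives $|B'\setminus\tilde F|\leq C_d\beta|B'|$. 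Choosing $\beta$ small enough that $C_d\beta\leq\alpha$, the Euclidean $A_{\infty,{\rm loc}}^\alpha$ property on $B'$ yields $\int_{\tilde F}w\,dx\,d\omega\geq\alpha\int_{B'}w\,dx\,d\omega$. Consequently
\[
\int_E w\geq\int_{\tilde F}w(x,\omega)\,\tfrac{1}{2}\ell(x,\omega)\,dx\,d\omega\geq \tfrac{cr\sqrt{3}}{2}\alpha\int_{B'}w\,dx\,d\omega,
\]
and combining with $\int_{B_h}w\leq 2cr\int_{B_e}w\leq Cr\int_{B'}w$ (again by doubling of $w$) gives $\int_E w\geq\beta'\int_{B_h}w$ for some $\beta'=\beta'(\alpha,d)>0$. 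Taking the output Muckenhoupt parameter to be $\min(\beta,\beta')$ completes the proof.

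The hard part will be the uniform geometric description of $B_h$: the $t$-section of $B_h$ at each $(x,\omega)$ is centered at $t_0-x_0(\omega-\omega_0)\bmod 2\pi$, a quantity sensitive to $x_0$ (which may be large) and prone to wrap modulo $2\pi$. Restricting to $r\leq r_0$ ensures the $t$-section is a single (non-wrapping) interval with length depending only on $|(x-x_0,\omega-\omega_0)|$, yielding clean bounds; the covering argument then extends \eqref{eq ineq2} to arbitrary $r$ at the cost of $r$-dependent constants.
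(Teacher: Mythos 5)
Your argument is correct but follows a genuinely different route than the paper. The geometric starting point is the same: for $r$ below some fixed $r_0$, the subRiemannian ball $B(e,r)\subset\mathbb{H}^d$ is sandwiched between simple Euclidean sets, left translation carries this to arbitrary centers, and $w$ is constant on the circle fibers, so everything decouples in the $t$-direction. The paper exploits this through Proposition~\ref{pro equivalenze}: it reduces to verifying the $A_p$-type condition (3), sandwiches $B(e,r)$ between the \emph{cylinders} $B'_{cr}\times(-cr,cr)\subset B(e,r)\subset B'_{Cr}\times(-Cr,Cr)$, and observes that on a product set both averages $\fint w$ and $\fint w^{-p'/p}$ project to the base, so condition (3) transfers immediately. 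You instead verify the definitional condition (1), which does not decouple so cleanly; this forces the Fubini-plus-Chebyshev analysis of $t$-section lengths to turn ``$E$ fills most of $B_h$'' into ``$E$ captures at least half of each fiber over a large part of a base ball'', before the Euclidean $A_\infty^\alpha$ hypothesis closes the argument. Yours is more elementary (no reverse-H\"older/$A_p$ bound appears) but longer, and you still need Proposition~\ref{pro equivalenze} to finish: your Chebyshev argument only yields condition (1) for balls of radius $r\leq r_0$, while Definition~\ref{def muck} requires it up to radius $1$; the upgrade from scale $r_0$ to scale $1$ is precisely what Proposition~\ref{pro equivalenze} provides and should be cited at this point, not only for \eqref{eq ineq2}. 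A cosmetic point: since $B_h$ is only \emph{sandwiched} between two Euclidean-type regions rather than equal to one, the exact formula you write for $\ell(x,\omega)$ should be replaced by two-sided bounds $\ell_1\leq\ell\leq\ell_2$ coming from the inner and outer radii; your estimates $\ell\leq 2cr$ everywhere and $\ell\gtrsim r$ on the half-ball rely only on these bounds, so the Chebyshev step survives unchanged.
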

 \begin{proof}
 The first statement follows by the following observations. By Proposition \ref{pro equivalenze} we can limit ourselves to check that the condition (3) in Proposition \ref{pro equivalenze} holds for balls of sufficiently small radius. It is also clear, since the class $A_{\infty,{\rm loc}}^\alpha(\bR^{2d})$ is translation invariant and $w$ is constant on the fibers, that by a left translation on $\mathbb{H}^d$ we can reduce ourselves to the case of balls centered at $e=(0,0,0)$ (the unit of $\mathbb{H}^d$). 
 
  Now, setting $B_r=B(e,r)\subset \mathbb{H}^d$ and $B'_r=B((0,0),r)\subset\bR^{2d}$ we easily see that there exist $r_0>0$, $c,C>0$ such that, for $r\leq r_0$,
 \[
 B'_{cr}\times (-cr,cr)\subset B_r\subset B'_{Cr}\times (-Cr,Cr).
 \]
(here $\mathbb{H}^d$ is identified with $\mathbb{R}^{2d}\times [-\pi,\pi)$, and we suppose $Cr_0<\pi$). Hence it is enough to check the condition (3) in Proposition \ref{pro equivalenze} with the balls $B=B_r$ replaced by $B'_{r}\times (-r,r)$,  with $r$ small enough, and the conclusion is then clear. 

The proof of the inequality \eqref{eq ineq2} is similar. Indeed, again by translation invariance of  the class $A_{\infty,{\rm loc}}^\alpha(\bR^{2d})$ and the fact the $w$ is constant along the fibers, it suffices to consider the balls $B_r$ and $B'_r$. Now, the above inclusion relationships (for $r=r_0$) give
\[
 \begin{aligned}
 2cr_0\int_{B'_{c{r_0}}} w(x,\omega)\,dx\,d\omega&\leq \int_{B_{r_0}} w(x,\omega)\,dx\,d\omega\,dt\\
 &\leq 2Cr_0 \int_{B'_{C{r_0}}} w(x,\omega)\,dx\,d\omega.\nonumber
 \end{aligned}
\]
The desired conclusion then follows by applying repeatedly Corollary \ref{pro loc doubl} (the measure of the balls that enters in the averages of $w$ can be absorbed in the multiplicative constant, that in 
\eqref{eq ineq2} is allowed to depend on $r$).
 \end{proof}
 \begin{remark}\label{rem covar}
 For future reference we notice the following elementary remarks, that are a consequence of the fact that the topology induced by the above Riemannian metric on $\mathbb{H}^d=\bR^{2d}\times\bR/2\pi\mathbb{Z}$ is the product topology and the projection $\mathbb{H}^d\to\bR^{2d}$ is a continuous and open group homomorphism. 
 \begin{itemize}
 \item[(a)] There exists $r_0\in(0,1]$ such that the projection on $\bR^{2d}$ of every ball $B((x_0,\omega_0,t_0),r_0)\subset\mathbb{H}^d$ is contained in the Euclidean ball $B((x_0,\omega_0),1)$.
 \item[(b)] There exists $r_1\in(0,1]$ such that the projection on $\bR^{2d}$ of every ball $B((x_0,\omega_0,t_0),1)\subset\mathbb{H}^d$ contains the Euclidean ball $B((x_0,\omega_0),r_1)$.
 \end{itemize}
 
 \end{remark}

\section{Proof of Theorem \ref{teomain}}
The following result can be regarded as a generalization of the ``Main Lemma" in \cite[page 146]{fefferman}, and reduces to it when $p=2$, $G=\bR^{n}$ with the Euclidean metric and Lebesgue measure, and $w$ is a (nonnegative) polynomial. \begin{lemma}\label{mainlemma}
Let $1\leq p<\infty$, $\alpha\in(0,1)$. There exists $C=C(p,\alpha)>0$ such that, for every ball $B$ of radius $r\in (0,1]$ and weight $w\in A_{\infty,{\rm loc}}^{\alpha}(G,{\rm d}_C,\mu_\chi)$ with $w_B\geq r^{-p}$, and every $u\in L^p_{\rm loc}(G,\mu_\chi)$, with $\nabla u\in L^p_{\rm loc}(G,\mu_\chi)$ we have
\begin{equation}
    \int_B (|\nabla u|^p+w |u|^p) \,d\mu_\chi\geq Cr^{-p}\|u\|_{L^p(B,\mu_\chi)}^p.  
\end{equation}   

\end{lemma}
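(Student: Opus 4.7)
\textbf{Proof plan for Lemma \ref{mainlemma}.}

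The strategy is to combine the Muckenhoupt lower bound for $w$ on a large subset of $B$ with the Poincaré inequality from Theorem \ref{eq teo poincare}. First I apply Lemma \ref{lemma lem1} to produce a Borel set
\[
E:=\{x\in B:\ w(x)\geq \alpha w_B\}\subset B,\qquad \mu_\chi(E)\geq \alpha\mu_\chi(B).
\]
The hypothesis $w_B\geq r^{-p}$ then gives the pointwise estimate $w\geq \alpha r^{-p}$ on $E$, so that
\[
\int_E |u|^p\,d\mu_\chi\leq \alpha^{-1} r^p\int_E w|u|^p\,d\mu_\chi\leq \alpha^{-1} r^p\int_B w|u|^p\,d\mu_\chi.
\]
This is the only place where the weight enters, and it transfers the bound $\int_B w|u|^p$ into a bound on the ``bulk'' norm $\int_E |u|^p$.

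Next I split the target quantity using the triangle inequality: for $p\geq 1$,
\[
\|u\|^p_{L^p(B,\mu_\chi)}\leq 2^{p-1}\bigl(\|u-u_B\|^p_{L^p(B,\mu_\chi)}+|u_B|^p\mu_\chi(B)\bigr).
\]
The first summand is handled directly by the Poincaré inequality \eqref{eq poincare}, which yields
\[
\|u-u_B\|^p_{L^p(B,\mu_\chi)}\leq C_1 r^p\int_B |\nabla u|^p\,d\mu_\chi,
\]
with a constant $C_1$ depending only on $p$ (since $r\leq 1$, one may use the constant $C_R$ at $R=1$). For the second summand I exploit the Muckenhoupt set $E$: because $\mu_\chi(E)\geq \alpha\mu_\chi(B)$, the constant $|u_B|^p$ satisfies
\[
|u_B|^p\mu_\chi(B)\leq \alpha^{-1}\int_E |u_B|^p\,d\mu_\chi\leq 2^{p-1}\alpha^{-1}\Bigl(\int_E|u|^p\,d\mu_\chi+\|u-u_B\|^p_{L^p(B,\mu_\chi)}\Bigr).
\]
Inserting the Poincaré bound on the second term and the weighted bound $\int_E |u|^p\leq \alpha^{-1}r^p\int_B w|u|^p\,d\mu_\chi$ into the previous inequality gives
\[
|u_B|^p\mu_\chi(B)\leq C_2\,r^p\int_B\bigl(|\nabla u|^p+w|u|^p\bigr)\,d\mu_\chi,
\]
with $C_2=C_2(p,\alpha)$. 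Adding this to the Poincaré bound for $\|u-u_B\|^p_{L^p(B,\mu_\chi)}$ produces the desired inequality after multiplying by $r^{-p}$.

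The proof is essentially algebraic once Lemma \ref{lemma lem1} and the Poincaré inequality are in hand; the only real obstacle is organizing the chain of inequalities so that every appearance of $u$ (both the mean value $u_B$ and the oscillation $u-u_B$) is absorbed either into $\int_B |\nabla u|^p\,d\mu_\chi$ via Poincaré or into $\int_B w|u|^p\,d\mu_\chi$ via the lower bound $w\geq \alpha r^{-p}$ on $E$. The reproducing use of the $A_{\infty,{\rm loc}}$ property, first to pass from averages to a large set on which $w$ is bounded below and then to compare $\mu_\chi(B)$ with $\mu_\chi(E)$, is what makes the argument uniform in $w$ and allows the constant $C$ to depend only on $p$ and $\alpha$.
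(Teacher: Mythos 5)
Your proof is correct and uses essentially the same ingredients and in essentially the same way as the paper: Lemma \ref{lemma lem1} to produce the set $E$ where $w\geq\alpha w_B\geq\alpha r^{-p}$ and $\mu_\chi(E)\geq\alpha\mu_\chi(B)$, the Poincar\'e inequality to absorb the oscillation $u-u_B$, and the elementary bound $|u_B|^p\leq 2^{p-1}(|u|^p+|u-u_B|^p)$ integrated over $E$. The only difference is cosmetic: the paper integrates a pointwise inequality over $E$ after reducing to a bound for $|u_B|^p\mu_\chi(B)$, while you do the same comparison at the level of the averaged quantities — the chain of inequalities and the resulting dependence of the constant on $p$ and $\alpha$ are the same.
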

\begin{proof}
By the Poincar\'e inequality (Theorem \ref{eq teo poincare}) we have
\[
\int_B |\nabla u|^p \,d\mu_\chi\geq 
   C r^{-p}\|u-u_B\|^p_{L^p(B,\mu_\chi)}.
\]
Hence it is sufficient to prove that 
\begin{equation}\label{eq tre}
\int_B (|\nabla u|^p+w |u|^p) \,d\mu_\chi\geq Cr^{-p}|u_B|^p\mu_\chi(B).
\end{equation}
To this end, again by the Poincar\'e inequality we have 
    \begin{equation}\label{eq poincare2}
   \int_B (|\nabla u|^p+w |u|^p) \,d\mu_\chi\geq 
   C r^{-p}\|u-u_B\|_{L^p(B,\mu_\chi)}^p+\int_B w |u|^p \,d\mu_\chi.
    \end{equation}
Hence, setting $E:=\{x\in B:\ w(x)\geq \alpha w_B \}$, by Lemma \ref{lemma lem1} we deduce that 
\begin{align*}
\int_B (|\nabla u|^p+w |u|^p) \,d\mu_\chi &\geq \int_E (C r^{-p}|u-u_B|^p+\alpha w_B |u|^p)\, d\mu_\chi\\
&\geq \int_E (C r^{-p}|u-u_B|^p+\alpha r^{-p} |u|^p)\, d\mu_\chi \\
&\geq \min\{C,\alpha\}r^{-p} \int_E |u-u_B|^p+ |u|^p)\, d\mu_\chi\\
&\geq 2^{-p+1}\min\{C,\alpha\}r^{-p}  |u_B|^p\mu_\chi(E)\\
&\geq 2^{-p+1}\alpha \min\{C,\alpha\} r^{-p} |u_B|^p\mu_\chi(B),
\end{align*}
which is \eqref{eq tre}.
\end{proof}
We can now prove Theorem \ref{teomain}.

\begin{proof}[Proof of Theorem \ref{teomain}]
We can suppose $I_{\rm UP}(w)>0$. Let $C_r(w)$ be the function defined in \eqref{eq cr}. Then by Proposition \ref{pro cr} we have $C_r(w)>0$ for every $r\in (0,1]$.  

We apply Lemma \ref{mainlemma} to the weight $r^{-p}w/C_r(w)$, which belongs to the class $A_{\infty,{\rm loc}}^{\alpha}(G,{\rm d}_C,\mu_\chi)$ for every $r\in(0,1]$.  
Hence, for every ball $B$ of radius $r\in(0,1]$ and $u\in L^p_{\rm loc}(G,\mu_\chi)$, with $\nabla u\in L^p_{\rm loc}(G,\mu_\chi)$ we have
\[ 
\int_B \Big(|\nabla u|^p+\frac{r^{-p} w}{C_r(w)} |u|^p\Big) \,d\mu_\chi\geq Cr^{-p}\|u\|_{L^p(B,\mu_\chi)}^p.
\]
By Proposition \ref{pro cov} there exists a covering of $G$ made of balls of radius $r$ and with a number of overlaps uniformly bounded with respect to $r$. Applying the above estimate to each of these balls and summing up we obtain
\begin{equation}\label{eq lowboun}
\int_G \Big(|\nabla u|^p+\frac{r^{-p} w}{C_r(w)} |u|^p\Big) \,d\mu_\chi \geq Cr^{-p}\|u\|_{L^p(G,\mu_\chi)}^p
\end{equation}
for a new constant $C>0$. 

Now, we apply this estimate with $
u=V_\phi f$
 for any $f\in\mathcal{H}$ with 
\[
\|V_\phi f\|_{L^p(G,d\mu_\chi)}=1. 
\]
Let us verify that $\nabla u\in L^p(G,d\mu_\chi)$. 

Using \eqref{eq due0}, \eqref{eq due} and Minkowski's inequality, together with 
\[
\int_G |u(xy)|^p\, d\mu_\chi(x)=\chi(y^{-1})\|u\|^p_{L^p(G,\mu_\chi)},
\]
 we see that, for $j=1,\ldots,\ell$,
\[
\|X_ju\|_{L^p(G,\mu_\chi)}= \frac{1}{\sqrt{c_\phi}}\|X_j(V_\phi f\ast V_\phi \phi) \|_{L^p(G,\mu_\chi)}\leq C'_j \|V_\phi f\|_{L^p(G,\mu_\chi)}=C'_j,
\]
where 
\[
C'_j:= \frac{1}{\sqrt{c_\phi}}\int_G \chi^{-1/p} |X_j V_\phi \phi| d\lambda<\infty
\]
by the assumption \eqref{eq phi}. 
As a consequence, 
\[
\int_G |\nabla u|^p\,d\mu_\chi\leq C'',
\]
which combined with \eqref{eq lowboun} yields 
\[
\int_G \frac{r^{-p} w}{C_r(w)} |u|^p \,d\mu_\chi\geq {C''}
\]
if $r$ is chosen small enough, so that $Cr^{-p}\geq 2C''$ (and $r\leq1$). Fixing $r$ in this way, we have 
\[
\int_G w |u|^p \,d\mu_\chi\geq {C''}C_r(w)r^p.
\]
By Proposition \ref{pro cr} we know that \[
C_r(w)\geq C'''C_1(w) = C''' I_{\rm UP}(w),\] for some constant $C'''>0$. This concludes the proof of Theorem \ref{teomain}. 
\end{proof}
\section{Anti-Wick operators}\label{sec anti-wick}
In this section we recall the definition and basic properties of anti-Wick operators. 
\subsection{Anti-Wick operators with a distribution symbol}
Let ${{h}}>0$ and $\varphi\in\cS(\bR^d)$, with $\|\varphi\|_{L^2(\bR^d)}=1$. Consider the {\it generalized coherent states}
\[
\varphi^{{{h}}}_{(x_0,\omega_0)}(x):={{h}}^{-d/4} e^{\frac{i}{{{h}}}\omega_0 x} \varphi({{h}}^{-1/2}(x-x_0)) \qquad x,x_0,\omega_0\in\bR^d.
\]
Observe that $\|\varphi^{{{h}}}_{(x_0,\omega_0)}\|_{L^2(\bR^d)}=1$ for every ${{h}}>0$, $x_0,\omega_0\in\bR^d$.  When $\varphi(x)=\pi^{-d/4}e^{\frac{1}{2}|x|^2}$ we recapture the particular case considered in Introduction; cf. \eqref{eq cs}. 

It is well known that the Parseval formula 
\begin{equation}\label{eq parseval} 
(2\pi{{h}})^{-d} \int_{\bR^{2d}}| \langle f,\varphi^{{{h}}}_{(x,\omega)}\rangle |^2\, dx\,d\omega= \|f\|^2_{L^2}
\end{equation} 
holds true for every $f\in L^2(\bR^{d})$. It is equivalent to the {\it resolution of the identity} formula:
\[
f=(2\pi{{h}})^{-d}\int_{\bR^{2d}}  \langle f,\varphi^{{{h}}}_{(x,\omega)}\rangle  \varphi^{{{h}}}_{(x,\omega)}\, dx\, d\omega,
\]
to be understood weakly in $L^2(\bR^d)$; see \cite[Proposition 235]{degosson_book} and \cite[Theorem 3.2.1]{grochenig-book}. This motives the following definition. 

\begin{definition}\label{def antiwick} Let $a\in\cS'(\bR^{2d})$. The anti-Wick operator ${\rm Op}^{\rm AW}_h(a):\mathcal{S}(\bR^d)\to \mathcal{S}'(\bR^d)$ with ${{h}}$-symbol $a$ is given by 
\begin{equation}\label{eq anti-w}
{\rm Op}^{\rm AW}_h(a)f=(2\pi{{h}})^{-d}\int_{\bR^{2d}} a(x,\omega) \langle f,\varphi^{{{h}}}_{(x,\omega)}\rangle \varphi^{{{h}}}_{(x,\omega)} \, dx\,d\omega,
\end{equation}
to be understood weakly in the sense of distributions, namely (with abuse of notation) for $f,g\in\mathcal{S}(\bR^d)$,
\[
\langle {\rm Op}^{\rm AW}_h(a) f,g\rangle = (2\pi{{h}})^{-d}\int_{\bR^{2d}} a(x,\omega) \langle f,\varphi^{{{h}}}_{(x,\omega)}\rangle\overline{\langle g,\varphi^{{{h}}}_{(x,\omega)}}\rangle\, dx\, d\omega.
\]
\end{definition}
It is easy to see that if $f\in\mathcal{S}(\bR^{d})$ the function $\langle f,\varphi^{{{h}}}_{(x,\omega)}\rangle$ is Schwartz, so that the above definition makes sense.  
\begin{remark} \label{rem shubin} 
To give the flavor of this quantization procedure, following \cite{berezin} we compute the anti-Wick operator associated with a polynomial symbol
\[
a(z):=\sum_{|\alpha|+|\beta|\leq m} c_{\alpha,\beta} z^\alpha\overline{z}^\beta,\quad z:=x+i\omega.
\]
Consider the annihilation and creation operators ${\bf a}_j$ and ${\bf a}^+_j$, $j=1,\ldots,d$, respectively, given (in $\mathcal{S}(\bR^{d})$ or even $\mathcal{S}'(\bR^{d}))$ by 
\[
{\bf a}_j={{h}}\frac{\partial}{\partial x_j} +x_j,\qquad {\bf a}^+_j=-{{h}}\frac{\partial}{\partial x_j} +x_j.
\]
Let us check that 
\[
{\rm Op}^{\rm AW}_h(a)=\sum_{|\alpha|+|\beta|\leq m} c_{\alpha,\beta}\, {\bf a}^\alpha({\bf a}^+)^\beta.
\]
We observe that, for $f\in \mathcal{S}(\bR^{d})$,
\[
(x_j+i\omega_j)\langle \varphi^{{h}}_{(x,\omega)},f \rangle = \langle \varphi^{{h}}_{(x,\omega)},{\bf a}_j^+ f \rangle
\]
and taking the conjugate of both sides, 
\[
(x_j-i\omega_j)\langle f,\varphi^{{h}}_{(x,\omega)} \rangle = \langle {\bf a}_j^+ f, \varphi^{{h}}_{(x,\omega)} \rangle.
\]
Hence, for $f,g\in \mathcal{S}(\bR^{d})$,
\begin{align*}
\langle {\rm Op}^{\rm AW}_h(a) f,g\rangle&=(2\pi{{h}})^{-d}\sum_{|\alpha|+|\beta|\leq m}c_{\alpha,\beta}\int_{\bR^{2d}} z^\alpha\overline{z}^\beta \langle f,\varphi^{{h}}_{(x,\omega)}\rangle \langle \varphi^{{h}}_{(x,\omega)},g\rangle\, dx\,d\omega\\
&=(2\pi{{h}})^{-d}\sum_{|\alpha|+|\beta|\leq m}c_{\alpha,\beta}\int_{\bR^{2d}} \langle {({\bf a}^+)}^\beta  f,\varphi^{{h}}_{(x,\omega)}\rangle \langle \varphi^{{h}}_{(x,\omega)},{({\bf a}^+)}^\alpha g\rangle\, dx\,d\omega\\
&=\sum_{|\alpha|+|\beta|\leq m}c_{\alpha,\beta}\langle {({\bf a}^+)}^\beta  f,{({\bf a}^+)}^\alpha g\rangle,
\end{align*}
which gives the desired conclusion. 

In particular, to the symbol $|x|^2+|\omega|^2=|z|^2$ appearing in \eqref{eq uncertainty} there corresponds the operator 
\[
\sum_{j=1}^d {\bf a}_j{\bf a}_j^+=-{{h}}^2\Delta+|x|^2 +d{{h}}. 
\]
Since the lowest eigenvalue of the harmonic oscillator $-{{h}}^2\Delta+|x|^2$ is $d{{h}}$, this agrees with the lower bound \eqref{eq uncertainty}. 

We also observe that the class of anti-Wick operators with polynomial symbols coincides with that  of differential operators with polynomial coefficients. 
\end{remark}
\subsection{Self-adjoint realizations via quadratic forms}\label{sec quadratic form}
Suppose now that \par\medskip
{\it $a(x,\omega)$ is a nonnegative measurable function in $\bR^{2d}$ with at most polynomial growth, in the sense that for some $C,N>0$,
 \begin{equation}\label{eq polgr}
\int_{B(0,r)} a(x,\omega)\, dx d\omega\leq C(1+r)^N\qquad\forall r>0.
\end{equation}
}
Consider the set 
\[
\mathcal{D}_{a,h}:=\{f\in L^2(\bR^d): \int_{\bR^{2d}}a(x,\omega)|\langle f,\varphi^{{{h}}}_{(x,\omega)}\rangle|^2\, dx\,d\omega<\infty\}.
\]
\begin{lemma}\label{lem lemma4.3} The set $ \mathcal{D}_{a,h}$ is a vector space that contains the Schwartz class $\cS(\bR^d)$. Moreover $\mathcal{D}_{a,h}$ is complete
 with respect to the norm \[
\|f\|^2_{a,h}:=(2\pi h)^{-d}\int_{\bR^{2d}}(a(x,\omega)+1)|\langle f,\varphi^{{{h}}}_{(x,\omega)}\rangle|^2\, dx\,d\omega.
\]
\end{lemma}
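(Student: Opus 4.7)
The plan is to dispatch the three assertions in order: vector space structure, inclusion of $\mathcal{S}(\bR^d)$, and completeness.

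\textbf{Vector space and basic estimate.} I would first observe that $f \mapsto \langle f, \varphi^{h}_{(x,\omega)}\rangle$ is linear in $f$, so $|\langle cf+g,\varphi^{h}_{(x,\omega)}\rangle|^2 \leq 2|c|^2|\langle f,\varphi^{h}_{(x,\omega)}\rangle|^2 + 2|\langle g,\varphi^{h}_{(x,\omega)}\rangle|^2$. Multiplying by $a+1$ and integrating, we get that $\mathcal{D}_{a,h}$ is closed under linear combinations, and that $\|\cdot\|_{a,h}$ satisfies the triangle inequality (it is the norm associated to a weighted $L^2$ integral in the $(x,\omega)$-variable, so Minkowski applies directly). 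Definiteness is automatic since, by Parseval \eqref{eq parseval} applied to the constant weight $1$,
\[
\|f\|_{a,h}^2 \geq (2\pi h)^{-d}\int_{\bR^{2d}} |\langle f,\varphi^{h}_{(x,\omega)}\rangle|^2\,dx\,d\omega = \|f\|_{L^2}^2,
\]
so in particular $\mathcal{D}_{a,h}\subset L^2(\bR^d)$.

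\textbf{Inclusion of the Schwartz class.} For $f\in\mathcal{S}(\bR^d)$, the map $(x,\omega)\mapsto \langle f,\varphi^{h}_{(x,\omega)}\rangle$ is (up to a dilation and a unitary twist) the short-time Fourier transform of $f$ with window $\varphi\in\mathcal{S}$, hence belongs to $\mathcal{S}(\bR^{2d})$; in particular for every $M>0$ there is a constant $C_M$ with $|\langle f,\varphi^{h}_{(x,\omega)}\rangle|^2 \leq C_M(1+|x|+|\omega|)^{-M}$. Decomposing $\bR^{2d}$ dyadically into the annuli $A_k=B(0,2^k)\setminus B(0,2^{k-1})$ (with $A_0=B(0,1)$) and using the polynomial growth assumption \eqref{eq polgr}, we get
\[
\int_{\bR^{2d}}(a+1)|\langle f,\varphi^{h}_{(x,\omega)}\rangle|^2\,dx\,d\omega \leq \sum_{k\geq 0} C_M(1+2^{k-1})^{-M}\int_{A_k}(a+1)\,dx\,d\omega,
\]
which is summable by \eqref{eq polgr} for $M$ large enough (depending on $N$ and $d$). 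Hence $\mathcal{S}(\bR^d)\subset\mathcal{D}_{a,h}$.

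\textbf{Completeness.} This is the main step. Let $(f_n)\subset\mathcal{D}_{a,h}$ be Cauchy in $\|\cdot\|_{a,h}$. By the Parseval bound above, $(f_n)$ is also Cauchy in $L^2(\bR^d)$, hence converges to some $f\in L^2(\bR^d)$. Because $\langle \cdot,\varphi^{h}_{(x,\omega)}\rangle$ is a bounded linear functional on $L^2$ for each fixed $(x,\omega)$, we have the \emph{everywhere} pointwise convergence
\[
\langle f_n,\varphi^{h}_{(x,\omega)}\rangle \to \langle f,\varphi^{h}_{(x,\omega)}\rangle \qquad \text{for every } (x,\omega)\in\bR^{2d}.
\]
Fix $\varepsilon>0$ and $N$ with $\|f_n-f_m\|_{a,h}<\varepsilon$ for all $n,m\geq N$. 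Freezing $m\geq N$ and letting $n\to\infty$, Fatou's lemma applied to the nonnegative integrands $(a+1)|\langle f_n-f_m,\varphi^{h}_{(x,\omega)}\rangle|^2$ gives
\[
\|f-f_m\|_{a,h}^2\leq \liminf_{n\to\infty}\|f_n-f_m\|_{a,h}^2\leq \varepsilon^2.
\]
This shows $f-f_m\in\mathcal{D}_{a,h}$, hence $f\in\mathcal{D}_{a,h}$ since $f_m\in\mathcal{D}_{a,h}$, and $\|f-f_m\|_{a,h}\leq\varepsilon$ for all $m\geq N$, proving convergence in norm.

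\textbf{Expected obstacle.} The only step where something can go wrong is the Schwartz inclusion: one has to check that the rapid decay of the STFT beats the polynomial growth of the integral averages of $a$, which is exactly why assumption \eqref{eq polgr} is invoked. The completeness argument is standard once one has the $L^2$ domination and pointwise convergence of the STFT, and is the template used in weighted $L^2$ settings.
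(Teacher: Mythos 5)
Your argument is correct and follows essentially the same route as the paper: Parseval to get an $L^2$ lower bound, dyadic annuli plus the growth condition \eqref{eq polgr} for the Schwartz inclusion, and $L^2$-convergence, pointwise convergence of $\langle f_n,\varphi^h_{(x,\omega)}\rangle$, and Fatou for completeness. The only cosmetic difference is that you freeze $m$ and let $n\to\infty$ in the Cauchy estimate, whereas the paper freezes $n$ and sends $m\to\infty$; these are equivalent.
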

\begin{proof} It is clear that $\mathcal{D}_{a,h}$ is a vector space. If $f\in \cS(\bR^d)$ we have that the function $(x,\omega)\mapsto \langle f,\varphi^{{{h}}}_{(x,\omega)}\rangle$ is Schwartz; in particular, for every $M>0$ there exists a constant $C_M$ such that 
\[
|\langle f,\varphi^{{{h}}}_{(x,\omega)}\rangle|^2\leq C_M(1+|x|^2+|\omega|^2)^{-M/2}.
\]
 Setting for brevity $\Phi(x,\omega):=a(x,\omega)|\langle f,\varphi^{{{h}}}_{(x,\omega)}\rangle|^2$, we therefore have, for every $M>0$,
\begin{align*}
\int_{\bR^{2d}}&\Phi(x,\omega)\, dx\, d\omega=\int_{B(0,1)}\Phi(x,\omega)\, dx\, d\omega+\sum_{k=0}^\infty \int_{B(0,2^{k+1})\setminus B(0,2^k)}\Phi(x,\omega)\, dx\, d\omega\\
&\leq C_M\Big( \int_{B(0,1)} a(x,\omega)\, dx\, d\omega+\sum_{k=0}^\infty 2^{-kM} \int_{B(0,2^{k+1})\setminus B(0,2^k)}a(x,\omega)\, dx\, d\omega\Big)\\
&\leq C_M\Big(\int_{B(0,1)} a(x,\omega)\, dx\, d\omega+\sum_{k=0}^\infty 2^{-kM} \int_{B(0,2^{k+1})}a(x,\omega)\, dx\, d\omega\Big).
\end{align*}
This expression is certainly finite if we choose $M>N$, where $N$ is the constant in \eqref{eq polgr}. 

Concerning the second statement, let $f_n$ be a Cauchy sequence; hence, for every $\varepsilon>0$, 
\begin{equation}\label{eq Cauchy}
\int_{\bR^{2d}}(a(x,\omega)+1)|\langle f_n-f_m,\varphi^{{{h}}}_{(x,\omega)}\rangle|^2\, dx\,d\omega<\varepsilon
\end{equation}
for $n,m>N_\varepsilon$. Then by \eqref{eq parseval} we see that $f_n$ converges in $ L^2(\bR^d)$ to some $f$ and therefore $\langle f_n,\varphi^{{{h}}}_{(x,\omega)}\rangle\to \langle f ,\varphi^{{{h}}}_{(x,\omega)}\rangle$ pointwise. Letting $m\to+\infty$ in \eqref{eq Cauchy} and using the Fatou Lemma we obtain
\[
\int_{\bR^{2d}}(a(x,\omega)+1)|\langle f_n-f,\varphi^{{{h}}}_{(x,\omega)}\rangle|^2\, dx\,d\omega\leq\varepsilon
\]
for $n>N_\varepsilon$, namely $f\in \mathcal{D}_{a,h}$ and $\|f_n- f\|_{a,h}\to 0$.
\end{proof}

As a consequence of the previous lemma, the quadratic form 
\begin{equation}\label{eq qa0}
Q_{a,h}(f):=(2\pi{{h}})^{-d}\int_{\bR^{2d}}a(x,\omega)|\langle f,\varphi^{{{h}}}_{(x,\omega)}\rangle|^2\, dx\,d\omega,
\end{equation}
with domain $\mathcal{D}_{a,h}$, is a densely defined nonnegative closed form (notice that $\|f\|_{a,h}^2=Q_{a,h}(f)+\|f\|^2_{L^2}$). Hence it is canonically associated with a unique self-adjoint operator $A_{a,h}$ in $L^2(\bR^d)$ with domain ${\rm dom}\,A_{a,h}\subset \mathcal{D}_{a,h}$ and such that
\begin{equation}\label{eq domainq}
\langle A_{a,h} f,g\rangle=Q_{a,h}(f,g)\qquad f\in{\rm dom}\,A_{a,h},\ g\in \mathcal{D}_{a,h},
\end{equation} 
where $Q_{a,h}(\cdot,\cdot)$ is the sesquilinear form corresponding to the quadratic form $Q_{a,h}(\cdot)$ (see e.g. \cite[Theorem 1.18]{frank}). Moreover, denoting by $\sigma(A_{a,h})$ its spectrum, we have 
\begin{align}\label{eq qa}
\inf \sigma(A_{a,h})&=  \inf_{f\in\mathcal{D}_{a,h}\setminus\{0\}}\frac{Q_{a,h}(f)}{\|f\|_{L^2}^2}\nonumber \\
&= \inf_{f\in L^2(\bR^{d})\setminus\{0\}}\frac{(2\pi{{h}})^{-d}\int_{\bR^{2d}}a(x,\omega)|\langle f,\varphi^{{{h}}}_{(x,\omega)}\rangle|^2\, dx\,d\omega}{\|f\|_{L^2}^2},
\end{align}
where in the last equality we used the fact that if $f\in L^2(\bR^{d})\setminus \mathcal{D}_{a,h}$ the above integral is $+\infty$. 
\begin{remark}
We claim that $\mathcal{S}(\bR^d)\subset{\rm dom}\,A_{a,h}$ and $A_{a,h} f={\rm Op}^{\rm AW}_h(a)f$ for $f\in \mathcal{S}(\bR^d)$, where ${\rm Op}^{\rm AW}_h(a)$ is defined in Definition \ref{def antiwick}. 

Indeed, we know that the domain ${\rm dom}\,A_{a,h}$ consists of the functions $f\in\mathcal{D}_{a,h}$ such that 
\[
|Q_{a,h}(f,g)|\leq C\|g\|_{L^2}\quad\forall g\in \mathcal{D}_{a,h}
\]
for some $C>0$.  Since $|\langle g,\varphi^{{{h}}}_{(x,\omega)}\rangle|\leq \|g\|_{L^2}$ by the Cauchy-Schwarz inequality, we see that the latter estimate is certainly satisfied when the function $a(x,\omega)\langle f,\varphi^{{{h}}}_{(x,\omega)}\rangle$ belongs to $L^1(\bR^{2d})$. By arguing as in the proof of Lemma \ref{lem lemma4.3}, we see that this is the case if $f\in\mathcal{S}(\bR^d)$. Hence $\mathcal{S}(\bR^d)\subset{\rm dom}\,A_{a,h}$ and the claim then follows from \eqref{eq domainq}. 

Hence $A_{a,h}$ is a self-adjoint extension of ${\rm Op}^{\rm AW}_h(a)$, regarded as a densely defined symmetric operator in $L^2(\bR^d)$ with domain $\mathcal{S}(\bR^d)$. One could similarly consider any closed subspace $\tilde{D}\subset \mathcal{D}_{a,h}$ (with respect to the norm $\|\cdot\|_{a,h}$), with $\mathcal{S}(\bR^d)\subset \tilde{D}$, and consequently the self-adjoint operator $\tilde{A}$ associated with the quadratic form $Q_{a,h}$ with form domain $\tilde{D}$. In particular, if $\tilde{D}$ is the closure of $\mathcal{S}(\bR^d)$ in $\mathcal{D}_{a,h}$ the operator $\tilde{A}$ is the Friedrichs extension. In the subsequent sections we will provide lower and upper bounds for the bottom of the spectrum and of the essential spectrum of $A_{a,h}$, but the results will apply equally to these realizations $\tilde{A}$. This is true for the lower bounds, since the quadratic form associated with $A_{a,h}$ extends that associated with $\tilde{A}$, and, as regards the upper bounds, because only the values $Q_{a,h}(f)$ with $f\in \mathcal{S}(\bR^d)$ will enter in their proofs.

 \end{remark}
 \begin{remark}
 The space $\mathcal{D}_{a,h}$, endowed with the norm $\|\cdot\|_{a,h}$ (and $h=1$), coincides with the {\it weighted modulation space} denoted in the literature by $M^{2,2}_{a+1}$; see \cite[Chapter 11]{grochenig-book}. However, in the theory of modulation spaces the weight (the function $1+a$, in this case) is usually required to satisfy certain further conditions (see \cite[Definition 11.1.1]{grochenig-book}), whereas here we only assume that $a$ is nonnegative and satisfies the growth condition \eqref{eq polgr}. Under those extra assumptions it follows from \cite[Proposition 11.3.4]{grochenig-book} that $\mathcal{S}(\bR^{d})$ is in fact dense in $\mathcal{D}_{a,h}$. However we will not need this property. 
 \end{remark}
\section{The bottom of the spectrum - Muckenhoupt symbols}
We use the same notation as in Section \ref{sec quadratic form}. We are interested in estimating the bottom of the spectrum and of the essential spectrum of the operator $A_{a,h}$ (defined in Section \ref{sec quadratic form} via the quadratic form $Q_{a,h}$ in \eqref{eq qa} with form domain $\mathcal{D}_{a,h}$ and a window $\varphi\in\mathcal{S}(\bR^{d})$ with $\|\varphi\|_{L^2}=1$) in terms of the nonnegative symbol $a$. Here we will consider symbols in the Muckenhoupt class $A_\infty(\bR^{2d})$. 
We will use \eqref{eq qa} and Theorem \ref{teomain} applied to the Schr\"odinger representation $\pi$ of the reduced Heisenberg group $\mathbb{H}^d$, as defined in Section \ref{sec heisenberg}. 
\subsection{The bottom of the spectrum}
We observe that, by rescaling, for $f\in L^2(\bR^d)$, 
\begin{align}\label{eq resc}
  (2\pi{{h}})^{-d}\int_{\bR^{2d}}&a(x,\omega)|\langle f,\varphi^{{{h}}}_{(x,\omega)}\rangle|^2\, dx\,d\omega\nonumber \\\
  &=(2\pi)^{-d}\int_{\bR^{2d}}a({h}^{1/2} x,{h}^{1/2}\omega)| \langle D_{{h}} f,\varphi_{(x,\omega)}\rangle|^2\, dx\,d\omega,
\end{align}
where we have set $D_{{h}} f(x):={{h}}^{d/4}f({{h}}^{1/2} x)$ for the unitary dilation and 
\begin{equation}\label{eq phixo}
\varphi_{(x_0,\omega_0)}(x):=e^{i\omega_0 x} \varphi(x-x_0) \qquad x,x_0,\omega_0\in\bR^d,
\end{equation}
i.e. $\varphi_{(x_0,\omega_0)}=\varphi^{{h}}_{(x_0,\omega_0)}$ with ${{h}}=1$. Also observe that for the representation $\pi$ in \eqref{eq pih} we have $\pi(x,\omega,t)\varphi=e^{it}\varphi_{(x,\omega)}$. 
Hence, by \eqref{eq qa} we can write
\begin{align}\label{eq reduc}
\inf\sigma(A_{a,h})&
\\
=\inf_{f\in  L^2(\bR^d)\setminus\{0\}}& \frac{(2\pi)^{-d}\int_{\mathbb{R}^{2d}}a({h}^{1/2}x,{h}^{1/2}\omega)|\langle f,\varphi_{(x,\omega)}\rangle |^2\,dx\,d\omega}{\|f\|_{L^2}^2}\nonumber\\
= \inf_{f\in  L^2(\bR^d)\setminus\{0\}}& \frac{(2\pi)^{-(d+1)}\int_{\mathbb{H}^{d}}a({h}^{1/2}x,{h}^{1/2}\omega)|\langle f,\pi(x,\omega,t)\varphi\rangle |^2\,dx\,d\omega\,dt}{\|f\|_{L^2}^2}.\nonumber
 \end{align}

  Also, let
\begin{align}\label{eq lambda} 
\boldsymbol{\lambda}(a,h):&=\inf\left\{\fint_B a(x,\omega)\, dx\,d\omega:\ B\textrm{ is a ball of radius }{h}^{1/2}\right\} \\
\label{eq resc2} 
&=\inf\left\{\fint_B a({h}^{1/2}x,{h}^{1/2}\omega)\, dx\,d\omega:\ B\textrm{ is a ball of radius }1\right\}. 
\end{align}

\begin{theorem}\label{mainteo2}
Let $\alpha\in(0,1)$. There exists $C=C(\alpha,\varphi)>0$ (depending on the dimension $d$ as well) such that, for every $a\in A^\alpha_{\infty}(\bR^{2d})$ and ${{h}}>0$, 
\begin{equation}\label{eq ineq}
C^{-1} \boldsymbol{\lambda}(a,h)\leq \inf \sigma(A_{a,h})\leq C \boldsymbol{\lambda}(a,h).
\end{equation}
\end{theorem}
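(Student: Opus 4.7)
The plan is to deduce both bounds from the variational formula \eqref{eq reduc}, using Theorem \ref{teomain} on the Heisenberg group for the lower bound and a coherent-state test function together with the $A_\infty$ doubling property for the upper bound. Throughout, set $w(x,\omega,t):=a(h^{1/2}x,h^{1/2}\omega)$, viewed as a function on $\mathbb{H}^d$ that is constant in $t$. Since the Euclidean class $A_\infty^\alpha(\bR^{2d})$ is invariant under dilations with the same constant $\alpha$, we have $w\in A_\infty^\alpha(\bR^{2d})\subset A_{\infty,\mathrm{loc}}^\alpha(\bR^{2d})$, and by Lemma \ref{lem equiv} $w\in A_{\infty,\mathrm{loc}}^\beta(\mathbb{H}^d)$ for some $\beta=\beta(\alpha)$, \emph{uniformly in $h$}.

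\textbf{Lower bound.} Apply Theorem \ref{teomain} with $p=2$, $G=\mathbb{H}^d$, $\pi$ the Schr\"odinger representation, and $\phi=\varphi$. The admissibility condition \eqref{eq admissibility} is automatic (Section \ref{sec heisenberg}), and since $\varphi\in\mathcal{S}(\bR^d)$ the function $V_\varphi\varphi$ is, up to a phase in $t$, a Schwartz function on $\bR^{2d}$; as $\mathbb{H}^d$ is unimodular ($\chi\equiv 1$) and the $t$-fiber is compact, the quantity $C_0(\phi)$ in \eqref{eq phi} is finite and depends only on $\varphi$. The minimization problem $I(w)$ then reproduces (up to the fixed constant $(2\pi)^{-(d+1)}c_\varphi^{-1}$) the Rayleigh quotient in \eqref{eq reduc}, so $\inf\sigma(A_{a,h})$ is comparable to $I(w)$. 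Theorem \ref{teomain} yields $I(w)\geq C\,I_{\mathrm{UP}}(w)$. Finally, by inequality \eqref{eq ineq2} of Lemma \ref{lem equiv},
\[
I_{\mathrm{UP}}(w)\asymp \inf\Bigl\{\fint_{B'} a(h^{1/2}x,h^{1/2}\omega)\,dx\,d\omega:\ B'\subset\bR^{2d}\ \text{Euclidean ball of radius }1\Bigr\},
\]
which after rescaling equals $\lambda(a,h)$ by \eqref{eq resc2}. This proves the lower bound with a constant depending only on $\alpha$ and $\varphi$.

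\textbf{Upper bound.} Fix $\varepsilon>0$ and pick $(x_0,\omega_0)\in\bR^{2d}$ such that $\fint_{B((x_0,\omega_0),h^{1/2})}a\,dx\,d\omega\leq \lambda(a,h)+\varepsilon$. Test the quadratic form in \eqref{eq qa} with $f=\varphi^h_{(x_0,\omega_0)}$, which has unit $L^2$ norm. A direct change of variables shows that
\[
|\langle \varphi^h_{(x_0,\omega_0)},\varphi^h_{(x,\omega)}\rangle|=|V_\varphi\varphi\bigl(h^{-1/2}(x-x_0),h^{-1/2}(\omega-\omega_0)\bigr)|,
\]
and since $V_\varphi\varphi$ is Schwartz, for every $N>0$,
\[
|\langle \varphi^h_{(x_0,\omega_0)},\varphi^h_{(x,\omega)}\rangle|^2\leq C_N\bigl(1+h^{-1/2}|(x,\omega)-(x_0,\omega_0)|\bigr)^{-2N}.
\]
Decompose $\bR^{2d}$ into the ball $B_0:=B((x_0,\omega_0),h^{1/2})$ and dyadic annuli $B_k\setminus B_{k-1}$ with $B_k:=B((x_0,\omega_0),2^k h^{1/2})$. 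On $B_k\setminus B_{k-1}$ the weight $|V_\varphi\varphi|^2$ is bounded by $C_N 2^{-2Nk}$. By the (global) doubling property of $A_\infty(\bR^{2d})$ weights, there is $\kappa=\kappa(\alpha)$ with $\int_{B_k}a\,dx\,d\omega\leq C\,2^{\kappa k}\int_{B_0}a\,dx\,d\omega$. Choosing $N$ so that $2N>\kappa$, the geometric series $\sum_k 2^{(\kappa-2N)k}$ converges and we obtain
\[
Q_{a,h}(\varphi^h_{(x_0,\omega_0)})\leq C\,(2\pi h)^{-d}\int_{B_0}a\,dx\,d\omega\leq C'\fint_{B_0}a\,dx\,d\omega\leq C'\bigl(\lambda(a,h)+\varepsilon\bigr),
\]
since $|B_0|\asymp h^d$. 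Letting $\varepsilon\to 0$ and combining with the second equality in \eqref{eq qa} gives $\inf\sigma(A_{a,h})\leq C'\lambda(a,h)$.

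\textbf{Main obstacle.} The delicate part is the \emph{uniformity in $h$}: all constants appearing via Lemma \ref{lem equiv}, Proposition \ref{pro cr}, and the dyadic summation in the upper bound must depend only on the $A_\infty^\alpha$ constant of $a$ and on $\varphi$, not on $h$. The lower bound relies crucially on the fact that the Muckenhoupt constant is preserved under the dilation $a\mapsto a(h^{1/2}\cdot,h^{1/2}\cdot)$, which is what allows Theorem \ref{teomain} to be applied with a constant independent of $h$; matching the Heisenberg-ball averages to the Euclidean-ball averages via \eqref{eq ineq2} closes the argument.
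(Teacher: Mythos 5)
Your argument is correct and follows essentially the same path as the paper's proof: the lower bound comes from \eqref{eq reduc} and Theorem \ref{teomain} applied to the Schr\"odinger representation of $\mathbb{H}^d$ with the rescaled symbol as weight (transferring ball averages via Lemma \ref{lem equiv}), and the upper bound comes from testing the Rayleigh quotient with a coherent state and summing over dyadic annuli using the $A_\infty$ doubling inequality \eqref{eq doub inf}. The only cosmetic difference is that you carry out the annulus estimate at scale $h^{1/2}$ directly rather than after the rescaling \eqref{eq resc}, and your stated normalization factor $(2\pi)^{-(d+1)}c_\varphi^{-1}$ is slightly off (it is actually $1$ since $c_\varphi=(2\pi)^{d+1}$), but this has no bearing on the argument.
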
 
\begin{proof}[Proof of Theorem \ref{mainteo2}]
First of all we notice that $a$ satisfies \eqref{eq polgr} for some $C,N>0$, because the measure $a(x,\omega)\, dx\, d\omega$ is doubling. More precisely, we observe for future reference (see e.g.  \cite[Theorem 9.3.3]{grafakos}) that there exists a constant $C>0$ depending on $\alpha$ and $d$ such that for every $a\in A_\infty^\alpha(\bR^{2d})$ and $x_0,\omega_0\in\bR^d$, $k\geq 0$,
\begin{equation}\label{eq doub inf}
\int_{B((x_0,\omega_0),2^k)}a(x,\omega)\, dx\, d\omega \leq C^{k} \int_{B((x_0,\omega_0),1)}a(x,\omega)\, dx\, d\omega.
\end{equation}

 Also, observe that the function $a({h}^{1/2}x,{h}^{1/2}\omega)$ belongs to $A_{\infty}^\alpha (\bR^{2d})$ for every ${{h}}>0$, by the dilation invariance of this class. 

The first inequality in \eqref{eq ineq} then follows by applying \eqref{eq reduc} and Theorem \ref{teomain} with $p=2$ where: 
\begin{itemize}
\item{} $\mathcal{H}=L^2(\bR^d)$, $G=\mathbb{H}^d$ is the reduced Heisenberg group, and $\pi$ is the Schr\"odinger representation of $\mathbb{H}^d$ (cf. Section \ref{sec heisenberg}).
\item $\phi=\varphi$.
\item $\chi=1$ and $\mu_\chi$ is therefore the Haar measure of $\mathbb{H}^d$.
\item The left-invariant Riemannian metric on $\mathbb{H}^d$ is induced by the vector fields $X_j$, $\Omega_j$, $j=1,\ldots,d$ and $T$ in Section \ref{sec heisenberg}.
\item The weight $w$ on $\mathbb{H}^d$ is given by $w(x,\omega,t)=a({h}^{1/2}x,{h}^{1/2}\omega)$.
\end{itemize} 
Indeed, since $a\in A^\alpha_\infty(\bR^{2d})\subset  A^\alpha_{\infty,{\rm loc}}(\bR^{2d})$, by Lemma \ref{lem equiv} we have  $w\in A^\beta_{\infty,{\rm loc}}(\mathbb{H}^d)$, for some $\beta\in(0,1)$ depending only on $\alpha$ and $d$. Also, \eqref{eq phi} is satisfied because 
\[
V_\phi\phi(x,\omega,t)=(2\pi)^{-(d+1)/2}\langle \varphi, \pi(x,\omega,t)\varphi\rangle=(2\pi)^{-(d+1)/2}e^{-it}\langle \varphi,\varphi_{(x,\omega)}\rangle,
\]
the function $\langle \varphi,\varphi_{(x,\omega)}\rangle$ is Schwartz, and $X_j,\Omega_j$ and $T$ have polynomial coefficients. 
 
 The last sentence in Lemma \ref{lem equiv} and \eqref{eq resc2} allow us to transfer to $\bR^{2d}$ the lower bound on $\mathbb{H}^d$ obtained from Theorem \ref{teomain}. 
 
Concerning the second inequality in \eqref{eq ineq}, by \eqref{eq reduc} and \eqref{eq resc2} we see that it suffices to prove that, given $\alpha\in(0,1)$, there exists a constant $C=C(\alpha,\varphi)>0$ such that for every $x_0,\omega_0\in\bR^d$, $a\in A^\alpha_\infty(\bR^{2d})$ and ${{h}}>0$,
\begin{align}\label{eq 26gen}
\int_{\bR^{2d}}a({h}^{1/2}x,{h}^{1/2}\omega)|\langle \varphi_{(x_0,\omega_0)},&\varphi_{(x,\omega)}\rangle |^2\,dx\,d\omega\\
&\leq C\int_{B((x_0,\omega_0),1)}
a({h}^{1/2}x,{h}^{1/2}\omega)\,dx\,d\omega.\nonumber
\end{align}
Namely, it is enough to prove that for every $a\in A^\alpha_\infty(\bR^{2d})$
\begin{align}\label{eq 26genbis}
\int_{\bR^{2d}}a(x,\omega)|\langle \varphi,\varphi_{(x-x_0,\omega-\omega_0)}&\rangle |^2\,dx\,d\omega\nonumber \\
&\leq C\int_{B((x_0,\omega_0),1)}
a(x,\omega)\,dx\,d\omega
\end{align}
with a constant $C$ depending only on $\alpha$ and $\varphi$.
 
Since the expression $\langle \varphi,\varphi_{(x,\omega)}\rangle$ is a Schwartz function, the latter estimate follows as in the proof of Lemma \ref{lem lemma4.3} (with ${{h}}=1$), considering annuli centered at $(x_0,\omega_0)$, and using \eqref{eq doub inf} to conclude.  
\end{proof}
\subsection{The bottom of the essential spectrum}
We now estimate the bottom of the essential spectrum $\sigma_{\rm ess}(A_{a,h})$. We recall its characterization in terms of the associated quadratic form (cf. \cite[Lemma 1.20]{frank}): 
\begin{align}\label{eq ess sp}
\inf \sigma_{\rm ess}(A_{a,h})=\inf\{\liminf_{n\to\infty}& \,Q_{a,h}(f_n):\ (f_n)\subset\mathcal{D}_{a,h},\\
&\ \|f_n\|_{L^2}=1,\ f_n\to 0 \textrm{ weakly in } L^2(\bR^d)\}.\nonumber
\end{align}
Let 
\begin{align*}
\boldsymbol{\lambda}_{\rm ess}(a,h):&=\liminf_{(x_0,\omega_0)\to\infty} \fint_{B((x_0,\omega_0),{h}^{1/2})} a(x,\omega)\, dx\,d\omega\\
&=\liminf_{(x_0,\omega_0)\to\infty} \fint_{B((x_0,\omega_0),1)} a({h}^{1/2}x,{h}^{1/2}\omega)\, dx\,d\omega.
\end{align*}
\begin{corollary}\label{maincor}
Let $\alpha\in(0,1)$. There exists $C(\alpha,\varphi)>0$ such that, for every $a\in A^\alpha_{\infty}(\bR^{2d})$ and ${{h}}>0$, 
\begin{equation}\label{eq essential}
C^{-1} \boldsymbol{\lambda}_{\rm ess}(a,h)\leq \inf \sigma_{\rm ess}(A_{a,h})\leq C \boldsymbol{\lambda}_{\rm ess}(a,h).
\end{equation}
In particular, $\sigma(A_{a,h})$ is purely discrete if and only if 
\[
\lim_{(x_0,\omega_0)\to\infty}\int_{B((x_0,\omega_0),{h}^{1/2})} a(x,\omega)\, dx\,d\omega=+\infty.
\]
\end{corollary}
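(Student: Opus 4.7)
The plan is to establish \eqref{eq essential} by adapting the two halves of the proof of Theorem \ref{mainteo2} with a localization at infinity in phase space, combined with the min-max characterization \eqref{eq ess sp}; the ``in particular'' statement then follows immediately, since $\sigma(A_{a,h})$ is purely discrete exactly when $\inf\sigma_{\rm ess}(A_{a,h})=+\infty$, which by \eqref{eq essential} is equivalent to $\lambda_{\rm ess}(a,h)=+\infty$, and (the balls having fixed Lebesgue measure) to the claimed divergence of $\int_{B((x_0,\omega_0),h^{1/2})}a\,dx\,d\omega$.

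For the upper bound I would exhibit translated-modulated coherent states as a Weyl sequence. Choose $(x_n,\omega_n)\to\infty$ in $\bR^{2d}$ along which $\fint_{B((x_n,\omega_n),h^{1/2})}a\to\lambda_{\rm ess}(a,h)$ and take $f_n:=\varphi^{{h}}_{(x_n,\omega_n)}$. Then $\|f_n\|_{L^2}=1$, and $f_n\rightharpoonup 0$ in $L^2(\bR^d)$, because for every $g\in L^2(\bR^d)$ the pairing $\langle g,f_n\rangle$ is the value at $(x_n,\omega_n)$ of a function in $C_0(\bR^{2d})$. The estimate $Q_{a,h}(f_n)\le C\fint_{B((x_n,\omega_n),h^{1/2})}a$ is exactly \eqref{eq 26gen} with $(x_0,\omega_0)=(x_n,\omega_n)$, already proved inside Theorem \ref{mainteo2}; taking $\liminf$ and invoking \eqref{eq ess sp} yields $\inf\sigma_{\rm ess}(A_{a,h})\le C\lambda_{\rm ess}(a,h)$.

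For the lower bound, let $(f_n)\subset\mathcal{D}_{a,h}$ with $\|f_n\|_{L^2}=1$ and $f_n\rightharpoonup 0$, and set $u_n:=V_\varphi(D_h f_n)$ as in \eqref{eq reduc}: one has $\|u_n\|_{L^2(\mathbb{H}^d)}=1$, $\|\nabla u_n\|_{L^2(\mathbb{H}^d)}\le K$ uniformly in $n$ (by the Minkowski estimate used in the proof of Theorem \ref{teomain}), and $u_n$ is uniformly bounded and converges to $0$ pointwise, whence $\|u_n\|_{L^2(K')}\to 0$ for every compact $K'\subset\mathbb{H}^d$ by dominated convergence. Writing $w(x,\omega,t):=a({h}^{1/2}x,{h}^{1/2}\omega)\in A_{\infty,{\rm loc}}^{\beta}(\mathbb{H}^d)$ and, for $R>0$ and $r\in(0,1]$,
\[
C_r^{(R)}(w):=\inf\Bigl\{\fint_B w\,d\mu_\chi:\ B\text{ is a ball of radius }r\text{ contained in }B(e,R)^c\Bigr\},
\]
I would apply Lemma \ref{mainlemma} to the weight $r^{-2}w/C_r^{(R)}(w)$ on each ball of a bounded-overlap Vitali cover of $B(e,R+r)^c$ by such balls, and sum, to obtain
\[
\|\nabla u_n\|_{L^2(\mathbb{H}^d)}^{2}+\frac{r^{-2}}{C_r^{(R)}(w)}\int_{\mathbb{H}^d}w\,|u_n|^2\,d\mu_\chi\ge C'r^{-2}\,\|u_n\|^2_{L^2(B(e,R+r)^c)}.
\]
Fixing $r$ small enough (depending only on $\varphi$) that $C'r^{-2}\ge 2K^2$, passing to $\liminf_n$ and using $\|u_n\|^2_{L^2(B(e,R+r)^c)}\to 1$, one deduces $\liminf_n\int w|u_n|^2\gtrsim C_r^{(R)}(w)\gtrsim C_1^{(R)}(w)$, where the second inequality is the direct analogue of Proposition \ref{pro cr} for $C_r^{(R)}$ (its proof being unchanged, since it rests only on local doubling and Corollary \ref{pro loc doubl}). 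Transferring $C_1^{(R)}(w)$ from $\mathbb{H}^d$ to $\bR^{2d}$ via Lemma \ref{lem equiv} and Remark \ref{rem covar}, and then letting $R\to\infty$, yields $\liminf_n Q_{a,h}(f_n)\ge C^{-1}\lambda_{\rm ess}(a,h)$, as required. The delicate point---which I expect to be the main obstacle---is verifying that every constant produced by this localized covering argument is independent of $R$; this reduces to inspecting the proofs of Theorem \ref{eq teo poincare}, Lemma \ref{mainlemma}, the Vitali covering argument of Proposition \ref{pro cov}, and Corollary \ref{pro loc doubl}, and noting that all their constants depend only on $\alpha$ (equivalently $\beta$) and on the ambient metric measure structure of $\mathbb{H}^d$, not on the location of the balls in the cover.
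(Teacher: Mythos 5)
Your upper bound is the same as the paper's: testing \eqref{eq ess sp} with coherent states $\varphi_{(x_n,\omega_n)}$ along a sequence realizing the $\liminf$, checking weak convergence to zero, and quoting \eqref{eq 26gen}.

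Your lower bound, however, takes a genuinely different route. The paper perturbs $a$ by a decaying auxiliary weight $b(x,\omega)=C(1+|x|+|\omega|)^{-1}$, observing that $b\in A_\infty(\bR^{2d})$ with constant depending only on $d$ (not on the large constant $C$), that $b$ can be tuned so that $\lambda(a+b,h)\geq\tfrac12\lambda_{\rm ess}(a,h)$, and that $Q_{b,h}$ is a compact form perturbation, so Weyl's theorem for quadratic forms gives $\sigma_{\rm ess}(A_{a,h})=\sigma_{\rm ess}(A_{a+b,h})$; the lower bound then follows from Theorem \ref{mainteo2} applied to $a+b$. You instead run the Fefferman--Phong covering machinery of the proof of Theorem \ref{teomain} directly on $B(e,R+r)^c$: this is sound, once one notes that all the input constants---local doubling of $\mu_\chi$ and of $w\,d\mu_\chi$, the Vitali bounded-overlap constant, and the Poincar\'e constant on balls of radius $\leq 1$---are uniform over balls regardless of their location, as you correctly flag. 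The only detail you should make explicit is the localized analogue of Proposition \ref{pro cr}: enlarging a ball of radius $r$ contained in $B(e,R)^c$ to one of radius $1$ only lands you in $B(e,R')^c$ with $R'=R-1+r$, so the comparison is $C_r^{(R)}(w)\gtrsim C_1^{(R')}(w)$ rather than $C_1^{(R)}(w)$; since you send $R\to\infty$ at the end, this costs nothing. What each approach buys: the paper's is shorter and recycles Theorem \ref{mainteo2} as a black box, and the $b$-perturbation trick carries over verbatim to the semiclassical Corollary \ref{cor semi}; yours is more self-contained (no Weyl theorem for forms, no auxiliary weight engineering) and makes the geometric mechanism at infinity explicit, at the cost of rerunning the covering argument and tracking uniformity in $R$.
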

\begin{proof}
Let us prove the first inequality in \eqref{eq essential}. We first suppose $\boldsymbol{\lambda}_{\rm ess}(a,h)<+\infty$ (which holds for some $h>0$ if and only if it holds for every $h>0$, because the measure $a(x,\omega)\, dx\, d\omega$ is doubling). 
Let $R>0$ such that 
\begin{equation}\label{eq 1aa}
\fint_{B((x_0,\omega_0),{h}^{1/2})} a(x,\omega)\, dx\,d\omega\geq \frac{1}{2}\boldsymbol{\lambda}_{\rm ess}(a,h)
\end{equation}
for $|x_0|^2+|\omega_0|^2>R^2$. Consider the function 
\[
b(x,\omega):= C(1+|x|+|\omega|)^{-1},
\]  
where the constant $C>0$ is chosen so that 
\begin{equation}\label{eq 2aa} 
b(x,\omega)\geq  \frac{1}{2}\boldsymbol{\lambda}_{\rm ess}(a,h)\qquad{\rm for}\ |x|^2+|\omega|^2\leq (R+{h}^{1/2})^2.  
\end{equation}
Observe that $C$ will depend on $a$ and ${{h}}$ (and $R$, which here is fixed), but $b\in A^\beta_\infty(\bR^{2d})$ for some $\beta\in (0,1)$ depending only on $d$. Indeed, the weight $|z|^{-1}$, $z=(x,\omega)\in\bR^{2d}$ belongs to $A_\infty(\bR^{2d})$ (see e.g. \cite[page 218]{stein_book}) and therefore the same holds for $(1+|x|+|\omega|)^{-1}\asymp (1+|z|)^{-1} \asymp\min\{1,|z|^{-1}\}$ because $A_\infty(\bR^{2d})$ is a lattice; see e.g. \cite[Proposition 4.3]{kilpelainen}). Hence $a+b\in A^{\min\{\alpha,\beta\}}_\infty(\bR^{2d})$ and $\mathcal{D}_{a+b,h}=\mathcal{D}_{a,h}$ because $b$ is bounded.

We will check in a moment that
\begin{equation}\label{eq pert}
\sigma_{\rm ess}(A_{a,h})= \sigma_{\rm ess}(A_{a+b,h}).
\end{equation}
Applying Theorem \ref{mainteo2} to $A_{a+b,h}$ we see that, for $f\in \mathcal{D}_{a+b,h}=\mathcal{D}_{a,h}$ and $\|f\|_{L^2}=1$, 
\begin{equation}\label{eq pert2}
Q_{a+b,h}(f) \geq C \boldsymbol{\lambda}(a+b,h),
\end{equation}
 cf. \eqref{eq lambda}, for a constant $C$ depending only on $\alpha$ and $\varphi$. 
 
 On the other hand we have 
 \begin{align*}
 &\boldsymbol{\lambda}(a+b,h)\\
 &=\inf_{(x_0,\omega_0)\in\bR^{2d}}\Big(\fint_{B((x_0,\omega_0),{h}^{1/2})}a(x,\omega)\, dx\,d\omega+\fint_{B((x_0,\omega_0),{h}^{1/2})}b(x,\omega)\, dx\,d\omega\Big)\\
 &\geq\frac{1}{2}\boldsymbol{\lambda}_{\rm ess}(a,h),
 \end{align*}
 by \eqref{eq 1aa} and \eqref{eq 2aa}. Hence, by \eqref{eq pert}, \eqref{eq ess sp} (with $a+b$ in place of $a$) and \eqref{eq qa} (since $\inf \sigma(A_{a+b,h})\leq \inf \sigma_{\rm ess}(A_{a+b,h})$) we obtain the first inequality of \eqref{eq essential}. 
 
 It remains to check \eqref{eq pert}. This follows from Weyl's theorem for quadratic forms (cf. \cite[Theorem 1.51]{frank}) if we prove that the (nonnegative) quadratic form $Q_{b,h}$ (cf. \eqref{eq qa0})
is compact in $\mathcal{D}_{a,h}$, i.e. that for some $C>0$,
 \begin{equation}\label{eq qb}
 Q_{b,h}(f)\leq C\|f\|^2_{a,h} \qquad \forall f\in \mathcal{D}_{a,h},
 \end{equation}
and that the bounded operator in $\mathcal{D}_{a,h}$ induced by the form $Q_{b,h}$ via the Riesz representation theorem is compact. Now, since $b$ is bounded, the quadratic form $Q_{b,h}$ comes in fact from a bounded operator in $L^2(\bR^{d})$ and it suffices to prove that this operator is compact in $L^2(\bR^d)$ (cf. \cite[page 56]{frank}). But this is well known, since $b\to0$ at infinity (cf. \cite[Theorem 14.5]{wong}). 

The case $\boldsymbol{\lambda}_{\rm ess}(a,h)=+\infty$ requires only minor modifications. That is, for every $t>0$ we can find $R$ and $C$ such that  \eqref{eq 1aa} and \eqref{eq 2aa} hold with $\frac{1}{2}\boldsymbol{\lambda}_{\rm ess}(a,h)$ replaced by $t$. Then we can repeat the same argument as above and we arrive at $t\leq \inf \sigma(A_{a+b,h})\leq \inf \sigma_{\rm ess}(A_{a+b,h})=\inf \sigma_{\rm ess}(A_{a,h})$. Since $t$ is arbitrary, we obtain $\inf \sigma_{\rm ess}(A_{a,h})=+\infty$.

Let us now prove the second inequality in \eqref{eq essential}. By \eqref{eq ess sp}, \eqref{eq qa0} and \eqref{eq resc} we see that it suffices to prove that there exists $C>0$ depending on $\alpha$ and $\varphi$ such that, for every $a\in A^\alpha_\infty(\bR^{2d})$, ${{h}}>0$ and {\it every} sequence $(x_n,\omega_n)\to\infty$, we have 
\[
\int_{\bR^{2d}}a({h}^{1/2} x,{h}^{1/2}\omega)| \langle f_n,\varphi_{(x,\omega)}\rangle|^2\, dx\,d\omega\leq C \int_{B((x_n,\omega_n),1)} a({h}^{1/2} x,{h}^{1/2}\omega)\, dx\, d\omega
\]
for {\it some} sequence $f_n\in\mathcal{S}(\bR^d)$, $\|f_n\|_{L^2}=1$, with $f_n\to0$ weakly in $L^2(\bR^d)$. Now, this is true with $f_n=\varphi_{(x_n,\omega_n)}$ by \eqref{eq 26gen} and, moreover, $f_n\to0$ weakly in $L^2(\bR^d)$. Indeed, for $g\in L^2(\bR^d)$, $\langle g, \varphi_{(x,\omega)} \rangle\to 0$  as $(x,\omega)\to\infty$ because this holds if $g\in \mathcal{S}(\bR^{d})$ ($\langle g, \varphi_{(x,\omega)} \rangle$ is a Schwartz function, in that case) and we have the uniform bound $|\langle g, \varphi_{(x,\omega)} \rangle|\leq \|g\|_{L^2}$ by the Cauchy-Schwarz inequality. 
\end{proof}
\begin{corollary}\label{cor poli}
For every polynomial $P(x,\omega)$ in $\bR^{2d}$ of degree $N\geq 1$, and $\beta>-1/N$, the conclusions of Theorem \ref{mainteo2} and Corollary \ref{maincor} hold true for $a=|P|^\beta$, with a constant $C$ depending only on $N, \beta, d$ and $\varphi$. 
\end{corollary}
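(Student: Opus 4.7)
The plan is to reduce the corollary to a uniform Muckenhoupt property of $|P|^\beta$. An inspection of the statements and proofs of Theorem \ref{mainteo2} and Corollary \ref{maincor} shows that all constants there depend only on the parameter $\alpha$ of the Muckenhoupt class $A_\infty^\alpha(\bR^{2d})$ containing $a$, on the window $\varphi$, and on the dimension. Hence it is enough to prove that $|P|^\beta \in A_\infty^\alpha(\bR^{2d})$ for some $\alpha = \alpha(N, \beta, d) \in (0,1)$, together with the polynomial growth bound \eqref{eq polgr}. The latter is immediate: for $\beta \geq 0$ one has $|P|^\beta(z) \leq C(P)(1+|z|)^{N\beta}$, whereas for $-1/N < \beta < 0$ the local integrability of $|P|^\beta$, with integrals over Euclidean balls growing at most polynomially in the radius, is a classical consequence of the sublevel set estimate recalled below.

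The key ingredient I will use is the Ricci--Stein/Carbery--Wright sublevel set inequality: for every polynomial $P$ of degree $\leq N$ on $\bR^n$, every ball $B$, and every $\lambda > 0$,
\[
\bigl|\{z \in B : |P(z)| \leq \lambda \sup_B |P|\}\bigr| \leq C_{N,n}\, \lambda^{1/N} |B|,
\]
with $C_{N,n}$ depending only on $N$ and $n$. From this I would extract a uniform reverse H\"older inequality
\[
\Big( \fint_B |P|^{q\beta}\, dz \Big)^{1/q} \leq K\, \fint_B |P|^\beta\, dz
\]
for some $q = q(N, d, \beta) > 1$ and $K = K(N, d, \beta) > 0$, valid for every $\beta > -1/N$. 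By Proposition \ref{pro equivalenze}, such a reverse H\"older condition is equivalent, with quantitative constants, to $|P|^\beta$ belonging to $A_\infty^\alpha(\bR^{2d})$ for some $\alpha$ depending only on $N, \beta, d$. Applying Theorem \ref{mainteo2} and Corollary \ref{maincor} to $a = |P|^\beta$ with this $\alpha$ then yields the desired bounds with constants depending only on $N, \beta, d$ and $\varphi$.

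The main obstacle is deriving the reverse H\"older inequality with constants that do not depend on the particular polynomial $P$, but only on its degree. The delicate case is $-1/N < \beta < 0$, where $|P|^\beta$ is singular on $\{P = 0\}$ and one must control $\int_B |P|^{q\beta}$ uniformly in $P$, using only the degree bound and the sublevel set estimate (one typically layers the sublevel sets dyadically and sums a geometric series, which converges precisely when $q\beta > -1/N$, i.e.\ for $q$ close enough to $1$). Since $|P|^\beta$ enters here as a \emph{Euclidean} weight on $\bR^{2d}$, no adaptation to the subRiemannian group setting is needed for this step, and the result is by now standard in the theory of Muckenhoupt weights arising from polynomials.
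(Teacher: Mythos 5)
Your proposal follows essentially the same route as the paper: both arguments reduce the corollary to the single assertion that $|P|^\beta$ belongs to $A_\infty^\alpha(\bR^{2d})$ for some $\alpha\in(0,1)$ depending only on $N$, $\beta$, $d$. The paper simply cites \cite[page 219]{stein_book} for this uniform $A_\infty$ membership, whereas you sketch the standard proof of that fact via the Ricci--Stein sublevel set estimate and a uniform reverse H\"older bound; the substance is the same.
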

\begin{proof}
This follows by observing that such a function $a$ belongs to $A^\alpha_\infty(\bR^{2d})$, for some $\alpha\in (0,1)$ depending only on $N$, $d$ and $\beta$ (see \cite[page 219]{stein_book}). 
\end{proof}
\begin{corollary}\label{cor 7mar}\ 
For every $\alpha\in(0,1)$ there exists $C=C(\alpha,\varphi)>0$ such that the following holds true. For every measurable function $a(x,\omega)\geq0$, and $h>0$, setting $\gamma_0:={\rm ess\,inf}_{\bR^{2d}}\,a$, we have 
\[
C^{-1}\boldsymbol{\lambda}(a-\gamma_0,h)\leq \inf\sigma(A_a)-\gamma_0\leq C\boldsymbol{\lambda}(a-\gamma_0,h),
\]
provided $a-\gamma_0\in A^\alpha_\infty(\bR^{2d})$. 
\end{corollary}
\begin{proof}
This follows by applying Theorem \ref{mainteo2} to the symbol $a-\gamma_0$. 
\end{proof}
\section{The bottom of the spectrum --- semiclassical symbols}\label{sec semiclassicalbis} In this section we continue the study of the bottom of the spectrum of the anti-Wick operator $A_{a,h}$ with a nonnegative symbol $a$ (see Section \ref{sec quadratic form} for the notation), that is here assumed to belong to the symbol classes appearing in semiclassical analysis (cf. \cite[page 81]{dimassi}, \cite[Definition A.2.2]{shubin_book} and \cite[Section 4.3]{zworski}). We moreover suppose $h\in (0,1]$. 
\begin{definition} For $m\in\mathbb{R}$, $\rho\in [0,1/2)$, we denote by $S^m_\rho$ the space of the families of smooth functions $a(\cdot,h)$ in $\bR^{2d}$, depending on a parameter $h\in (0,1]$ such that for some $N_0\in\mathbb{N}$ and every $\alpha,\beta\in\mathbb{N}^d$, with $|\alpha|+|\beta|\geq N_0$,
\begin{equation}\label{eq asemi}
\sup_{(x,\omega,h)\in\bR^{2d}\times(0,1]}|\partial^\alpha_x\partial^\beta_\omega a(x,\omega,h)|h^{m+\rho(|\alpha|+|\beta|)}<\infty.
\end{equation}
\end{definition}
We observe that a finite number of derivatives $\partial^\alpha_x\partial^\beta_\omega a$ are allowed to be unbounded. 

Notice that a nonnegative symbol $a\in S^m_\rho$ certainly satisfies the growth condition \eqref{eq polgr}. Hence, as in Section \ref{sec quadratic form} we can consider the corresponding anti-Wick operator $A_{a,h}$ defined via the  quadratic form $Q_{a,h}$ in \eqref{eq qa0}, for a window $\varphi\in \mathcal{S}(\bR^{d})$, with $\|\varphi\|_{L^2}=1$. 
Observe that $A_{a,h}$ will therefore depend on $h$ even through the symbol $a$.

Let 
\begin{equation}\label{eq lambdaprime}
\boldsymbol{\lambda}'(a,h):=\inf_{(x_0,\omega_0)\in\mathbb{R}^{2d}}\sup_{(x,\omega)\in B((x_0,\omega_0),{h}^{1/2})}a(x,\omega,h).
\end{equation}

\begin{theorem}\label{teo semi}
Let $a\in S^m_\rho$ for some $m\in\mathbb{R}$, $\rho\in [0,1/2)$, with $a\geq0$. 

For every $N\in\mathbb{N}$ there exists $C_N>0$ such that, for every $h\in (0,1]$,
\begin{equation}\label{eq ulower}
C_N^{-1}\boldsymbol{\lambda}'(a,h)-C_N h^{N}\leq \inf \sigma(A_{a,h})\leq C_N(\boldsymbol{\lambda}'(a,h)+h^{N}). 
\end{equation}
\end{theorem}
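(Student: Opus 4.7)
The plan is to exploit the fact that, for $a \in S^m_\rho$ with $\rho < 1/2$, the Taylor expansion of order $N_0$ around any $(x_0, \omega_0) \in \bR^{2d}$ yields
\[
a(x, \omega) = P_{(x_0, \omega_0)}(x, \omega) + R(x, \omega), \qquad (x, \omega) \in B((x_0, \omega_0), 2 h^{1/2}),
\]
with $P_{(x_0, \omega_0)}$ a polynomial of degree $< N_0$ and $|R| \leq C h^{(1/2 - \rho) N_0 - m}$ on that ball. For any prescribed $N$, we can choose $N_0 = N_0(N, m, \rho)$ so that $|R| \leq h^N$. Since polynomials of degree $\leq N_0 - 1$ belong to $A_\infty^\beta(\bR^{2d})$ with $\beta = \beta(N_0, d)$ uniform in the polynomial (cf.\ \cite[p.~219]{stein_book}), in particular $\sup_B |P| \leq C \fint_B |P|$ on every ball. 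Inserting $a = P + R$ gives, on balls of radius $h^{1/2}$,
\[
\sup_B a \leq C \fint_B a + C h^N,
\]
and hence $\lambda'(a, h) \leq C \lambda(a, h) + C h^N$, with $\lambda(a, h)$ the average-based quantity of \eqref{eq lambda}.

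For the upper bound, I would fix $\epsilon > 0$, pick $(x_0, \omega_0)$ with $\sup_{B((x_0, \omega_0), h^{1/2})} a \leq \lambda'(a, h) + \epsilon$, and test with $f = \varphi^{{h}}_{(x_0, \omega_0)}$, which has $\|f\|_{L^2} = 1$. After the rescaling \eqref{eq resc}, $Q_{a, h}(f)$ becomes an integral of $a(h^{1/2} y, h^{1/2} \eta)$ against $|\langle \varphi_{(x_0', \omega_0')}, \varphi_{(y, \eta)}\rangle|^2$, with $(x_0', \omega_0') := h^{-1/2}(x_0, \omega_0)$. Split the integration at $|(y, \eta) - (x_0', \omega_0')| = R_N$: the inner region, corresponding to $B((x_0, \omega_0), R_N h^{1/2})$ in original coordinates, is controlled by $C(\lambda'(a, h) + h^N)$ via the polynomial sup-to-average bound applied on the enlarged ball; the outer region is $O(h^N)$ by combining the Schwartz decay of the cross-ambiguity function with the polynomial growth \eqref{eq polgr} of $a$, provided $R_N$ is large enough in terms of $N$.

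For the lower bound, in view of $\lambda'(a, h) \leq C \lambda(a, h) + C h^N$, it suffices to show $\inf \sigma(A_{a, h}) \geq C_N^{-1} \lambda(a, h) - C_N h^N$. Since $a$ need not be globally $A_\infty$, Theorem \ref{mainteo2} does not apply directly; the plan is to rerun its proof with the weight $a$ in the rescaled Heisenberg picture replaced by $w(x, \omega, t) := a(h^{1/2} x, h^{1/2} \omega) + h^N$. The polynomial approximation makes $w$ satisfy a uniform version of the conclusion of Lemma \ref{lemma lem1} on all balls $B$ of $\mathbb{H}^d$ of radius $\leq 1$: on balls where $w_B$ is large compared to $h^N$, the uniform $A_\infty$ property of polynomials of bounded degree transfers to $w$ modulo additive errors absorbed by the $h^N$ offset (using $\fint_B |P| \geq \fint_B a - h^N$ and $a + h^N \geq P$); on balls where $w_B$ is of order $h^N$, $w$ is essentially the constant $h^N$, for which the Muckenhoupt conclusion is trivial. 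This local weighted sub-Laplacian estimate feeds into Lemma \ref{mainlemma} and, via the reproducing-kernel argument of Theorem \ref{teomain} with $\phi = \varphi$ and $p = 2$, yields $\inf \sigma(A_{a + h^N, h}) \geq C_N^{-1} \lambda(a, h)$. Since by the resolution of identity \eqref{eq parseval} the anti-Wick operator of the constant $h^N$ equals $h^N I$, one has $A_{a + h^N, h} = A_{a, h} + h^N I$, giving $\inf \sigma(A_{a, h}) \geq C_N^{-1} \lambda(a, h) - h^N$, which combined with the first paragraph yields the desired bound.

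The hard part will be the bookkeeping of the additive $h^N$ error through the chain Lemma \ref{lemma lem1} $\to$ Lemma \ref{mainlemma} $\to$ Theorem \ref{teomain}, and in particular verifying that the Muckenhoupt-type conclusion for $w$ holds uniformly on all $\mathbb{H}^d$-balls of radius $\leq 1$, with constants depending only on $N_0(N, m, \rho)$, $d$ and $\varphi$. The possibility of choosing $N_0$ arbitrarily large in terms of $N$ is precisely what permits the error to appear additively as $h^N$ rather than multiplicatively.
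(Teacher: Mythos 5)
Your overall strategy matches the paper's: Taylor-expand $a(h^{1/2}\cdot,h)$ to high order $M=M_N$ so that the remainder is $O(h^N)$, exploit the uniform $A^\alpha_\infty$ membership of polynomials of bounded degree, feed a polynomial-based weight into Lemma~\ref{mainlemma} and the reproducing-kernel argument of Theorem~\ref{teomain}, and test with a coherent state plus the cross-ambiguity decay for the upper bound. The organizational differences in your lower bound are harmless and arguably cleaner: the paper defines, for each ball center, a weight $r^{-2}\Gamma_{\varepsilon,r,N}^{-1}(\varepsilon+|P_{(x_0,\omega_0),M}|)$ that is genuinely in $A^\beta_{\infty,\mathrm{loc}}(\mathbb{H}^d)$ and then passes to $a$ via the pointwise Taylor bound and lets $\varepsilon\to0$, whereas you work directly with the single weight $a(h^{1/2}\cdot)+h^N$ and verify the conclusion of Lemma~\ref{lemma lem1} ball by ball through the pointwise comparability $a(h^{1/2}\cdot)+h^N\asymp |P_{(x_0,\omega_0),M}|+h^N$ on each small ball (constants depending only on $N$), then use the clean algebraic fact $A_{a+h^N,h}=A_{a,h}+h^N I$. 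Both routes land in the same place.

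There is, however, a genuine flaw in your upper bound. You split at $|z-z_0'|=R_N$ and claim the outer region is $O(h^N)$ by combining Schwartz decay with the polynomial growth condition~\eqref{eq polgr}. This is false as stated: \eqref{eq polgr} provides an $h$-independent bound centered at the origin, not at $(x_0',\omega_0')$, and the outer region still contains a polynomial contribution of size roughly $\bigl(\sup_{B(z_0',1)}|P_{(x_0,\omega_0),M}|\bigr)R_N^{M-K}$, which for fixed $R_N$ is comparable to $\lambda'(a,h)$ and cannot be made $O(h^N)$ no matter how large $R_N$ is (since the low-order Taylor coefficients of $a(h^{1/2}\cdot,h)$ enjoy no uniform bound in $h$). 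The total outer contribution is in fact $O(\lambda'(a,h)+h^N)$, so your final inequality is still true, but the claimed intermediate $O(h^N)$ bound does not hold. The cleaner route—taken in the paper—is to use the \emph{global} Taylor remainder estimate $|a(h^{1/2}z,h)-P_{(x_0,\omega_0),M}(z,h)|\le C_N h^N(1+|z-z_0'|)^{M+1}$ that $S^m_\rho$ supplies (Lagrange remainder plus the high-derivative bounds), integrate this remainder against the Schwartz-decaying cross-ambiguity to get the $C_N h^N$ error, and handle the polynomial part via the $A_\infty$ estimate \eqref{eq 26genbis} without any inner/outer split.
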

\begin{proof} Let us prove the first inequality in \eqref{eq ulower}. By \eqref{eq reduc} we have to prove that, for $f\in L^2(\bR^{d})$, $\|f\|_{L^2}=1$, 
\begin{equation}\label{eq utop}
\int_{\bR^{2d}}a({h}^{1/2} x,{h}^{1/2}\omega,h)| \langle f,\varphi_{(x,\omega)}\rangle|^2\, dx\,d\omega\geq C_N^{-1}\boldsymbol{\lambda}'(a,h)-C_N h^{N}, 
\end{equation}
where the functions $\varphi_{(x,\omega)}$ are defined in \eqref{eq phixo}.

We know that $a$ satisfies \eqref{eq asemi} for $|\alpha|+|\beta|\geq N_0$, for some $N_0\in\mathbb{N}$. For $N\geq0$,  let $M=M_N\geq0$ be the minimum natural number such that 
\[
\begin{cases}
M+1\geq N_0\\
-m+(\frac{1}{2}-\rho)(M+1)\geq N.
\end{cases}
\]
Then 
\[
|\partial^\alpha_x\partial^\beta_\omega (a({h}^{1/2}x,{h}^{1/2}\omega,h))|\leq C'_N h^N\quad{\rm for}\ |\alpha|+|\beta|=M+1. 
\]
Let $P_{(x_0,\omega_0),M}$ be the Taylor polynomial of $a({h}^{1/2}x,{h}^{1/2}\omega,h)$ of order $M=M_N$ centered at $(x_0,\omega_0)\in\bR^{2d}$. We have 
\begin{equation}\label{eq ueqap}
|a({h}^{1/2}x,{h}^{1/2}\omega,h)-P_{(x_0,\omega_0),M}(x,\omega,h)|\leq C''_N h^N,\quad (x,\omega)\in B((x_0,\omega_0),1)
\end{equation}
for a constant $C''_N$ depending on $N$. 

Now, for $\varepsilon>0$ and $r\in(0,1]$ let
\[
\Gamma_{\varepsilon,r,N}:=\varepsilon+\inf_{(x_0,\omega_0,t_0)\in\mathbb{H}^d}\fint_{B((x_0,\omega_0,t_0),r)} |P_{(x_0,\omega_0),M}(x,\omega,h)|\, dx\, d\omega\, dt,
\]
where $B((x_0,\omega_0,t_0),r)$ is a ball in the reduced Heisenberg group $\mathbb{H}^d$ (see Section \ref{sec heisenberg} for the relevant definitions). The constant $\varepsilon$ is introduced to avoid potential division by zero later. Clearly $\Gamma_{\varepsilon,r,N}\geq\varepsilon$. 

For $\varepsilon>0$, $(x_0,\omega_0,t_0)\in\mathbb{H}^d$ and $r\in (0,1]$, consider the weight on $\mathbb{H}^d$, constant on the fibers, given by
\[
w_{\varepsilon,r,N,x_0,\omega_0}(x,\omega,t):= r^{-2} \Gamma_{\varepsilon,r,N}^{-1} (\varepsilon+|P_{(x_0,\omega_0),M}(x,\omega,h)|).
\]

Let us observe that the function $|P_{(x_0,\omega_0),M}|$, if is not identically zero, belongs to the class $A^\alpha_\infty(\bR^{2d})\subset A^\alpha_{\infty,{\rm loc}}(\bR^{2d})$ for some $\alpha\in(0,1)$ depending only on $M$, i.e. on $N$ (see e.g. \cite[page 219]{stein_book}). The same holds, for the same $\alpha$, for the functions $r^{-2} \Gamma_{\varepsilon,r,N}^{-1} (\varepsilon+|P_{(x_0,\omega_0),M}|)$ (we can suppose $\alpha\leq 1/2$, so that the constant function $\varepsilon$ belongs to $A^\alpha_{\infty}(\bR^{2d})$; moreover this holds even if $P_{(x_0,\omega_0),M}$ is identically zero). Hence, by Lemma \ref{lem equiv}, the functions $w_{\varepsilon,r,N,x_0,\omega_0}$ belong to $A^\beta_{\infty,{\rm loc}}(\mathbb{H}^d)$ for some $\beta\in(0,1)$ depending only on $N$. Moreover, by the very definition of $\Gamma_{\varepsilon,r,N}$,
\[
\fint_{B((x_0,\omega_0,t_0),r)} w_{\varepsilon,r,N,x_0,\omega_0}(x,\omega,t)\,dx\,d\omega\, dt\geq r^{-2}. 
\]
As a consequence of Lemma \ref{mainlemma} applied to $\mathbb{H}^d$, and with $p=2$, denoting by $\nabla$ the gradient with respect to the basis of left-invariant vector fields on $\mathbb{H}^d$ as in Section \ref{sec heisenberg}, we have, for $r\in (0,1]$,
\begin{align}\label{eq ueq1}
\int_{B((x_0,\omega_0,t_0),r)} &\big(|\nabla u(x,\omega,t)|^2+ w_{\varepsilon,r,N,x_0,\omega_0}(x,\omega,t)| u(x,\omega,t)|^2\big)dx\,d\omega\,dt\nonumber \\
&\geq C'''_Nr^{-2}\int_{B((x_0,\omega_0,t_0),r)}| u(x,\omega,t)|^2dx\,d\omega\,dt
\end{align}
for every $u\in L^2_{\rm loc}(\mathbb{H}^{d})$, with $\nabla u\in L^2_{\rm loc}(\mathbb{H}^{d})$. 

By Remark \ref{rem covar} (a) there exists $r_0\in(0,1]$ such that the projection on $\bR^{2d}$ of every ball $B((x_0,\omega_0,t_0),r_0)\subset\mathbb{H}^d$ is contained in the Euclidean ball $B((x_0,\omega_0),1)$. Hence, by \eqref{eq ueqap}, if $r\in (0,r_0]$ we have, on $B((x_0,\omega_0,t_0),r)$ 
\[
|P_{(x_0,\omega_0),M}(x,\omega,h)|\leq a({h}^{1/2}x,{h}^{1/2}\omega,h)+C''_Nh^N.
\]
Therefore, if $r\in(0,r_0]$,
\begin{align}\label{eq ueq2}
\int_{B((x_0,\omega_0,t_0),r)} &\Big(|\nabla u(x,\omega,t)|^2\nonumber\\
&+ \frac{a({h}^{1/2}x,{h}^{1/2}\omega,h)+C''_Nh^N+\varepsilon}{r^2\Gamma_{\varepsilon,r,N}} | u(x,\omega,t)|^2\Big)dx\,d\omega\,dt\nonumber \\
&\geq C'''_Nr^{-2}\int_{B((x_0,\omega_0,t_0),r)}| u(x,\omega,t)|^2dx\,d\omega\,dt.
\end{align}

Arguing as in the proof of Theorem \ref{teomain}, with $u(x,\omega,t)=\langle f,\pi(x,\omega,t)\varphi\rangle=e^{-it}\langle f,\varphi_{(x,\omega)}\rangle$ (where $\pi$ is the Schr\"odinger  representation of $\mathbb{H}^d$; cf. Section \ref{sec heisenberg}) we obtain, for some $r\in(0,r_0]$ (small enough, depending on $\varphi$),
\begin{align*}
\int_{\bR^{2d}}\frac{a({h}^{1/2}x,{h}^{1/2}\omega,h)+C''_Nh^N+\varepsilon}{\Gamma_{\varepsilon,r,N}} &|\langle f,\varphi_{(x,\omega)}\rangle|^2dx\,d\omega\\
&\geq C''''_{N}\int_{\bR^{2d}}| \langle f,\varphi_{(x,\omega)}\rangle|^2dx\,d\omega.
\end{align*}
Since $\varepsilon$ was arbitrary, using \eqref{eq parseval} (with $h=1$) we see that this implies \eqref{eq utop} if we prove that, for some constant $c_{r,N}>0$,
\begin{equation}\label{eq ult}
\Gamma_{\varepsilon,r,N}\geq c_{r,N}(\boldsymbol{\lambda}'(a,h)-C''_N h^N), \quad r\in (0,1],\ h\in(0,1].
\end{equation} 
To this end, observe that, by Lemma \ref{lem equiv},
\begin{align*}
\fint_{B((x_0,\omega_0,t_0),r)}& |P_{(x_0,\omega_0),M}(x,\omega,h)|\, dx\, d\omega\, dt\\
&\geq c_{r,N}\fint_{B((x_0,\omega_0),r)} |P_{(x_0,\omega_0),M}(x,\omega,h)|\, dx\, d\omega\\
&\geq c'_{r,N}\sup_{(x,\omega)\in B((x_0,\omega_0),1)} |P_{(x_0,\omega_0),M}(x,\omega,h)|,
\end{align*}
where in the last inequality we used the fact that on the finite dimensional space of polynomials of degree $\leq M$ all norms are equivalent. 

On the other hand, by \eqref{eq ueqap} we have, for $(x,\omega)\in B((x_0,\omega_0),1)$,
\[
|P_{(x_0,\omega_0),M}(x,\omega,h)|\geq a({h}^{1/2}x,{h}^{1/2}\omega,h)-C''_Nh^{N}.
\]
Hence 
\begin{align*}
\fint_{B((x_0,\omega_0,t_0),r)}& |P_{(x_0,\omega_0),M}(x,\omega,h)|\, dx\, d\omega\, dt\\
&\geq c'_{r,N}\Big(\sup_{(x,\omega)\in B((x_0,\omega_0),1)} a({h}^{1/2}x,{h}^{1/2}\omega,h) -C''_Nh^{N}\Big).
\end{align*}
Taking the infimum over $(x_0,\omega_0,t_0)\in\mathbb{H}^d$ gives \eqref{eq ult} and therefore concludes the proof of the first inequality in \eqref{eq ulower}.

Finally let us prove the second inequality in \eqref{eq ulower}.  By \eqref{eq reduc} we see that it suffices  to show that, for every $(x_0,\omega_0)\in\mathbb{R}^{2d}$, $h\in(0,1]$,
\begin{align}\label{eq confron}
\int_{\bR^{2d}}a({h}^{1/2} x,{h}^{1/2}\omega,h)|& \langle \varphi_{(x_0,\omega_0)},\varphi_{(x,\omega)}\rangle|^2\, dx\,d\omega\nonumber\\
&\leq C_N \sup_{(x,\omega)\in B((x_0,\omega_0),1)} a({h}^{1/2} x,{h}^{1/2}\omega,h) +C_N h^{N}, 
\end{align}
where the functions $\varphi_{(x,\omega)}$ are defined in \eqref{eq phixo}.

With the same notation as above, by \eqref{eq asemi} we have 
\[
|a({h}^{1/2} x,{h}^{1/2}\omega,h)-P_{(x_0,\omega_0),M}(x,\omega,h)|\leq C'''''_N h^N (|x-x_0|+|\omega-\omega_0|)^{M+1}.
\]
Since the function $|\langle \varphi_{(x_0,\omega_0)},\varphi_{(x,\omega)}\rangle|=|\langle \varphi,\varphi_{(x-x_0,\omega-\omega_0)}\rangle|$ in \eqref{eq confron} has rapid decay in $x-x_0,\ \omega-\omega_0$, we see that it suffices to verify \eqref{eq confron} with the function $a({h}^{1/2} x,{h}^{1/2}\omega,h)$ replaced by $|P_{(x_0,\omega_0),M}(x,\omega,h)|$. The desired conclusion is then clear by \eqref{eq 26genbis}.  
\end{proof}
Arguing as in the proof of Corollary \ref{maincor} we also obtain the following result concerning the essential spectrum.

Let \[
\boldsymbol{\lambda}'_{\rm ess}(a,h):=\liminf_{(x_0,\omega_0)\to\infty}\sup_{(x,\omega)\in B((x_0,\omega_0),{h}^{1/2})}a(x,\omega,h).
\]
\begin{corollary}\label{cor semi}
Let $a\in S^m_\rho$ for some $m\in\mathbb{R}$, $\rho\in [0,1/2)$, with $a\geq0$. 
For every $N\in\mathbb{N}$ there exists $C_N>0$ such that, for every $h\in (0,1]$,
\begin{equation}\label{eq 26una}
C^{-1}_N\boldsymbol{\lambda}'_{\rm ess}(a,h)-C_N h^N\leq \inf \sigma_{\rm ess}(A_{a,h})\leq C_N(\boldsymbol{\lambda}'_{\rm ess}(a,h)+h^N).
\end{equation}
In particular, $\sigma(A_{a,h})$ is purely discrete if and only if 
\[
\lim_{(x_0,\omega_0)\to\infty}\sup_{(x,\omega)\in B((x_0,\omega_0),{h}^{1/2})}a(x,\omega,h)=+\infty.
\]

\end{corollary}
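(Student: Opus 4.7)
\emph{Strategy.} The plan is to follow the proof of Corollary \ref{maincor} verbatim, with Theorem \ref{teo semi} playing the role of Theorem \ref{mainteo2}, replacing the $A_\infty$-perturbation there by a smoothly cut-off bump scaled to absorb the semiclassical magnitudes. The upper bound transfers with essentially no modification; the lower bound requires care to keep the constants independent of $h$ and of $\Lambda:=\lambda'_{\rm ess}(a,h)$.

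\emph{Upper bound.} By \eqref{eq ess sp} it suffices to exhibit a sequence $f_n\in\mathcal{S}(\bR^d)$ with $\|f_n\|_{L^2}=1$, $f_n\rightharpoonup 0$ in $L^2(\bR^d)$, and $\liminf_n Q_{a,h}(f_n)\leq C_N(\Lambda+h^N)$. I would take $f_n:=\varphi^h_{(x_n,\omega_n)}$ with $(x_n,\omega_n)\to\infty$ chosen so that $\sup_{B((x_n,\omega_n),h^{1/2})}a(\cdot,h)\to\Lambda$ (the case $\Lambda=+\infty$ requires only trivial modifications, by tending to $+\infty$). Weak convergence $f_n\rightharpoonup 0$ follows as in Corollary \ref{maincor} from the Cauchy--Schwarz bound $|\langle g,\varphi^h_{(x,\omega)}\rangle|\leq\|g\|_{L^2}$ and the pointwise decay of $\langle g,\varphi^h_{(x,\omega)}\rangle$ as $(x,\omega)\to\infty$ for $g\in\mathcal{S}(\bR^d)$ (plus density). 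The upper bound on $Q_{a,h}(f_n)$ then follows directly from the reduction \eqref{eq resc} (noting $D_h\varphi^h_{(x_n,\omega_n)}=\varphi_{(h^{-1/2}x_n,h^{-1/2}\omega_n)}$) together with the estimate \eqref{eq confron} proved inside Theorem \ref{teo semi}.

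\emph{Lower bound.} Assume $\Lambda<+\infty$ (else make trivial modifications), fix $\varepsilon>0$, and choose $R_0$ so that $\sup_{B((y_0,\eta_0),h^{1/2})}a(\cdot,h)\geq\Lambda-\varepsilon$ for every $|(y_0,\eta_0)|>R_0$. Fix a smooth bump $\chi\in C_c^\infty(\bR^{2d})$ with $\chi\equiv 1$ on $B(0,1)$ and $\mathrm{supp}\,\chi\subset B(0,2)$, and define $b(y,\eta):=\Lambda\,\chi(y/R,\eta/R)$ with $R:=\max\{R_0+1,\,\Lambda^{1/(M+1)}\}$, where $M=M_N$ is the Taylor order from the proof of Theorem \ref{teo semi}. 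The key properties are: (i) $b$ is bounded and vanishes at infinity, so $A_{b,h}$ is compact on $L^2(\bR^d)$ (by \cite[Theorem 14.5]{wong}, as in Corollary \ref{maincor}), and Weyl's theorem for quadratic forms gives $\sigma_{\rm ess}(A_{a+b,h})=\sigma_{\rm ess}(A_{a,h})$; (ii) $\lambda'(a+b,h)\geq\Lambda-\varepsilon$ by construction, since $b\equiv\Lambda$ on $B(0,R)\supset B(0,R_0+1)$ dominates $a+b$ on balls centered in $|(y_0,\eta_0)|\leq R_0$, while $a$ itself gives $\Lambda-\varepsilon$ on balls centered in $|(y_0,\eta_0)|>R_0$; (iii) for $|\alpha|=M+1$ one has $|\partial^\alpha b|\leq\Lambda R^{-(M+1)}\sup|\partial^\alpha\chi|\leq\sup|\partial^\alpha\chi|$, so the $(M+1)$-th derivative bound entering the constants in Theorem \ref{teo semi} applied to $a+b$ is controlled uniformly in $\Lambda$ and $h$. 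Theorem \ref{teo semi} then yields $\inf\sigma(A_{a+b,h})\geq C_N^{-1}(\Lambda-\varepsilon)-C_N h^N$ with $C_N$ independent of $\Lambda,h$; combining with $\inf\sigma_{\rm ess}(A_{a+b,h})\geq\inf\sigma(A_{a+b,h})$ and letting $\varepsilon\to 0$ gives the first inequality of \eqref{eq 26una}. The pure-discreteness statement is then immediate from the two-sided bound: $\sigma(A_{a,h})$ is purely discrete iff $\inf\sigma_{\rm ess}(A_{a,h})=+\infty$ iff $\lambda'_{\rm ess}(a,h)=+\infty$.

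\emph{Main obstacle.} The subtle point is observation (iii): confirming that the constant $C_N$ produced by Theorem \ref{teo semi} when applied to the perturbed symbol $a+b$ depends only on $N,d,\varphi$ and the class constants of $a$, and not on $\Lambda$ or $h$. This requires inspecting the dependence in its proof of the constants $C'_N,C''_N,C'''''_N$ on the symbol and verifying that each of them arises from a Taylor remainder at order $M+1$, hence only through the $(M+1)$-th derivative bound; lower-order derivatives of $b$ (which may be large in $\Lambda$) do not enter the final constant, and the scaling $R\gtrsim\Lambda^{1/(M+1)}$ exactly cancels the $\Lambda$-prefactor in $|\partial^{M+1}b|$.
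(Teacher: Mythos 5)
Your treatment of the upper bound coincides with the paper's. For the lower bound you take a genuinely different route, and this is worth comparing. The paper's proof of the first inequality of \eqref{eq 26una} does \emph{not} apply Theorem \ref{teo semi} (or its method) to $a+b$ in one stroke: it uses the singular perturbation $b(x,\omega,h)=C(1+|x|+|\omega|)^{-1}$, keeps $b$ \emph{outside} the Taylor approximation (so that the weight used in Lemma \ref{mainlemma} is $|P_{(x_0,\omega_0),M}|+b(h^{1/2}\cdot,h^{1/2}\cdot,h)$), exploits that $A_\infty(\bR^{2d})$ is closed under sums, and needs Peetre's inequality to convert the averages of $b$ in \eqref{eq ult2} into pointwise suprema. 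Your design instead makes $b$ a compactly supported smooth bump whose top-order derivative is tamed by the scaling $R\gtrsim\Lambda^{1/(M+1)}$, so that $b$ can simply be \emph{included in} the Taylor expansion of $a+b$; then the argument of Theorem \ref{teo semi} applies with no structural modification, and one only needs to check that its constants depend on the order-$(M+1)$ derivative bound and on the (scale-invariant) $A_\infty$ bound of polynomials of degree $\leq M$. That check is correct, and your construction eliminates the Peetre step, so the proof reads more cleanly. What the paper's choice buys in exchange is that no hidden constraint enters through the size of $\partial^{M+1}b$; yours introduces one, and there is where a small gap sits.

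The gap is in observation (iii). What enters the Taylor remainder \eqref{eq ueqap} is the \emph{rescaled} derivative $\partial^\alpha_x\partial^\beta_\omega\bigl(b(h^{1/2}x,h^{1/2}\omega)\bigr)$ at order $|\alpha|+|\beta|=M+1$, not the physical one. This equals $h^{(M+1)/2}\Lambda R^{-(M+1)}(\partial^\alpha\partial^\beta\chi)(h^{1/2}x/R,h^{1/2}\omega/R)$, which your choice of $R$ bounds by $h^{(M+1)/2}\sup|\partial^{M+1}\chi|$, uniformly in $\Lambda$. But to merge this with the $a$-remainder, which is $O(h^N)$, you need $(M+1)/2\geq N$. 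The paper's choice of $M=M_N$ (the minimal $M$ with $M+1\geq N_0$ and $-m+(1/2-\rho)(M+1)\geq N$) guarantees $(M+1)/2\geq(1/2-\rho)(M+1)\geq N+m$, which gives $(M+1)/2\geq N$ only if $m\geq 0$; for $m<0$ the minimal $M_N$ can fail this, and then the $b$-part of the remainder is only $O(h^{(M+1)/2})$, not $O(h^N)$. The fix is immediate: either note that $S^m_\rho\subset S^{\max(m,0)}_\rho$ for $h\in(0,1]$ and work with the larger exponent, or simply enlarge $M_N$ so that additionally $(M_N+1)/2\geq N$. With this one-line amendment your lower bound argument closes, and the deduction of discreteness from the two-sided bound is then the same as in the paper.
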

\begin{proof}
The proof is similar to that of Corollary \ref{maincor}, so that we only focus on the needed modifications. 

Let us prove the first inequality in \eqref{eq 26una} for all $h\in(0,1]$ such that $\boldsymbol{\lambda}'_{\rm ess}(a,h)<+\infty$. An easy modification of the argument below will show that the estimate holds for the $h$'s such that $\boldsymbol{\lambda}'_{\rm ess}(a,h)=+\infty$ as well. 

Similarly to the proof of Corollary \ref{maincor} one considers 
$R>0$ (depending on $a$ and $h$) such that 
\[
\sup_{(x,\omega)\in B((x_0,\omega_0),{h}^{1/2})} a(x,\omega,h)\geq \frac{1}{2}\boldsymbol{\lambda}'_{\rm ess}(a,h)
\]
for $|x_0|^2+|\omega_0|^2>R^2$, and threrefore the symbol $b(x,\omega,h):=C(1+|x|+|\omega|)^{-1}$ where the constant $C>0$ is chosen (depending on $a$ and $h$) so that 
\[ 
b(x,\omega,h)\geq  \frac{1}{2}\boldsymbol{\lambda}'_{\rm ess}(a,h)\qquad{\rm for}\ |x|^2+|\omega|^2\leq (R+{h}^{1/2})^2.  
\]
Hence 
\[
\boldsymbol{\lambda}'(a+b,h)\geq \frac{1}{2}\boldsymbol{\lambda}'_{\rm ess}(a,h),
\]
where the function $\boldsymbol{\lambda}'(\cdot,\cdot)$ was defined in \eqref{eq lambdaprime}.
Hence, the first inequality in \eqref{eq 26una} will follow as in the proof of Corollary \ref{maincor} if we prove the lower bound
\begin{equation}\label{eq 24}
Q_{a+b,h}(f)\geq C_N^{-1}\boldsymbol{\lambda}'(a+b,h)-C_N h^N,
\end{equation}
for all $f\in \mathcal{D}_{a+b,h}=\mathcal{D}_{a,h}$ with $\|f\|_{L^2}=1$, and $h\in(0,1]$.

This can be obtained by following a pattern similar to the proof of Theorem \ref{teo semi}. Namely, one still considers the Taylor polynomial $P_{(x_0,\omega_0),M}(x,\omega,h)$ of $a(h^{1/2}x,h^{1/2}\omega,h)$, centered at $(x_0,\omega_0)$ and of degree $M=M_N$, where $M_N$ is defined as at the beginning of the proof of Theorem \ref{teo semi}. As in that proof, we define the quantity $\Gamma_{\varepsilon,r,N}$ and the weight $w_{\varepsilon,r,N,x_0,\omega_0}$ but with the function $|P_{(x_0,\omega_0),M}(x,\omega,h)|$ replaced by $
|P_{(x_0,\omega_0),M}(x,\omega,h)|+b(h^{1/2}x,h^{1/2}\omega,h)$. The key point is that this function still belongs to $A^\alpha_\infty(\bR^{2d})$ for a constant $\alpha\in(0,1)$ depending only on $M=M_N$, i.e. on $N$, by the closure of this class with respect to sums, scaling and multiplication by a positive constant (the above constant $C$ appearing in the expression of $b$, in this case). 

Hence, now
\begin{align*}
&\Gamma_{\varepsilon,r,N}:=\varepsilon\\
&+\inf_{(x_0,\omega_0,t_0)\in\mathbb{H}^d}\fint_{B((x_0,\omega_0,t_0),r)} \big(|P_{(x_0,\omega_0),M}(x,\omega,h)|+b(h^{1/2}x,h^{1/2}\omega,h)\big)\, dx\, d\omega\, dt
\end{align*}
and 
\[
w_{\varepsilon,r,N,x_0,\omega_0}(x,\omega,t):= r^{-2} \Gamma_{\varepsilon,r,N}^{-1} \big(\varepsilon+|P_{(x_0,\omega_0),M}(x,\omega,h)|+b(h^{1/2}x,h^{1/2}\omega,h)\big).
\] 
With these adjustments one can follow the same argument as in the proof of Theorem \ref{teo semi}. 
Only the proof of the inequality analogous to \eqref{eq ult}, namely
\begin{equation}\label{eq ult2}
\Gamma_{\varepsilon,r,N}\geq c_{r,N}(\boldsymbol{\lambda}'(a+b,h)-C''_N h^N), \quad r\in (0,1],\ h\in(0,1].
\end{equation} 
 requires some comments. In this case, we observe that, by Lemma \ref{lem equiv} and Corollary \ref{pro loc doubl}, 
\begin{align*}
&\fint_{B((x_0,\omega_0,t_0),r)} \big(|P_{(x_0,\omega_0),M}(x,\omega,h)|+b(h^{1/2}x,h^{1/2}\omega,h)\big)\, dx\, d\omega\, dt\\
&\geq c_{r,N}\fint_{B((x_0,\omega_0),r)} \big(|P_{(x_0,\omega_0),M}(x,\omega,h)|+b(h^{1/2}x,h^{1/2}\omega,h)\big)\, dx\, d\omega\\
&\geq c'_{r,N}\fint_{B((x_0,\omega_0),1)} |P_{(x_0,\omega_0),M}(x,\omega,h)|\,dx\, d\omega\\
&\qquad\qquad\qquad\qquad\qquad\qquad+c'_{r,N}\fint_{B((x_0,\omega_0),1)} b(h^{1/2}x,h^{1/2}\omega,h)\, dx\, d\omega.
\end{align*}
The first integral can be estimated from below as in the proof of \eqref{eq ult}. For the second integral we observe that, for some constant $c_d$ depending only on $d$,
\[
\fint_{B((x_0,\omega_0),1)} b(h^{1/2}x,h^{1/2}\omega,h)\, dx\, d\omega\geq c_d \sup_{(x,\omega)\in B((x_0,\omega_0),1)} b(h^{1/2}x,h^{1/2}\omega,h),
\]
as one sees from the expression of $b$, using Peetre's inequality: for $z,w\in B((x_0,\omega_0),1)$,
\[
(1+|h^{1/2}z|)^{-1}\geq c'_d (1+|h^{1/2}w|)^{-1} (1+h^{1/2}|z-w|)^{-1}\geq c''_d (1+|h^{1/2}w|)^{-1}. 
\]
Hence one arrives at 
\begin{align*}
&\fint_{B((x_0,\omega_0,t_0),r)} \big(|P_{(x_0,\omega_0),M}(x,\omega,h)|+b(h^{1/2}x,h^{1/2}\omega,h)\big)\, dx\, d\omega\, dt\\
 &\geq c'''_{r,N}\Big(\sup_{(x,\omega)\in B((x_0,\omega_0),1)} (a(h^{1/2}x,h^{1/2}\omega,h)-C''_N h^N)\\&\qquad\qquad\qquad\qquad\qquad\qquad\qquad+\sup_{(x,\omega)\in B((x_0,\omega_0),1)} b(h^{1/2}x,h^{1/2}\omega,h)\Big)\\
 &\geq c'''_{r,N} \Big(\sup_{(x,\omega)\in B((x_0,\omega_0),1)} \big(a(h^{1/2}x,h^{1/2}\omega,h)+b(h^{1/2}x,h^{1/2}\omega,h)\big)-C''_N h^N\Big).
 \end{align*}
Taking the infimum over $(x_0,\omega_0,t_0)\in\mathbb{H}^d$ yields the inequality in \eqref{eq ult2} and therefore concludes the proof of the lower bound \eqref{eq 24}.
 
The second inequality in \eqref{eq 26una} follows from \eqref{eq confron}. 
\end{proof}
\begin{remark}
The lower bound in \eqref{eq ulower} is similar to that in \cite{fefferman_phong}, cf. \eqref{eq feff}, but there are also some obvious differences. Indeed, the privileged scale $h^{1/2}$ is implicit in the definition \eqref{eq anti-w} of the anti-Wick quantization (because of the coherent states $\varphi^h_{(x,\omega)}$), so that only Euclidean balls at that scale enter in the formula for $\boldsymbol{\lambda}'(a,h)$ in \eqref{eq lambdaprime}. Moreover, the better properties of the Anti-Wick quantization (compared with the Weyl quantization) with respect to positivity issues allow us to deal with symbols of arbitrary order. 
\end{remark}
\section*{Appendix}
In this appendix we comment on a characterization, recently proved in \cite{bruno_calzi}, of Schr\"odinger operators with purely discrete spectrum associated with a left-invariant subLaplacian on a noncompact connected Lie group, with a potential in $A_{\infty,{\rm loc}}$.
  
Let $G$ be a noncompact connected Lie group. Consider a left-invariant subRiemannian structure induced by the linearly independent vector fields $X_1,\ldots,X_\ell$ satisfying H\"ormander's condition. Let ${\rm d}_C$ be the corresponding Carnot–Carath\'eodory distance, as in Section \ref{sec prel}.  Let $\lambda$ be a left Haar measure of $G$ and $V\in A_{\infty,{\rm loc}}(G,{\rm d}_C,\lambda)$ (cf. Definition \ref{def muck}). Consider the vector space 
\[
\mathcal{D}'_V:=\{u\in L^2(G,\lambda): |\nabla u|\in L^2(G,\lambda),\ \sqrt{V}u\in L^2(G,\lambda)\}
\] 
where $\nabla u=(X_1u,...,X_\ell u)$ is the horizontal gradient, understood in the sense of distributions. Then 
\[
Q'_V(u):= \int_{G} (|\nabla u|^2+V|u|^2) d\lambda,\qquad u\in\mathcal{D}'_V
\]
is a densely defined nonnegative closed quadratic form, and therefore defines a self-adjoint operator on $L^2(G,\lambda)$ that we will denote by $\mathcal{L}_V$ (cf. \cite{bruno_calzi}). 

Now, it was proved in \cite[Theorem 6.6]{bruno_calzi} that if $V\in A_{\infty,{\rm loc}}(G,{\rm d}_C,\lambda)$ and if the absolutely continuous measure that has density $V$ with respect to $\lambda$ is locally doubling, in the sense that there exist $C, R'>0$ such that
\[
\int_{2B} V\, d\lambda\leq C \int_{B} V\, d\lambda
\]
for every ball $B$ of radius $r\in (0,R']$, then $\mathcal{L}_V$ has purely discrete spectrum if and only if 
\begin{equation}\label{eq bru}
\lim_{x\to\infty}\int_{B(x,R)}V\, d\lambda=+\infty
\end{equation}
for some (equivalently for all) $R>0$. Hence, combining this result with Corollary \ref{pro loc doubl}  we obtain the following characterization.
\begin{theorem}
Let $V\in A_{\infty,{\rm loc}}(G,{\rm d}_C,\lambda)$. The operator $\mathcal{L}_V$ has purely discrete spectrum if and only if \eqref{eq bru} holds for some (equivalently for all) $R>0$.  
\end{theorem}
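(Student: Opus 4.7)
The plan is to derive the theorem by combining two ingredients that have already been assembled in the paper: the characterization of discrete spectrum for $\mathcal{L}_V$ proved by Bruno and Calzi in \cite[Theorem 6.6]{bruno_calzi}, and the local doubling property for $A_{\infty,{\rm loc}}$ weights established in Corollary \ref{pro loc doubl}. The Bruno--Calzi characterization states that, whenever $V\in A_{\infty,{\rm loc}}(G,{\rm d}_C,\lambda)$ and the measure $V\,d\lambda$ is locally doubling, then $\mathcal{L}_V$ has purely discrete spectrum if and only if \eqref{eq bru} holds for some (equivalently, every) $R>0$. Thus the only missing piece is the verification of the local doubling hypothesis on $V\,d\lambda$.

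First, I would invoke Corollary \ref{pro loc doubl} with the choice $\mu_\chi=\lambda$ (which corresponds to taking $\chi$ equal to the modular function, so that $\mu_\chi$ is the left Haar measure). That corollary provides, for each $R'>0$ and each $\alpha\in(0,1)$, a constant $C=C(\alpha,R')$ such that
\[
\int_{2B} V\,d\lambda \leq C \int_{B} V\,d\lambda
\]
for all balls $B$ of radius $r\in(0,R']$, as soon as $V\in A_{\infty,{\rm loc}}^{\alpha}(G,{\rm d}_C,\lambda)$. Since by definition $A_{\infty,{\rm loc}}(G,{\rm d}_C,\lambda)$ is the union of the classes $A_{\infty,{\rm loc}}^{\alpha}$ over $\alpha\in(0,1)$, any $V$ in the local Muckenhoupt class lies in some $A_{\infty,{\rm loc}}^{\alpha}$, and therefore the measure $V\,d\lambda$ is automatically locally doubling in the sense required by Bruno--Calzi.

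With the local doubling property for free, the hypothesis of \cite[Theorem 6.6]{bruno_calzi} is satisfied with no further assumption, and the desired equivalence between the purely discrete spectrum of $\mathcal{L}_V$ and condition \eqref{eq bru} follows at once. The equivalence between ``for some $R>0$'' and ``for every $R>0$'' in \eqref{eq bru} is already contained in the statement of the Bruno--Calzi theorem, so nothing extra has to be proved on that side.

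There is no genuine technical obstacle here: the whole content of the theorem lies in removing the local doubling assumption on $V\,d\lambda$ that was explicitly imposed in \cite{bruno_calzi}, and this removal is performed exactly by Corollary \ref{pro loc doubl}. The only subtlety worth double-checking is that Corollary \ref{pro loc doubl}, which is stated for general relatively invariant measures $\mu_\chi$, does apply in the particular case $\mu_\chi=\lambda$ (the left Haar measure, obtained by choosing $\chi=\Delta$), and that the notion of $A_{\infty,{\rm loc}}$ class employed in \cite{bruno_calzi} coincides with Definition \ref{def muck} used here --- both points are already addressed in the paper, the latter via Proposition \ref{pro equivalenze}.
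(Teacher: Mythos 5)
Your proposal is correct and follows exactly the same route as the paper: the Appendix proves this theorem precisely by combining Bruno--Calzi's \cite[Theorem 6.6]{bruno_calzi} with Corollary \ref{pro loc doubl} applied to $\mu_\chi=\lambda$ (i.e.\ $\chi=\Delta$), which removes the additional local doubling hypothesis on $V\,d\lambda$. Nothing is missing.
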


\section*{Acknowledgments} We would like to thank Tommaso Bruno, Mattia Calzi, Gian Maria Dall'Ara and Eugenia Malinnikova for interesting comments on a preliminary version of the manuscript. We also thank the anonymous Referee for valuable comments. 

The author is a Fellow of the \textit{Accademia delle Scienze di Torino}.  


\end{document}